\documentclass[reqno, 11pt]{amsart}
\usepackage{amsmath, amsthm, amscd, amsfonts, amssymb, graphicx, color, mathrsfs}
\usepackage{extarrows}
\usepackage[bookmarksnumbered, colorlinks, plainpages]{hyperref}
\hypersetup{colorlinks=true,linkcolor=red, anchorcolor=green,
	citecolor=cyan, urlcolor=red, filecolor=magenta, pdftoolbar=true}

\newtheorem{theorem}{Theorem}[section]
\newtheorem{lemma}[theorem]{Lemma}
\newtheorem{proposition}[theorem]{Proposition}
\newtheorem{corollary}[theorem]{Corollary}
\theoremstyle{definition}
\newtheorem{definition}[theorem]{Definition}
\newtheorem{example}[theorem]{Example}

\theoremstyle{remark}
\newtheorem{remark}[theorem]{Remark}
\numberwithin{equation}{section}

\begin{document}
	
	\setcounter{page}{1}
	
	\title[Power mean transforms of operators]
	{Power mean transforms of operators}

	\author[J.B. Zhou]{Jing-Bin Zhou}
	\address{Jing-Bin Zhou, School of Mathematics,
		Shanghai University of Finance and Economics,
		777 Gouding Road, Shanghai 200433, P. R. China}
	\email{zhoujingbin@stu.sufe.edu.cn}
	
	\author[S. Yang]{Shihai Yang}
	\address{Shihai Yang, School of Mathematics,
		Shanghai University of Finance and Economics,
		777 Gouding Road, Shanghai 200433, P. R. China}
	\email{yang.shihai@mail.shufe.edu.cn}

	\subjclass[2020]{47A05, 47A12, 47B49, 47A30, 47B20.}
	
	\keywords{Aluthge transform, mean transform, numerical radius, binormal, preserver problems}
	\begin{abstract}
		In this paper, we introduce the power mean transform $P_{\lambda}(T)$ of an operator $T$ on a Hilbert space, which is a convex combination of some classical operator transforms such as the mean transform $M(T)$, the Aluthge transform $\Delta(T)$, and the Duggal transform $T^D$. In particular, when 
		$T$ is invertible, this transform coincides with the induced Aluthge transform $\Delta_{\mathsf{m}_{f}}(T)$ recently defined by Yamazaki \cite{yamazaki-laa-2021} with $f(x)=(\lambda+(1-\lambda)\sqrt{x})^2$ for $x\in(0,\infty)$ and $\lambda\in(0,1)$. We study basic properties of $P_{\lambda}(T)$ including its spectrum, norm and numerical radius. Moreover, we use the power mean transform to give new characterizations of normal, quasinormal and binormal operators. The questions of Golla et al. \cite{yamazaki-laa-2023} and some new results on the Duggal transform are also mentioned. We obtain a result close to the recent one of Osaka and Yamazaki \cite[Theorem 3.3]{yamazaki-tams-2025} on the iteration of the induced Aluthge transform for centered operators. Finally, we describe the form of bijective maps commuting with the power mean transform of the product of matrices.
	\end{abstract}
	\maketitle
	\section{Introduction}
	Let $\mathbb{B}(\mathcal{H})$ denote the $C^\ast$-algebra of all bounded linear operators on the Hilbert space $\mathcal{H}$ with the inner product $\langle\cdot,\cdot\rangle$. Then the norm of $x\in\mathcal{H}$ is given by $\|x\|=\langle x,x\rangle^{\frac{1}{2}}$. For an operator $T\in\mathbb{B}(\mathcal{H})$, denote its image space, null space, adjoint, and norm by $\mathcal{R}(T)$, $\mathcal{N}(T)$, $T^\ast$ and $\|T\|$, respectively. If $\langle Tx,x\rangle\ge0$ for all $x\in\mathcal{H}$, then $T$ is called a positive operator, and we write its square root as $T^\frac{1}{2}$. It is well known that every operator $T\in\mathbb{B}(\mathcal{H})$ admits a unique polar decomposition $T=U|T|$, where $U$ is  a partial isometry with $\mathcal{N}(U)=\mathcal{N}(T)$ and  $|T|=(T^\ast T)^\frac{1}{2}$. Now, let $T=U|T|$ be the polar decomposition. In \cite{Foias-PJM-2003}, Foias et al. introduced the {\em Duggal transform} as $T^D=|T|U$ and also examined the {\em Aluthge transform} $\Delta(T)=|T|^\frac{1}{2}U|T|^\frac{1}{2}$ (which was first studied in \cite{Aluthge-IEOT-1990}). The latter was generalized in \cite{Cho-Tanahashi-SMJ-2002} by Ch\={o} and Tanahashi as $\Delta_{\lambda}(T)=|T|^{\lambda}U|T|^{1-\lambda}$ for $\lambda\in[0,1]$. In particular, $\Delta_{0}(T)=U|T|=T$ and $\Delta_{1}(T)=|T|U=T^D$. For the sequel, let $l^2$ denote the Hilbert space of square summable sequences with standard orthonormal basis $\{e_{n}\}_{n=0}^\infty$. Let $\alpha\equiv\{\alpha_{n}\}_{n=0}^\infty$ be a bounded sequence of positive numbers (called weights). The unilateral weighted shift $W_{\alpha}$ on $l^2$ is defined by $W_{\alpha}e_{n}=\alpha_{n}e_{n+1}$. This operator is also denoted by $\operatorname{shift}(\alpha_{0},\alpha_{1},\alpha_{2},\cdots)$, a notation that directly indicates the weights \cite[Problem 89]{Halmos-GTM-1982}. The polar decomposition of $W_{\alpha}$ is $W_{\alpha}=U|W_{\alpha}|$, where $U=\operatorname{shift}(1,1,1,\cdots)$ and $|W_{\alpha}|e_{n}=\alpha_{n}e_{n}$. In \cite{Lee-JMAA-2014}, Lee et al. mentioned that $\Delta(W_{\alpha})=\operatorname{shift}(\sqrt{\alpha_{0}\alpha_{1}},\sqrt{\alpha_{1}\alpha_{2}},\cdots)$, and noted that the convex combination of $\Delta_{0}(W_{\alpha})$ and $\Delta_{1}(W_{\alpha})$ yields $$\frac{1}{2}\Delta_{0}(W_{\alpha})+\frac{1}{2}\Delta_{1}(W_{\alpha})=\frac{1}{2}W_{\alpha}+\frac{1}{2}W_{\alpha}^D=\operatorname{shift}(\frac{\alpha_{0}+\alpha_{1}}{2},\frac{\alpha_{1}+\alpha_{2}}{2},\cdots).$$ Here, the entries $\sqrt{\alpha_{n}\alpha_{n+1}}$ and $\frac{\alpha_{n}+\alpha_{n+1}}{2}$ are the geometric and arithmetic means of the consecutive weights $\alpha_{n}$ and $\alpha_{n+1}$ for $n\ge0$, respectively. Following this observation, the authors \cite{Lee-JMAA-2014} defined the {\em mean transform} for a general operator $T\in\mathcal{B}(H)$ as the convex combination
	\begin{equation}\label{mean}
	M(T)\equiv\frac{1}{2}\Delta_{0}(T)+\frac{1}{2}\Delta_{1}(T)=\frac{1}{2}T+\frac{1}{2}T^D.
	\end{equation}  Benhida et al. \cite{Benhida-BJMA-2020} extended this transform to
	\begin{equation}\label{heinz1}
		H_{\lambda}(T)\equiv\frac{1}{2}\Delta_{\lambda}(T)+\frac{1}{2}\Delta_{1-\lambda}(T)
	\end{equation}
	for $\lambda\in[0,1]$, which Stanković recently called the {\em Heinz transform} of $T$ \cite{Stankovic-ZAA-2024}. We see that
	\begin{equation}\label{heniz}
		H_{\lambda}(W_{\alpha})=\operatorname{shift}(\frac{\alpha_{0}^{1-\lambda}\alpha_{1}^{\lambda}+\alpha_{0}^\lambda\alpha_{1}^{1-\lambda}}{2},\frac{\alpha_{1}^{1-\lambda}\alpha_{2}^{\lambda}+\alpha_{1}^\lambda\alpha_{2}^{1-\lambda}}{2},\cdots),
	\end{equation}
	where $\frac{\alpha_{n}^{1-\lambda}\alpha_{n+1}^{\lambda}+\alpha_{n}^\lambda\alpha_{n+1}^{1-\lambda}}{2}$ is the Heinz mean of the consecutive weights $\alpha_{n}$ and $\alpha_{n+1}$ for $n\ge0$. Indeed, a direct computation via the polar decomposition of $W_{\alpha}$ yields $\Delta_{\lambda}(W_{\alpha})e_{n}=\alpha_{n}^{1-\lambda}\alpha_{n+1}^{\lambda}e_{n+1}$, i.e., $\Delta_{\lambda}(W_{\alpha})=\operatorname{shift}(\alpha_{0}^{1-\lambda}\alpha_{1}^\lambda,\alpha_{1}^{1-\lambda}\alpha_{2}^\lambda,\cdots)$. Consequently, \eqref{heniz} holds by the definition of $H_{\lambda}(W_{\alpha})$. On the other hand, the convex combination of $M(W_{\alpha})$ and $\Delta(W_{\alpha})$ yields
	$$P(W_{\alpha})\equiv\frac{1}{2}M(W_{\alpha})+\frac{1}{2}\Delta(W_{\alpha})=\operatorname{shift}(\frac{1}{2}\sqrt{\alpha_{0}\alpha_{1}}+\frac{1}{2}\frac{\alpha_{0}+\alpha_{1}}{2},\frac{1}{2}\sqrt{\alpha_{1}\alpha_{2}}+\frac{1}{2}\frac{\alpha_{1}+\alpha_{2}}{2},\cdots),$$
	where $\frac{1}{2}\sqrt{\alpha_{n}\alpha_{n+1}}+\frac{1}{2}\frac{\alpha_{n}+\alpha_{n+1}}{2}=(\frac{1}{2}\sqrt{\alpha_{n}}+\frac{1}{2}\sqrt{\alpha_{n+1}})^2$ is the Heron mean, and also the power mean, of $\alpha_{n}$ and $\alpha_{n+1}$ \cite{Ito-JMI-2020}. In view of these results, we call the operator
	\begin{equation*}
P(T)\equiv\frac{1}{2}M(T)+\frac{1}{2}\Delta(T)
	\end{equation*}
	 the {\em Heron mean transform} of $T\in\mathbb{B}(\mathcal{H})$. Moreover, recently, \eqref{mean} and \eqref{heinz1} were extended in \cite{zamani-jmaa-2021} and \cite{feki-rm-2025} to the more general operator transforms $$M_{\lambda}(T)\equiv\lambda\Delta_{0}(T)+(1-\lambda)\Delta_{1}(T)=\lambda T+(1-\lambda)T^D$$and$$H_{\lambda_{1},\lambda_{2}}(T)=\lambda_{2}\Delta_{\lambda_{1}}(T)+(1-\lambda_{2})\Delta_{1-\lambda_{1}}(T),$$where $\lambda,\lambda_{1},\lambda_{2}\in[0,1]$. The weights of the weighted shifts $M_{\lambda}(W_{\alpha})$ and $H_{\lambda_{1},\lambda_{2}}(W_{\alpha})$ are, respectively, the weighted arithmetic mean and the weighted Heinz mean of $\alpha_{n}$ and $\alpha_{n+1}$. Furthermore, we note that the following convex combination of $M_{\frac{(1-\lambda)^2}{(1-\lambda)^2+\lambda^2}}(W_{\alpha})$ and $\Delta(W_{\alpha})$ gives
	 \begin{equation}\label{uni}
	 \begin{aligned}
 P_{\lambda}(W_{\alpha})&\equiv(1-2\lambda+2\lambda^2)M_{\frac{(1-\lambda)^2}{(1-\lambda)^2+\lambda^2}}(W_{\alpha})+(2\lambda-2\lambda^2)\Delta(W_{\alpha})\\&=(1-\lambda)^2W_{\alpha}+\lambda^2W_{\alpha}^D+2\lambda(1-\lambda)\Delta(W_{\alpha})\\&=\operatorname{shift}\Big(((1-\lambda)\sqrt{\alpha_{0}}+\lambda\sqrt{\alpha_{1}})^2,((1-\lambda)\sqrt{\alpha_{1}}+\lambda\sqrt{\alpha_{2}})^2,\cdots\Big)
	 \end{aligned}
	 \end{equation}
	 for $\lambda\in[0,1]$, where $((1-\lambda)\sqrt{\alpha_{n}}+\lambda\sqrt{\alpha_{n+1}})^2$ is the weighted power mean of $\alpha_{n}$ and $\alpha_{n+1}$. Inspired by these facts, we have the following. \begin{definition}
	  Let $\lambda\in[0,1]$. The {\em power mean transform} $P_{\lambda}(T)$ of $T\in\mathbb{B}(\mathcal{H})$ is defined as $$P_{\lambda}(T)=(1-\lambda)^2 T+\lambda^2 T^{D}+2\lambda(1-\lambda)\Delta(T).$$
	 	In particular, $P_{0}(T)=T$, $P_{1}(T)=T^D$ and $P_{\frac{1}{2}}(T)=P(T)$.
	 \end{definition}
	 It should be mentioned that Yamazaki introduced the operator transformation below, which is closely related to the power mean transform.
	
	\begin{definition}\cite{yamazaki-laa-2021, yamazaki-tams-2025}
	Let $T=U|T|\in\mathbb{B}(\mathcal{H})$ with the spectral decomposition $|T|=\int_{\sigma(T)}sdE_{s}$. For an operator mean $\mathsf{m}_{f}$ with a representing function $f$, the {\em induced Aluthge transformation} $\Delta_{\mathsf{m}_{f}}(T)$ of $T$ is definde as follows.
	\begin{itemize}
		\item[(i)] If $|T|$ is invertible, then $$\Delta_{\mathsf{m}_{f}}(T)=\int_{\sigma(|T|)}\int_{\sigma(|T|)}sf\Big(\frac{t}{s}\Big)dE_{s}UdE_{t}.$$
		\item[(ii)] If $|T|$ is not invertible, and if there exists an isometry $V$ such that $T_{\varepsilon}=V(|T|+\varepsilon I)$ is the polar decomposition for all $\varepsilon>0$ and $\lim\limits_{\varepsilon\searrow0}T_{\varepsilon}=T$, then $$\Delta_{\mathsf{m}_{f}}(T)=\lim\limits_{\varepsilon\searrow0}\Delta_{\mathsf{m}_{f}}(T_{\varepsilon}),$$where the "$\lim$" means the strong operator topology. 
	\end{itemize}
	\end{definition}
	For $\lambda\in[0,1]$, define $f(x)=(\lambda+(1-\lambda)\sqrt{x})^2$ on $(0,\infty)$. Then for invertible $T\in\mathbb{B}(\mathcal{H})$, the induced Aluthge transform $\Delta_{\mathsf{m}_{f}}(T)$ coincides with the power mean transform $P_{\lambda}(T)$ (see \cite[Example 1.2 (iii)]{yamazaki-tams-2025} and \cite[Example 1.8]{yamazaki-laa-2023}). However, for non‑invertible $T$, the identity 
	$P_{\lambda}(T)=\Delta_{\mathsf{m}_{f}}(T)$ does not hold in general. Indeed, consider the matrix $T= \begin{bmatrix}
		0&1\\
		0&0
	\end{bmatrix}$. It has the unique polar decomposition $T=U|T|$ with $U=\begin{bmatrix}
		0&1\\
		0&0
	\end{bmatrix}$ and $|T|=\begin{bmatrix}
		0&0\\
		0&1
	\end{bmatrix}$. By the equivalence of the norm topology and the strong operator topology in finite-dimensional spaces, one can easily verify that $T_{\varepsilon}=\begin{bmatrix}
	0&1+\varepsilon\\
	\varepsilon&0
	\end{bmatrix}=V(|T|+\varepsilon I)$ is the polar decomposition for all $\varepsilon>0$ and $\lim\limits_{\varepsilon\searrow 0} T_{\varepsilon}=T$, where $V=\begin{bmatrix}
	0&1\\
	1&0
	\end{bmatrix}$. A direct computation shows that
	
\begin{align*}
\Delta_{\mathsf{m}_{f}}(T)=\lim\limits_{\varepsilon\searrow0}\Delta_{\mathsf{m}_{f}}(T_{\varepsilon})=\lim\limits_{\varepsilon\searrow0}P_{\lambda}(T_{\varepsilon})&=\lim\limits_{\varepsilon\searrow0}\begin{bmatrix}
	0&((1-\lambda)\sqrt{1+\varepsilon}+\lambda\sqrt{\varepsilon})^2\\((1-\lambda)\sqrt{\varepsilon}+\lambda\sqrt{1+\varepsilon})^2&0
\end{bmatrix}\\&=\begin{bmatrix}
0&(1-\lambda)^2\\\lambda^2&0
\end{bmatrix}\ne\begin{bmatrix}
0&(1-\lambda)^2\\0&0
\end{bmatrix}=P_{\lambda}(T).
\end{align*}
Hence, for any invertible $T\in\mathbb{B}(\mathcal{H})$ and $\lambda\in[0,1]$, our results hold for $\Delta_{\mathsf{m}_{f}}(T)$ with $f(x)=(\lambda+(1-\lambda)\sqrt{x})^2$ on $(0,\infty)$.

Let $\sigma(T)$ denote the spectrum of $T$, and let $W(T)$ be its numerical range, i.e., $W(T)=\{\langle Tx,x\rangle:\|x\|=1\}$. The {\em spectral radius} $r(T)$ and the {\em numerical radius} $\omega(T)$ of $T$ are defined by $r(T)=\sup\{|\lambda|:\lambda\in\sigma(T)\}$ and $\omega(T)=\sup\{|\lambda|:\lambda\in W(T)\}$, respectively. It is well known that map $\omega(\cdot):T\mapsto\omega(T)$ is a norm on $\mathbb{B}(\mathcal{H})$. These quantities satisfy the inequality
\begin{equation}\label{spnuno}
	r(T)\le\omega(T)\le\|T\|.
\end{equation}

In Section 2, we characterize the invertibility of $T$ by the power mean transform, discuss the relation between $\sigma(T)$ and $\sigma(P_{\lambda}(T))$, and estimate $\|P_{\lambda}(T)\|$ via Heinz's inequality. As corollaries, we improve the Cauchy-Schwarz inequality and give numerical range conditions for certain equalities. We also extend a recent result of Feki and Yamazaki \cite[Theorem 2]{feki-mia-2021} on the numerical radius.

For $T,S\in\mathbb{B}(\mathcal{H})$, the commutator is defined by $[T,S]=TS-ST$. An operator $T\in\mathbb{B}(\mathcal{H})$ is called {\em normal} if \([T,T^*]=0\); {\em quasinormal} if \([T,T^*T]=0\); {\em centered} if \([T^n(T^n)^*, (T^m)^*T^m]=0\) for $n,m\in\{1,2,\cdots\}$; and {\em binormal} if \([T^*T,TT^*]=0\).  
Let $T=U|T|$be the polar decomposition. It is well known that \(T\) is quasinormal if and only if \([U,|T|]=0\) \cite[Lemma 2.1]{curto-aof-2019}.  
Campbell \cite{cam-pams-1972} proved that \(T\) is binormal if and only if \([|T|, U|T|U^*]=0\). Ito et al. \cite[Theorem 4.1]{Ito-JOT-2004} showed that \(T\) is centered if and only if \([U^n|T|(U^n)^*, |T|]=0\) for $n=1,2,\cdots$.  
Consequently, we have the inclusions  
\[
\text{normal} \subseteq \text{quasinormal} \subseteq \text{centered} \subseteq \text{binormal}.
\]  
Using the polar decomposition of weighted shifts, one readily sees that every weighted shift is central.

In Section 3, we use the power mean transform to give new characterizations of normal, quasinormal and binormal operators. In particular, we also obtain some new results concerning the Duggal transform. In addition, we mention the questions raised by Golla et al. \cite{yamazaki-laa-2023}. Moreover, we prove that the iteration of the power mean transform of a semi‑hyponormal centered operator converges to a quasinormal operator; this is close to a recent result of Osaka and Yamazaki \cite[Theorem 3.3]{yamazaki-tams-2025}.

The preserver problem for operator transforms has also attracted attention. In this direction, the first result was obtained by Botelho et al. in \cite{mol-blm-2016}, where they characterized the linear maps that commute with the $\lambda$-Aluthge transform. Later, Chabbabi \cite{chabbabi-jmaa-2017} considered the preserver problem for nonlinear maps that commute with the $\lambda$-Aluthge transform. Recently, Bourhim et al. have also worked on the unitary‑similarity preserver problem for operator transforms (see \cite{bour-lma-2026, bour-jmaa-2025, bour-lma-2025}).

In the last seciton, following the recent ideas of Bourhim et al. \cite{bour-lma-2026}, we describe the form of all bijective maps $\Phi$ on $\mathbb{B}(\mathcal{H})$ that satisfy
$$P_{\lambda}(\Phi(T)\Phi(S))=\Phi(P_{\lambda}(TS))\qquad\text{for all $T,S\in\mathbb{B}(\mathcal{H})$}$$ in the matrix case.
	\section{Basic properties of the power mean transform $P_{\lambda}(T)$}
	 We start this section with some notations and facts. Denote by $\overline{A}$ the closure of a subset $A$ in a metric space, by $P_{M}$ the orthogonal projection onto the closed subspace $M$, and by $N^\perp$ the orthogonal complement of the subspace $N$. Let $T=U|T|$ be the polar decomposition of $T$. Then we have$$\mathcal{N}(T)=\mathcal{N}(|T|)=\mathcal{N}(U)\quad\text{and}\quad\mathcal{N}(T^\ast)=\mathcal{N}(|T^\ast|)=\mathcal{N}(U^\ast)$$and$$\overline{\mathcal{R}(T^\ast)}=\overline{\mathcal{R}(|T|)}=\mathcal{R}(U^\ast)\quad\text{and}\quad\overline{\mathcal{R}(T)}=\overline{\mathcal{R}(|T^\ast|)}=\mathcal{R}(U).$$ Combining the above relations with $U^\ast U=P_{\overline{\mathcal{R}(T^\ast)}}$ and $UU^\ast=P_{\overline{\mathcal{R}(T)}}$ immediately yields 
	 \begin{equation}\label{polarbasic}
	 	U^\ast UT^\ast=T^\ast,\ U^\ast U|T|=|T|,\ U^\ast UU^\ast=U^\ast,\ UU^\ast T=T\ \text{and}\ UU^\ast|T^\ast|=|T^\ast|.
	 \end{equation}
	On the other hand, closely related to the unilateral weighted shift operator is the bilateral weighted shift operator. In what follows, let $\mathbb{R}$, $\mathbb{C}$, $\mathbb{N}$, and $\mathbb{Z}$ denote the sets of real numbers, complex numbers, non-negative integers and integers, respectively. Let $\alpha\equiv\{\alpha_{n}\}_{n\in\mathbb{Z}}$ be a bounded sequence of positive numbers. The bilateral weighted shift $W_{\alpha}$ on $l^2(\mathbb{Z})$ is defined by $W_{\alpha}e_{n}=\alpha_{n}e_{n+1}$ for $n\in\mathbb{Z}$, where $\{e_{n}\}_{n\in\mathbb{Z}}$ is the standard orthonormal basis. 
It is well known that $W_{\alpha}$ is invertible if and only if $\inf\limits_{n\in\mathbb{Z}}\alpha_{n}>0$. Furthermore, similar to the calculation in \eqref{uni}, we have
\begin{equation}\label{meanuni}
	P_{\lambda}(W_{\alpha})e_{n}=((1-\lambda)\sqrt{\alpha_{n}}+\lambda\sqrt{\alpha_{n+1}})^2e_{n+1}
\end{equation}
for $\lambda\in[0,1]$. Then we have the following.
	\begin{proposition}\label{inws}
		Let $\lambda\in[0,1]$. Let $\alpha\equiv\{\alpha_{n}\}_{n\in\mathbb{Z}}$ be a bounded sequence of positive numbers and $W_{\alpha}$ the bilateral weighted shift. If $W_{\alpha}$ is invertible, then so is $P_{\lambda}(W_{\alpha})$.
	\end{proposition}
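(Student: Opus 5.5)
The plan is to read off the weights of $P_{\lambda}(W_{\alpha})$ from \eqref{meanuni} and then apply the invertibility criterion for bilateral weighted shifts recalled just before the statement. By \eqref{meanuni}, $P_{\lambda}(W_{\alpha})$ is itself a bilateral weighted shift $W_{\beta}$ with weights
$$\beta_{n}=\big((1-\lambda)\sqrt{\alpha_{n}}+\lambda\sqrt{\alpha_{n+1}}\big)^2,\qquad n\in\mathbb{Z}.$$
So the task is to check that $\beta$ is a bounded sequence of positive numbers with $\inf_{n}\beta_{n}>0$, given that $\inf_{n}\alpha_{n}>0$. The criterion then yields invertibility of $P_{\lambda}(W_{\alpha})$.

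For boundedness, set $M=\sup_{n}\alpha_{n}$. Then $\beta_{n}\le((1-\lambda)\sqrt{M}+\lambda\sqrt{M})^2=M$.

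For the lower bound, set $c=\inf_{n}\alpha_{n}>0$, which holds because $W_{\alpha}$ is invertible. Since $1-\lambda$ and $\lambda$ are nonnegative and sum to $1$, each $\beta_{n}$ is the square of a convex combination of $\sqrt{\alpha_{n}}$ and $\sqrt{\alpha_{n+1}}$. Both of these are at least $\sqrt{c}$, so
$$\beta_{n}\ge\big((1-\lambda)\sqrt{c}+\lambda\sqrt{c}\big)^2=c>0.$$
Hence $\inf_{n}\beta_{n}\ge c>0$, and in particular every $\beta_{n}$ is positive.

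There is essentially no obstacle. The only step that needs care is \eqref{meanuni} itself, which holds for every $\lambda\in[0,1]$, including the endpoints. It follows from the polar decomposition $W_{\alpha}=U|W_{\alpha}|$ with $U$ the unweighted bilateral shift (a unitary, since all $\alpha_{n}>0$) and $|W_{\alpha}|e_{n}=\alpha_{n}e_{n}$. From this one computes $W_{\alpha}^De_{n}=\alpha_{n+1}e_{n+1}$ and $\Delta(W_{\alpha})e_{n}=\sqrt{\alpha_{n}\alpha_{n+1}}\,e_{n+1}$. Expanding the square in $\beta_{n}$ then matches the definition of $P_{\lambda}$ term by term.

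Alternatively, for $\lambda\in(0,1)$ one could write down the inverse explicitly as $P_{\lambda}(W_{\alpha})^{-1}e_{n+1}=\beta_{n}^{-1}e_{n}$, which is bounded by $c^{-1}$.
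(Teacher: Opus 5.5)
Your proof is correct and follows the paper's route: you read off the weights of $P_{\lambda}(W_{\alpha})$ from \eqref{meanuni} and check that their infimum is positive. The one difference is your convex-combination bound $\beta_{n}\ge\inf_{k}\alpha_{k}$, which holds uniformly for all $\lambda\in[0,1]$; the paper's bound $(1-\lambda)^2\alpha_{n}\le\beta_{n}$ degenerates at $\lambda=1$, so it treats that case separately via invertibility of the Duggal transform.
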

\begin{proof}
If $\lambda=1$, then $P_{\lambda}(W_{\alpha})=W_{\alpha}^D$. Applying the fact that $T\in\mathbb{B}(\mathcal{H})$ is invertible if and only if $P_{1}(T)=T^D$, we obtain the desired result directly. For $\lambda\in[0,1)$, from \eqref{meanuni} we know that $P_{\lambda}(W_{\alpha})$ is a bilateral weighted shift. It suffices to prove $\inf\limits_{n\in\mathbb{Z}}((1-\lambda)\sqrt{\alpha_{n}}+\lambda\sqrt{\alpha_{n+1}})^2>0$. This follows immediately from $\inf\limits_{n\in\mathbb{Z}}\alpha_{n}>0$ and $(1-\lambda)^2\alpha_{n}\le((1-\lambda)\sqrt{\alpha_{n}}+\lambda\sqrt{\alpha_{n+1}})^2$ for $n\in\mathbb{Z}$.
\end{proof}
\begin{remark}\label{nonclosed}
For $\lambda\in[0,1)$, the converse of this proposition is not true in general. For example, let $\alpha_{n}=\begin{cases}
	1&\text{$n$ is even}\\
	\frac{1}{n^2}&\text{$n$ is odd}
\end{cases}.$ A direct computation gives $P_{\lambda}(W_{\alpha})e_{n}=\widetilde{\alpha_{n}}e_{n+1}$, where $\widetilde{\alpha_{n}}=\begin{cases}
(1-\lambda+\frac{\lambda}{|n+1|})^2&\text{$n$ is even}\\
(\frac{1-\lambda}{|n|}+\lambda)^2&\text{$n$ is odd}
\end{cases}$. From the inequality $\max\{\lambda^2,(1-\lambda)^2\}\le\widetilde{\alpha_{n}}\le1$ ($n\in\mathbb{Z}$) we obtain that $P_{\lambda}(W_{\alpha})$ is invertible. However, $\inf\limits_{n\in\mathbb{Z}}\alpha_{n}=0$ implies that $W_{\alpha}$ is not invertible.
\end{remark} 
In fact, the invertibility of $T\in\mathbb{B}(\mathcal{H})$ can be characterized via the power mean transform of $T$. To present this result, we first have the following proposition.
\begin{proposition}\label{kernel}
	Let $\lambda\in[0,1)$ and $T\in\mathbb{B}(\mathcal{H})$. Then $\mathcal{N}(P_{\lambda}(T))=\mathcal{N}(T)$. In particular $P_{\lambda}(T)=0$ if and only if $T=0$.
\end{proposition}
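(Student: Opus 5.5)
Write $P_{\lambda}(T)$ as a product with $U$ in the middle. Since $T=U|T|$, $T^D=|T|U$ and $\Delta(T)=|T|^{\frac12}U|T|^{\frac12}$, we have
\begin{equation*}
P_{\lambda}(T)=(1-\lambda)^2U|T|+\lambda^2|T|U+2\lambda(1-\lambda)|T|^{\frac12}U|T|^{\frac12}=A\,U\,B,
\end{equation*}
where $A=(1-\lambda)I+\lambda|T|^{\frac12}$ and $B=(1-\lambda)|T|^{\frac12}+\lambda|T|$. Expanding $AUB$ gives exactly the four terms, with the cross terms $\lambda(1-\lambda)|T|^{\frac12}U|T|^{\frac12}$ appearing twice, so the factorization is a direct check. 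Alternatively, $P_{\lambda}(T)=\bigl((1-\lambda)I+\lambda|T|^{\frac12}\bigr)U\bigl((1-\lambda)I+\lambda|T|^{\frac12}\bigr)|T|^{\frac12}$, and $B=A|T|^{\frac12}$ because all these operators commute.

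For the inclusion $\mathcal{N}(T)\subseteq\mathcal{N}(P_{\lambda}(T))$, take $x\in\mathcal{N}(T)=\mathcal{N}(|T|)$. Then $|T|^{\frac12}x=0$, so $Bx=0$ and hence $P_{\lambda}(T)x=0$.

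For the reverse inclusion, use $\lambda<1$. Since $|T|^{\frac12}\ge0$, we get $A\ge(1-\lambda)I$, and $1-\lambda>0$, so $A$ is an invertible positive operator. Thus $P_{\lambda}(T)x=0$ gives $UBx=0$. Since $Bx=A|T|^{\frac12}x$ lies in $\overline{\mathcal{R}(|T|)}=\mathcal{R}(U^\ast)$, on which $U$ is isometric (equivalently, $U^\ast UB=B$ by \eqref{polarbasic}), we get $Bx=U^\ast UBx=0$. Then $A|T|^{\frac12}x=0$, and the invertibility of $A$ gives $|T|^{\frac12}x=0$. Hence $|T|x=0$, so $x\in\mathcal{N}(T)$.

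The statement "$P_{\lambda}(T)=0$ if and only if $T=0$" follows immediately: $P_{\lambda}(T)=0$ means $\mathcal{N}(P_{\lambda}(T))=\mathcal{H}$, so $\mathcal{N}(T)=\mathcal{H}$.

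The only real content is the factorization through $U$ together with the invertibility of $A$. The main point to watch is that $U^\ast UB=B$, i.e.\ that $\mathcal{R}(B)\subseteq\overline{\mathcal{R}(|T|)}$. This holds because $B=|T|^{\frac12}A$, using commutation and $\overline{\mathcal{R}(|T|^{\frac12})}=\overline{\mathcal{R}(|T|)}$. The hypothesis $\lambda<1$ is essential: for $\lambda=1$, $P_1(T)=T^D$, and $A=|T|^{\frac12}$ is no longer invertible.
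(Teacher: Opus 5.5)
Your argument rests on the factorization $P_\lambda(T)=AUB$, and that identity is false. Expanding your product term by term gives
\[
AUB=(1-\lambda)^2U|T|^{\frac12}+\lambda(1-\lambda)U|T|+\lambda(1-\lambda)|T|^{\frac12}U|T|^{\frac12}+\lambda^2|T|^{\frac12}U|T|.
\]
This is not $(1-\lambda)^2U|T|+\lambda^2|T|U+2\lambda(1-\lambda)|T|^{\frac12}U|T|^{\frac12}$. The cross term appears only once, and the other terms have the wrong degree in $|T|$: $P_\lambda(cT)=cP_\lambda(T)$ for $c>0$, whereas $AUB$ mixes degrees $\frac12$, $1$ and $\frac32$. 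For $T=4I$ and $\lambda=\frac12$ you get $P_\lambda(T)=4I$ but $AUB=\frac92 I$; even at $\lambda=0$ your formula gives $U|T|^{\frac12}\neq T$.

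Nor can the idea be rescued by other choices $a(|T|)\,U\,b(|T|)$. For a weighted shift, $a(|W_\alpha|)Ub(|W_\alpha|)e_n=a(\alpha_{n+1})b(\alpha_n)e_{n+1}$, while $P_\lambda(W_\alpha)e_n=(\lambda\sqrt{\alpha_{n+1}}+(1-\lambda)\sqrt{\alpha_n})^2e_{n+1}$. The function $f(s,t)=(\lambda\sqrt s+(1-\lambda)\sqrt t)^2$ is not of the form $a(s)b(t)$: for $\lambda\in(0,1)$ one has $f(1,1)f(4,4)=4\neq(1+\lambda)^2(2-\lambda)^2=f(4,1)f(1,4)$. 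So you cannot sandwich $U$ between an invertible factor and a factor with range in $\overline{\mathcal{R}(|T|)}$, and your proof of $\mathcal{N}(P_\lambda(T))\subseteq\mathcal{N}(T)$ collapses. Your easy inclusion $\mathcal{N}(T)\subseteq\mathcal{N}(P_\lambda(T))$ is fine, and it follows directly from the definition, since each of the three terms annihilates $\mathcal{N}(|T|)=\mathcal{N}(U)$.

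The correct structure is a Sylvester-type sum rather than a product:
\[
P_\lambda(T)=(1-\lambda)Z|T|^{\frac12}+\lambda|T|^{\frac12}Z,\qquad Z=(1-\lambda)U|T|^{\frac12}+\lambda|T|^{\frac12}U .
\]
This calls for a positivity argument, which is what the paper does. Use that $U$ is isometric on $\overline{\mathcal{R}(|T|^{\frac12})}$ and that $U^\ast U|T|=|T|$. Then for every $x$,
\[
\operatorname{Re}\langle U^\ast P_\lambda(T)x,x\rangle=\bigl\|(1-\lambda)U|T|^{\frac12}x+\lambda|T|^{\frac12}Ux\bigr\|^2 .
\]
Hence $P_\lambda(T)x=0$ forces $(1-\lambda)U|T|^{\frac12}x=-\lambda|T|^{\frac12}Ux$. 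Applying $U^\ast$ and pairing with $x$ gives $(1-\lambda)\||T|^{\frac14}x\|^2=-\lambda\||T|^{\frac14}Ux\|^2$. The left side is nonnegative and the right side nonpositive, so $|T|^{\frac14}x=0$ because $\lambda<1$.
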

\begin{proof}
	Let $T=U|T|$ be the polar decomposition of $T$. Suppose $x\in\mathcal{N}(P_{\lambda}(T))$. Write $u=(1-\lambda)|T|^\frac{1}{2}x$ and $v=\lambda|T|^\frac{1}{2}Ux$. We obtain 
	\begin{equation}\label{iso}
		\|Uu\|=\|u\|
	\end{equation}
	since $U$ is an isometry on $(\mathcal{N}(U))^{\perp}=(\mathcal{N}(|T|))^{\perp}=(\mathcal{N}(|T|^\frac{1}{2}))^\perp=\overline{\mathcal{R}(|T|^{\frac{1}{2}})}$. On the other hand, from the definition of $P_{\lambda}(T)$ and the second equation in \eqref{polarbasic}, we have
\begin{equation}\label{equ}
\langle(1-\lambda)^2|T|x,x\rangle+\langle\lambda^2U^\ast|T|Ux,x\rangle+\langle 2\lambda(1-\lambda)U^\ast |T|^\frac{1}{2}U|T|^\frac{1}{2}x,x\rangle=\langle U^\ast P_{\lambda}(T)x,x\rangle=0,
\end{equation}
	which implies, by the positivity of $(1-\lambda)^2|T|$ and $\lambda^2U^\ast|T|U$, that $\langle 2\lambda(1-\lambda)U^\ast |T|^\frac{1}{2}U|T|^\frac{1}{2}x,x\rangle$ is a real number. Therefore, using \eqref{iso}, the above equation can be rewritten as
	$$\|u\|^2+\|v\|^2+2\operatorname{Re}\langle \lambda(1-\lambda)U^\ast |T|^\frac{1}{2}U|T|^\frac{1}{2}x,x\rangle=\|Uu\|^2+\|v\|^2+2\operatorname{Re}\langle Uu,v\rangle=\|Uu+v\|^2=0,$$
		where $\operatorname{Re}\langle y,z\rangle$ is the real part of $\langle y,z\rangle$ for $y,z\in\mathcal{H}$. The last equation yields $Uu=-v$, i.e., $$(1-\lambda)U|T|^\frac{1}{2}x=-\lambda|T|^\frac{1}{2}Ux.$$ Further, together with $U^\ast U=P_{\overline{\mathcal{R}(|T|)}}=P_{\overline{\mathcal{R}(|T|^\frac{1}{2})}}$, left-multiplying the above equation by $U^\ast$ gives $$\langle(1-\lambda)|T|^\frac{1}{2}x,x\rangle=\langle(1-\lambda)U^\ast U|T|^\frac{1}{2}x,x\rangle=-\langle\lambda U^\ast|T|^\frac{1}{2}Ux,x\rangle.$$ By the positivity of $(1-\lambda)|T|^\frac{1}{2}$ and $\lambda U^\ast|T|^\frac{1}{2}U$, the above equation implies $\langle(1-\lambda)|T|^\frac{1}{2}x,x\rangle=0$, and hence $\sqrt{1-\lambda}|T|^\frac{1}{4}x=0$. Consequently, $x\in\mathcal{N}(|T|^\frac{1}{4})=\mathcal{N}(|T|)=\mathcal{N}(T).$ Therefore, $\mathcal{N}(P_{\lambda}(T))\subseteq\mathcal{N}(T)$. The reverse inclusion $\mathcal{N}(T)\subseteq\mathcal{N}(P_{\lambda}(T))$ follows immediately from $\mathcal{N}(T)=\mathcal{N}(|T|^\frac{1}{2})=\mathcal{N}(|T|)=\mathcal{N}(U)$ and the definition of $P_{\lambda}(T)$.
\end{proof}
\begin{remark}\label{duggal}
	If $\lambda=1$, then $P_{\lambda}(T)=T^D$ for $T\in\mathbb{B}(\mathcal{H})$; however, $\mathcal{N}(T^D)=\mathcal{N}(T)$ does not hold in general. Consider the matrix $T= \begin{bmatrix}
		0&1\\
		0&0
	\end{bmatrix}=U|T|$ with $U=\begin{bmatrix}
		0&1\\
		0&0
	\end{bmatrix}$ and $|T|=\begin{bmatrix}
		0&0\\
		0&1
	\end{bmatrix}$. Then  $T^D=|T|U=\begin{bmatrix}
		0&0\\
		0&0
	\end{bmatrix}$ and hence $\mathcal{N}(T^D)\ne\mathcal{N}(T)$.
\end{remark}
\begin{theorem}\label{invertiblepmt}
	Let $\lambda\in(0,1)$ and $T\in\mathbb{B}(\mathcal{H})$. Then $T$ is invertible if and only if $P_{\lambda}(T)$ is invertible and $\mathcal{R}(T)$ is closed.
\end{theorem}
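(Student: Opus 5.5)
The plan is to reduce both directions to one positivity estimate for $U^\ast P_{\lambda}(T)$, using the computation already made in the proof of Proposition \ref{kernel}. Let $T=U|T|$ be the polar decomposition and set $S=|T|^{\frac12}$ and
$$C=(1-\lambda)US+\lambda SU .$$

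First I will check, for every $x\in\mathcal H$, the identity
$$\operatorname{Re}\langle U^\ast P_{\lambda}(T)x,x\rangle=\|Cx\|^2 .$$
By the second relation in \eqref{polarbasic},
$$U^\ast P_{\lambda}(T)=(1-\lambda)^2S^2+\lambda^2U^\ast S^2U+2\lambda(1-\lambda)U^\ast SUS .$$
The first two terms are positive, so they give $(1-\lambda)^2\|Sx\|^2+\lambda^2\|SUx\|^2$. The cross term gives $2\lambda(1-\lambda)\langle USx,SUx\rangle$. Taking real parts and using $\|USx\|=\|Sx\|$ (this is \eqref{iso}) yields exactly $\|(1-\lambda)USx+\lambda SUx\|^2=\|Cx\|^2$. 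This is the argument of Proposition \ref{kernel}, now taken without the assumption $P_{\lambda}(T)x=0$.

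The key step, and the one I expect to need the most care, is a lower bound for $\|Cx\|$ whenever $|T|$ is invertible. In that case $U^\ast U=I$, so
$$U^\ast C=(1-\lambda)S+\lambda U^\ast SU .$$
This is a sum of two positive operators, the first of which is $\ge(1-\lambda)s I$, where $s>0$ is the lower bound of $S$. Hence $\|Cx\|\ge\|U^\ast Cx\|\ge(1-\lambda)s\|x\|$. It follows that $\operatorname{Re}\langle U^\ast P_{\lambda}(T)x,x\rangle\ge(1-\lambda)^2s^2\|x\|^2$. An operator whose numerical range lies in a half-plane $\{\operatorname{Re}z\ge c\}$ with $c>0$ is invertible (by Lax--Milgram, or because both it and its adjoint are bounded below). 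So $U^\ast P_{\lambda}(T)$ is invertible whenever $|T|$ is invertible. Here $\lambda<1$ is what makes $(1-\lambda)s>0$.

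($\Rightarrow$) If $T$ is invertible, then $\mathcal R(T)=\mathcal H$ is closed. Also $|T|$ is invertible and $U$ is unitary. By the key step $U^\ast P_{\lambda}(T)$ is invertible, hence so is $P_{\lambda}(T)=U\bigl(U^\ast P_{\lambda}(T)\bigr)$.

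($\Leftarrow$) Suppose $P_{\lambda}(T)$ is invertible and $\mathcal R(T)$ is closed. By Proposition \ref{kernel}, $\mathcal N(T)=\mathcal N(P_{\lambda}(T))=\{0\}$, so $T$ is injective with closed range. Therefore $T$ is bounded below, so $|T|$ is invertible and $U$ is an isometry. The key step now shows that $U^\ast P_{\lambda}(T)$ is invertible. Consequently
$$U^\ast=\bigl(U^\ast P_{\lambda}(T)\bigr)P_{\lambda}(T)^{-1}$$
is invertible. Then $\mathcal N(U^\ast)=\mathcal N(T^\ast)=\{0\}$, so $U$ is unitary and $T=U|T|$ is invertible.
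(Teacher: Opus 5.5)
Your proof is correct. It has the same skeleton as the paper's: Proposition \ref{kernel} gives $\mathcal{N}(T)=\{0\}$, closedness of $\mathcal{R}(T)$ upgrades this to invertibility of $|T|$, and the factorization $U^\ast=\bigl(U^\ast P_{\lambda}(T)\bigr)P_{\lambda}(T)^{-1}$ finishes the converse. The difference is in how you show that $U^\ast P_{\lambda}(T)$ is invertible.

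The paper treats the two directions separately. For the forward direction it uses $\frac{1}{2}\bigl(U^\ast P_{\lambda}(T)+P_{\lambda}(T)^\ast U\bigr)=((1-\lambda)A+\lambda B)^2$ with $A=|T|^{\frac12}$ and $B=U^\ast|T|^{\frac12}U$. This relies on $UU^\ast=I$, which is needed to write $U^\ast|T|U=B^2$. In the converse $U$ is only known to be an isometry, so the paper argues differently there. It gets surjectivity of $U^\ast P_{\lambda}(T)$ from surjectivity of $U^\ast$ and invertibility of $P_{\lambda}(T)$, and injectivity by rerunning the kernel computation.

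Your identity $\operatorname{Re}\bigl(U^\ast P_{\lambda}(T)\bigr)=C^\ast C$, with $C=(1-\lambda)U|T|^{\frac12}+\lambda|T|^{\frac12}U$, holds for every $T$. When $U$ is unitary it reduces to the paper's identity, since then $C^\ast C=(U^\ast C)^\ast(U^\ast C)$. Your lower bound $U^\ast C\ge(1-\lambda)|T|^{\frac12}$ uses only $U^\ast U=I$. So one coercivity lemma covers both directions. In the converse, invertibility of $P_{\lambda}(T)$ is then needed only for injectivity of $T$ and for the final factorization.

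Your route is also quantitative: it gives, for example, $\|P_{\lambda}(T)^{-1}\|\le(1-\lambda)^{-2}\|T^{-1}\|$ for invertible $T$. Dropping the positive term $\lambda U^\ast|T|^{\frac12}U$ is also more direct than the paper's passage through the square roots of $A$ and $B$.
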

\begin{proof}
	Let $T=U|T|$ be the polar decomposition of $T$. \textit{Sufficiency}. Write $A=|T|^\frac{1}{2}$ and $B=U^\ast|T|^\frac{1}{2}U$. If $T$ is invertible, then it follows immediately that $\mathcal{R}(T)$ is closed and $U$ is unitary. Consequently, one can readily verify that $P_{\lambda}(T)=U((1-\lambda)^2A^2+\lambda^2B^2+2\lambda(1-\lambda)BA)$. Then, the proof reduces to showing that $C\triangleq(1-\lambda)^2A^2+\lambda^2B^2+2\lambda(1-\lambda)BA$ is invertible, i.e., $C$ and $C^\ast$ are bounded below. Indeed, the invertibility of $T$ implies that the positive operator $A^\frac{1}{2}$ (resp. $B^\frac{1}{2}$) is invertible and hence bounded below, that is, there exists $\delta_{1}>0$ (resp. $\delta_{2}>0$) such that $\|A^\frac{1}{2}x\|\ge\delta_{1}\|x\|$ (resp. $\|B^\frac{1}{2}x\|\ge\delta_{2}\|x\|$) for all $x\in\mathcal{H}$. Now we have
	\begin{align*}
		\Big\|\Big(((1-\lambda)A+\lambda B)^\frac{1}{2}\Big)x\Big\|^2&=\langle ((1-\lambda)A+\lambda B)x,x\rangle=(1-\lambda)\langle Ax,x\rangle+\lambda\langle Bx,x\rangle\\&=(1-\lambda)\|A^\frac{1}{2}x\|^2+\lambda\|B^\frac{1}{2}x\|^2\ge((1-\lambda)\delta_{1}^2+\lambda\delta_{2}^2)\|x\|^2,
	\end{align*}which implies the positive operator $((1-\lambda)A+\lambda B)^\frac{1}{2}$ is invertible, and hence $(1-\lambda)A+\lambda B$ is also invertible. Equivalently, $\|((1-\lambda)A+\lambda B)x\|\ge\delta_{3}\|x\|$ holds for some $\delta_{3}>0$ and every $x\in\mathcal{H}$. Then from the above inequality and
	$$\frac{C+C^\ast}{2}=(1-\lambda)^2A^2+\lambda^2B^2+\lambda(1-\lambda)BA+\lambda(1-\lambda)AB=((1-\lambda)A+\lambda B)^2$$ we deduce
	\begin{equation}\label{bounded}
		|\langle Cx,x\rangle|\ge\frac{1}{2}(\langle Cx,x\rangle+\overline{\langle Cx,x\rangle})=\langle \frac{C+C^\ast}{2}x,x\rangle=\|((1-\lambda)A+\lambda B)x\|^2\ge\delta_{3}^2\|x\|^2.
	\end{equation}
	Finally, it follows from the Cauchy-Schwarz inequality and \eqref{bounded} that $$\|Cx\|\|x\|\ge|\langle Cx,x\rangle|\ge\delta_{3}^2\|x\|^2$$
	and
	$\|C^\ast x\|\|x\|\ge|\langle C^\ast x,x\rangle|=|\langle Cx,x\rangle|\ge\delta_{3}^2\|x\|^2$, which means $C$ and $C^\ast$ are bounded below. Then the sufficiency is proved.
	
	\textit{Necessity}. Suppose $P_{\lambda}(T)$ is invertible. From Proposition \ref{kernel} we have $\mathcal{N}(T)=0$, which implies, by the closedness of $\mathcal{R}(T)$, that $T$ is bounded below. Then, since $\||T|x\|^2=\langle |T|x,|T|x\rangle=\langle |T|^2x,x\rangle=\langle T^\ast Tx,x\rangle=\|Tx\|^2$, it follows that $|T|$ is bounded below. Hence, $|T|$ is invertible, so
	\begin{equation}\label{sur}
		\mathcal{R}(U^\ast)=(\mathcal{N}(U))^\perp=(\mathcal{N}(|T|))^\perp=\mathcal{H}.
	\end{equation}
	Now, if we can prove that $U^\ast P_{\lambda}(T)$ is invertible, then \begin{equation*}
		U^\ast=U^\ast \Big(P_{\lambda}(T)(P_{\lambda}(T))^{-1}\Big)=\Big(U^\ast P_{\lambda}(T)\Big)(P_{\lambda}(T))^{-1}
	\end{equation*}
	expresses $U^\ast$ as a product of invertible operators, so $U^\ast$ is invertible. Equivalently, $U$ is invertible and consequently $T=U|T|$ is the product of invertible operators, i.e., $T$ is invertible. Indeed, from \eqref{sur} and the invertibility of $P_{\lambda}(T)$ we have that $U^\ast P_{\lambda}(T)$ is surjective. To show that $U^\ast P_{\lambda}(T)$ is injective, take $x\in\mathcal{N}(U^\ast P_{\lambda}(T))$. Consider the equation $\langle U^\ast P_{\lambda}(T)x,x\rangle=0$. Then, using the same argument as in Proposition \ref{kernel} for \eqref{equ}, we obtain $x\in\mathcal{N}(|T|)$. The invertibility of $|T|$ implies $x=0$. Hence, $U^\ast P_{\lambda}(T)$ is injective. The proof is complete. 	
\end{proof}
\begin{remark}\label{invpmt1}
	(i) If $\lambda=1$, then it is well-known that $T\in\mathbb{B}(\mathcal{H})$ is invertible if and only if $P_{\lambda}(T)=P_{1}(T)=T^D$ is invertible. If $\lambda=0$, then $P_{\lambda}(T)=P_{0}(T)=T$; hence this case is trivial.
	
	(ii) By Remark \ref{nonclosed}, the closedness of $\mathcal{R}(T)$ in the above theorem cannot be removed (since $\inf\limits_{n\in\mathbb{Z}}\alpha_{n}=0$, $\mathcal{R}(W_{\alpha})$ is not closed).
\end{remark}

For $\lambda\in\{0,1\}$, we have $\sigma(T)=\sigma(P_{\lambda}(T))$. The above theorem implies $\sigma(T)\not\subseteq\sigma(P_{\lambda}(T))$ for $\lambda\in(0,1)$ in general. However, if $T$ is quasinormal, i.e., $T=T^D$, then we immediately obtain $P_{\lambda}(T)=T$ since $\Delta(T)=T$, which yields $\sigma(T)=\sigma(P_{\lambda}(T))$. In fact, this result also holds for partial isometries.
\begin{corollary}
Let $\lambda\in[0,1]$ and $U\in\mathbb{B}(\mathcal{H})$ be a partial isometry. Then $\sigma(U)=\sigma(P_{\lambda}(U))$.
\end{corollary}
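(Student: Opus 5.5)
For a partial isometry $U$, the polar decomposition is $U = U\cdot|U|$ with $|U| = (U^\ast U)^{1/2} = U^\ast U = P$, where $P$ is the projection onto $\mathcal{N}(U)^\perp$. Since $P$ is a projection, $P^{1/2}=P$, so every fractional power of $|U|$ appearing in the definition of $P_\lambda$ collapses to $P$. The plan is to compute $P_\lambda(U)$ explicitly as a polynomial in $U$ and $U^\ast U$, and then compare spectra.

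First, $U^D = PU = U^\ast U U$ and $\Delta(U) = PUP = U^\ast U U U^\ast U$. Using $UU^\ast U = U$ from \eqref{polarbasic}, we get $UP = U$, hence $\Delta(U) = U^\ast U\,U = PU$. Therefore $\Delta(U) = U^D = PU$, and
\[
P_\lambda(U) = (1-\lambda)^2 U + \bigl(\lambda^2 + 2\lambda(1-\lambda)\bigr) PU = (1-\lambda)^2 U + (2\lambda-\lambda^2) PU .
\]
Writing $Q = I-P$ (the projection onto $\mathcal{N}(U)$), and using $(1-\lambda)^2 + (2\lambda-\lambda^2) = 1$, this becomes
\[
P_\lambda(U) = U - (2\lambda-\lambda^2) QU = (I - cQ)U, \qquad c = 2\lambda-\lambda^2 \in [0,1].
\]

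Next, I compare the spectra. The cases $\lambda\in\{0,1\}$ are already known ($P_1(U)=U^D$ and $\sigma(T^D)=\sigma(T)$), so the new content is $c<1$, where $I-cQ$ is invertible with inverse $I+\frac{c}{1-c}Q$. Since $UQ=0$, the key identity is
\[
U(I-cQ) = U, \qquad\text{hence}\qquad (I-cQ)^{-1}P_\lambda(U)(I-cQ) = U(I-cQ) = U.
\]
So $P_\lambda(U)$ is similar to $U$ via the invertible operator $I-cQ$, which gives $\sigma(P_\lambda(U)) = \sigma(U)$. If one prefers to avoid this, an alternative route is the general fact $\sigma(AB)\cup\{0\}=\sigma(BA)\cup\{0\}$ applied to $A=I-cQ$, $B=U$, followed by a separate treatment of $0$; the similarity argument is cleaner and handles $0$ automatically.

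I expect no real obstacle here; the only points needing care are verifying $UP=U$ (that is, $UQ=0$) and confirming invertibility of $I-cQ$ for $\lambda\in[0,1)$, where $c<1$. For the endpoint $\lambda=1$ (so $c=1$), $I-Q=P$ is not invertible, and I will instead cite the standard fact that the Duggal transform preserves the spectrum, exactly as in Remark \ref{invpmt1}(i).
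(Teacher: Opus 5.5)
Your proof is correct, and it starts from the same factorization the paper uses. The paper writes $P_{\lambda}(U)=\bigl((1-\lambda)^2+(2\lambda-\lambda^2)U^\ast U\bigr)U$, which is exactly your $(I-cQ)U$.

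The two arguments diverge at the spectral comparison. The paper applies Jacobson's lemma to get $\sigma(P_\lambda(U))\setminus\{0\}=\sigma(U(I-cQ))\setminus\{0\}=\sigma(U)\setminus\{0\}$, using the same identity $U(I-cQ)=U$ that you use. It then handles the point $0$ separately, through Theorem \ref{invertiblepmt} together with Remark \ref{invpmt1}(i). That theorem rests on Proposition \ref{kernel} and requires the closedness of $\mathcal{R}(U)$.

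You instead note that $I-cQ$ is invertible for $\lambda<1$ and conjugate, obtaining $P_\lambda(U)=(I-cQ)U(I-cQ)^{-1}$. This is more elementary and self-contained, since it needs no appeal to the invertibility theorem. It also handles $0$ automatically and gives a stronger conclusion: $U$ and $P_\lambda(U)$ are similar, so every finer part of the spectrum (point, approximate point, etc.) coincides too.

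The price is that your argument fails at $\lambda=1$, where $I-cQ=U^\ast U$ need not be invertible, and you must fall back on the known spectral invariance of the Duggal transform. The paper's Jacobson step does not need the left factor to be invertible, so it treats the nonzero spectrum uniformly for all $\lambda\in[0,1]$. Even so, the paper also relies on a citation for the point $0$ when $\lambda=1$.
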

\begin{proof}
It is well known that $U=U|U|$ is the polar decomposition of $U$ (with the orthogonal projection $|U|=U^\ast U$). Then, from the definition of $P_{\lambda}(U)$ and the third equation in \eqref{polarbasic}, we have $$P_{\lambda}(U)=(1-\lambda)^2U+\lambda^2U^\ast UU+2\lambda(1-\lambda)U^\ast UUU^\ast U=((1-\lambda)^2+(2\lambda-\lambda^2)U^\ast U)U.$$
By \eqref{polarbasic} and Jacobson's Lemma, we obtain
$$\sigma(P_{\lambda}(U))\backslash\{0\}=\sigma(U((1-\lambda)^2+(2\lambda-\lambda^2)U^\ast U))\backslash\{0\}=\sigma(U)\backslash\{0\}.$$
Therefore, it suffices to prove that $U$ is invertible if and only if so is $P_{\lambda}(U)$. Indeed, it follows immediately from Theorem \ref{invertiblepmt} and Remark \ref{invpmt1} (i) since $\mathcal{R}(U)$ is closed.
\end{proof}

The spectra of an operator $T\in\mathbb{B}(\mathcal{H})$ and its power mean transform $P_{\lambda}(T)$ are not necessarily related. For the bilateral weighted shift operator $W_{\alpha}$ in the example below, we even have $\sigma(W_{\alpha})\bigcap\sigma(P_{\lambda}(W_{\alpha}))=\varnothing$ for $\lambda\in(0,1)$. Recall from \cite[Proposition 2.11]{ani-rrac-2023} that the spectrum of $W_{\alpha}$ is given by 
\begin{equation}\label{spws}
\sigma(W_{\alpha})=\{\gamma\in\mathbb{C}:|\gamma|\le r(W_{\alpha})\}
\end{equation}
if $W_{\alpha}$ is not invertible, and  \begin{equation}\label{spws1}
\sigma(W_{\alpha})=\{\gamma\in\mathbb{C}:\frac{1}{r(W_{\alpha}^{-1})}\le|\gamma|\le r(W_{\alpha})\}
\end{equation}
if $W_{\alpha}$ is invertible.
\begin{example}\label{estisp}
Let $W_{\alpha}$ be the bilateral weighted shift, where $\alpha_{n}=\begin{cases}
	1&\text{$n$ is even}\\
	4&\text{$n$ is odd}
\end{cases}$. It follows that $((1-\lambda)\sqrt{\alpha_{n}}+\lambda\sqrt{\alpha_{n+1}})^2=\begin{cases}
	(1+\lambda)^2&\text{$n$ is even}\\(2-\lambda)^2&\text{$n$ is odd}
\end{cases}.$ Since $\inf\limits_{n\in\mathbb{Z}}\alpha_{n}=1$ and $\inf\limits_{n\in\mathbb{Z}}((1-\lambda)\sqrt{\alpha_{n}}+\lambda\sqrt{\alpha_{n+1}})^2=\min\{(1+\lambda)^2,(2-\lambda)^2\}>0$, by \eqref{meanuni} we obtain that $W_{\alpha}$ and $P_{\lambda}(W_{\alpha})$ are invertible. By \cite[Remark 2.8 (ii)]{ani-rrac-2023} and Gelfand's spectral radius formula, together with \eqref{spws1}, we have $\sigma(W_{\alpha})=\{\gamma:|\gamma|=2\}$ and $\sigma(P_{\lambda}(W_{\alpha}))=\{\gamma:|\gamma|=(1+\lambda)(2-\lambda)\}$. Clearly, $\sigma(W_{\alpha})\bigcap\sigma(P_{\lambda}(W_{\alpha}))=\varnothing$ for $\lambda\in(0,1)$.
\end{example}
But for a class of bilateral weighted shift operators, we have the following spectral inclusion.
\begin{proposition}\label{spin}
Let $\lambda\in[0,1]$ and $W_{\alpha}$ be a bilateral weighted shift such that $P_{\lambda}(W_{\alpha})$ is non-invertible. Then
\begin{equation}\label{spin1}
4\lambda(1-\lambda)\sigma(W_{\alpha})\equiv\{4\lambda(1-\lambda)\gamma:\gamma\in\sigma(W_{\alpha})\}\subseteq\sigma(P_{\lambda}(W_{\alpha})).
\end{equation}
\end{proposition}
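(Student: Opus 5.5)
The plan is to reduce the inclusion \eqref{spin1} to an inequality between spectral radii, using the description \eqref{spws} of the spectrum of a non-invertible bilateral weighted shift.

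\emph{Trivial cases.} If $\lambda\in\{0,1\}$, the left-hand side of \eqref{spin1} is $\{0\}$. Since $P_{\lambda}(W_{\alpha})$ is non-invertible, $0\in\sigma(P_{\lambda}(W_{\alpha}))$, and we are done.

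\emph{Reduction for $\lambda\in(0,1)$.} By \eqref{meanuni}, $P_{\lambda}(W_{\alpha})$ is a bilateral weighted shift with positive weights
\[
\beta_{n}=((1-\lambda)\sqrt{\alpha_{n}}+\lambda\sqrt{\alpha_{n+1}})^2 .
\]
It is non-invertible by hypothesis, so \eqref{spws} gives
\[
\sigma(P_{\lambda}(W_{\alpha}))=\{\gamma:|\gamma|\le r(P_{\lambda}(W_{\alpha}))\}.
\]
On the other side, $\sigma(W_{\alpha})\subseteq\{\gamma:|\gamma|\le r(W_{\alpha})\}$ always holds. It therefore suffices to prove
\[
4\lambda(1-\lambda)\,r(W_{\alpha})\le r(P_{\lambda}(W_{\alpha})).
\]

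\emph{Pointwise comparison of weights.} By the weighted AM--GM inequality,
\[
(1-\lambda)\sqrt{\alpha_{n}}+\lambda\sqrt{\alpha_{n+1}}\ \ge\ 2\sqrt{\lambda(1-\lambda)}\,(\alpha_{n}\alpha_{n+1})^{1/4}.
\]
Squaring gives $\beta_{n}\ge 4\lambda(1-\lambda)\sqrt{\alpha_{n}\alpha_{n+1}}$ for every $n\in\mathbb{Z}$. The right-hand side is $4\lambda(1-\lambda)$ times the $n$-th weight of the Aluthge transform $\Delta(W_{\alpha})$, which is the bilateral shift with weights $\sqrt{\alpha_{n}\alpha_{n+1}}$ by the same computation as in the unilateral case.

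\emph{Passing to spectral radii.} For a weighted shift with weights $\gamma_{n}$, the $k$-th power satisfies $\|W^k\|=\sup_{m}\prod_{j=m}^{m+k-1}\gamma_{j}$, which is monotone in the weights. Hence
\[
\|P_{\lambda}(W_{\alpha})^k\|\ge(4\lambda(1-\lambda))^k\|\Delta(W_{\alpha})^k\|.
\]
Taking $k$-th roots and applying Gelfand's formula yields
\[
r(P_{\lambda}(W_{\alpha}))\ge 4\lambda(1-\lambda)\,r(\Delta(W_{\alpha})).
\]

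\emph{Main step.} It remains to know that $r(\Delta(W_{\alpha}))=r(W_{\alpha})$. This is the well-known fact that the Aluthge transform preserves the spectrum (Jung--Ko--Pearcy), in particular the spectral radius, and I would simply cite it.

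If a self-contained argument is preferred, one can compare directly:
\[
\prod_{j=m}^{m+k-1}\sqrt{\alpha_{j}\alpha_{j+1}}=\Big(\prod_{j=m}^{m+k-1}\alpha_{j}\Big)^{1/2}\Big(\prod_{j=m+1}^{m+k}\alpha_{j}\Big)^{1/2}.
\]
This gives $\|\Delta(W_{\alpha})^k\|\le\|W_{\alpha}^k\|$, which is the wrong direction for the lower bound. For the needed direction, use that $\|W_{\alpha}^{k+1}\|$ is bounded by the product of these two $k$-fold products together with one additional weight $\le\|W_{\alpha}\|$, which shows $\|W_{\alpha}^{k+1}\|\le\|W_{\alpha}\|\,\|\Delta(W_{\alpha})^{k}\|$. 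Taking $k$-th roots then gives $r(W_{\alpha})\le r(\Delta(W_{\alpha}))$.

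This spectral-radius identification is the only non-routine point; everything else is the AM--GM comparison combined with \eqref{spws}.
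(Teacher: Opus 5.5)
Your proof is correct and follows the paper's argument. The shared steps are the pointwise AM--GM bound $((1-\lambda)\sqrt{\alpha_n}+\lambda\sqrt{\alpha_{n+1}})^2\ge 4\lambda(1-\lambda)\sqrt{\alpha_n\alpha_{n+1}}$, the product formula for the spectral radius of a weighted shift, the identity $r(\Delta(W_\alpha))=r(W_\alpha)$, and the disk description \eqref{spws} of $\sigma(P_\lambda(W_\alpha))$.

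There are two minor differences, both valid. First, you only use $\sigma(W_\alpha)\subseteq\{|\gamma|\le r(W_\alpha)\}$, so you do not need Proposition \ref{inws}. Second, your elementary estimate $\|W_\alpha^{k+1}\|\le\|W_\alpha\|\,\|\Delta(W_\alpha)^k\|$ holds because the ratio of the relevant products is $\sqrt{\alpha_m\alpha_{m+k}}\le\|W_\alpha\|$, and it correctly replaces the citation of $r(\Delta(T))=r(T)$.
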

\begin{proof}
Note that $r(T)=r(\Delta(T))$ for $T\in\mathbb{B}(\mathcal{H})$ (see \cite[Theorem 4.12]{zhou-glma-2023} ). By the arithmetic-geometric mean inequality, we have	
\begin{equation}\label{arge}
	4\lambda(1-\lambda)\sqrt{\alpha_{i}\alpha_{i+1}}=\Big(2{\sqrt{\lambda(1-\lambda)\sqrt{\alpha_{i}\alpha_{i+1}}}}\Big)^2\le\Big((1-\lambda)\sqrt{\alpha_{i}}+\lambda\sqrt{\alpha_{i+1}}\Big)^2
\end{equation}
for $i\in\mathbb{Z}$. Then by \eqref{meanuni} we obtain
\begin{align*}
	4\lambda(1-\lambda)r(W_{\alpha})=4\lambda(1-\lambda)r(\Delta(W_{\alpha}))&=4\lambda(1-\lambda)\lim\limits_{n\to\infty}\Bigg(\sup\limits_{k\in\mathbb{Z}}\prod_{i=k}^{k+n-1}\sqrt{\alpha_{i}\alpha_{i+1}}\Bigg)^{\frac{1}{n}}\\&\le\lim\limits_{n\to\infty}\Bigg(\sup\limits_{k\in\mathbb{Z}}\prod\limits_{i=k}^{k+n-1}\Big((1-\lambda)\sqrt{\alpha_{i}}+\lambda\sqrt{\alpha_{i+1}}\Big)^2\Bigg)^{\frac{1}{n}}\\&=r(P_{\lambda}(W_{\alpha})).
\end{align*}
From Proposition \ref{inws}, we know that $W_{\alpha}$ is not invertible. Consequently, the desired inclusion follows from \eqref{spws} and the above inequality.
\end{proof}
\begin{remark}\label{empty}
The condition that $P_{\lambda}(W_{\alpha})$ is non-invertible cannot be removed. Consider the bilateral weighted shift operator $W_{\alpha}$ in Example \ref{estisp}. If $x\in4\lambda(1-\lambda)\sigma(W_{\alpha})\bigcap\sigma(P_{\lambda}(W_{\alpha}))$, then the modulus of $x$ satisfies $|x|=4\lambda(1-\lambda)\cdot2=(1+\lambda)(2-\lambda)$. However, the quadratic equation in $\lambda$ has no solution in $[0,1]$. Thus $4\lambda(1-\lambda)\sigma(W_{\alpha})\bigcap\sigma(P_{\lambda}(W_{\alpha}))=\varnothing$.
\end{remark}

Under the assumption of Proposition \ref{spin}, for $\lambda\in[0,1]$ the inequality $$4\lambda(1-\lambda)r(W_{\alpha})\le\|P_{\lambda}(W_{\alpha})\|$$ follows from \eqref{spin1} and \eqref{spnuno}. For any $T\in\mathbb{B}(\mathcal{H})$, the following theorem provides a norm estimate for $P_{\lambda}(T)$ and shows that the above inequality also holds for $T$. Before that, we need the lemma below (see also \cite[Proposition 1 (8)]{seddik-mia-2025}), which is a variant of the Heinz inequality. For the reader’s convenience, we give a proof.
\begin{lemma}\label{heinzvar}
Let $X\in\mathbb{B}(H)$. Suppose $A$ and $B$ are positive. Then
\begin{equation}\label{normesti}
	\Big\|2A^\frac{1}{2}XB^\frac{1}{2}+AX+XB\big\|\ge4\Big\|A^\frac{1}{2}XB^\frac{1}{2}\Big\|.
\end{equation}	
\end{lemma}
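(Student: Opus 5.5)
Since the three terms on the left of \eqref{normesti} are not linked by any inequality among themselves, I plan to prove the estimate as two triangle inequalities. The first is the classical Heinz (arithmetic--geometric mean) inequality
\[
\|AX+XB\|\ge 2\|A^{\frac12}XB^{\frac12}\|\qquad (A,B\ge 0),
\]
which I will take as known. The second is a reverse triangle inequality that is also sharp. The cleanest route is to realize the whole left-hand side as a single Heinz-type expression. Put $a=A^{\frac12}$ and $b=B^{\frac12}$, both positive. Then
\[
2A^{\frac12}XB^{\frac12}+AX+XB = a^2X+2aXb+Xb^2 = a(aX+Xb)+(aX+Xb)b.
\]
With $Y=aX+Xb$, the left-hand side equals $aY+Yb$.

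The key steps, in order, are as follows.
\begin{enumerate}
\item Apply Heinz's inequality to the pair $a,b$ and the operator $Y$. This gives
\[
\|aY+Yb\|\ge 2\|a^{\frac12}Yb^{\frac12}\|.
\]
\item Compute
\[
a^{\frac12}Yb^{\frac12}=a^{\frac32}Xb^{\frac12}+a^{\frac12}Xb^{\frac32} = c^2Z+Zd^2,
\]
where $c=a^{\frac12}=A^{\frac14}$, $d=b^{\frac12}=B^{\frac14}$ and $Z=cXd=A^{\frac14}XB^{\frac14}$.
\item Apply Heinz again, this time with positive operators $c^2,d^2$ and the operator $Z$:
\[
\|c^2Z+Zd^2\|\ge 2\|cZd\| = 2\|A^{\frac12}XB^{\frac12}\|.
\]
\end{enumerate}
Chaining the three steps yields the factor $2\cdot2=4$, which is exactly \eqref{normesti}.

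The only real obstacle is the algebraic rewriting in the first paragraph. The identity $a(aX+Xb)+(aX+Xb)b=a^2X+2aXb+Xb^2$ must be verified carefully, because the middle term arises once from $a\cdot Xb$ and once from $aX\cdot b$. After that, both applications of Heinz's inequality are in the standard form $\|PW+WQ\|\ge2\|P^{\frac12}WQ^{\frac12}\|$ with $P,Q\ge0$.

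If the paper prefers a self-contained argument instead of citing Heinz, I would prove that inequality in the usual way. First reduce to the case $P=Q$ using the $2\times2$ block trick $\begin{bmatrix}P&0\\0&Q\end{bmatrix}$, $\begin{bmatrix}0&W\\0&0\end{bmatrix}$. Next reduce to invertible $P$ by replacing $P$ with $P+\varepsilon I$ and letting $\varepsilon\to0$. Finally, use the spectral-radius (or log-convexity) argument for $t\mapsto\|P^tWP^{1-t}\|$.
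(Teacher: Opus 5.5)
Your proof is correct and is essentially the paper's argument: both write the left-hand side as a Heinz-type expression in an intermediate operator and apply Heinz's inequality twice to get the factor $4$. Your $a^{1/2}Yb^{1/2}=A^{3/4}XB^{1/4}+A^{1/4}XB^{3/4}$ is exactly the paper's $Y$; the only difference is that you use the form $\|PW+WQ\|\ge 2\|P^{1/2}WQ^{1/2}\|$, valid for all positive $P,Q$, which spares you the paper's reduction to invertible $A+\varepsilon I$, $B+\varepsilon I$ (your opening remark about ``two triangle inequalities'' plays no role and can be dropped).
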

\begin{proof}
By continuity, it is enough to prove Lemma \ref{heinzvar} for the invertible operators $A+\varepsilon I$ and $B+\varepsilon I$, where $\varepsilon>0$ and $I$ is the identity operator on $\mathcal{H}$. Hence we can assume that $A$ and $B$ are invertible. The Heinz inequality states that if $P$ and $Q$ are positive invertible operators and $W$ is any operator, then\begin{equation}
	2\|W\|\le\Big\|P^{\frac{1}{2}}WQ^{-\frac{1}{2}}+P^{-\frac{1}{2}}WQ^{\frac{1}{2}}\Big\|.
\end{equation} 
Let $Z=A^\frac{1}{2}XB^\frac{1}{2}$ and $Y=A^\frac{1}{4}ZB^{-\frac{1}{4}}+A^{-\frac{1}{4}}ZB^\frac{1}{4}$. Then, applying Heinz's inequality twice, we obtain
\begin{align*}
	\Big\|AX+XB+2A^\frac{1}{2}XB^\frac{1}{2}\Big\|&=\Big\|A^\frac{1}{4}YB^{-\frac{1}{4}}+A^{-\frac{1}{4}}YB^\frac{1}{4}\Big\|\\&\ge2\Big\|A^\frac{1}{4}ZB^{-\frac{1}{4}}+A^{-\frac{1}{4}}ZB^{\frac{1}{4}}\Big\|\\&\ge4\Big\|A^\frac{1}{2}XB^\frac{1}{2}\Big\|,
\end{align*}
as desired.
\end{proof}

\begin{theorem}\label{spectral}
	Let $\lambda\in[0,1]$ and $T\in\mathbb{B}(\mathcal{H})$. Then
	\begin{equation}\label{thm2}
		4\lambda(1-\lambda)\|\Delta(T)\|\le\|P_{\lambda}(T)\|\le(1-2\lambda+2\lambda^2)\Big\|M_{\frac{(1-\lambda)^2}{(1-\lambda)^2+\lambda^2}}(T)\Big\|+(2\lambda-2\lambda^2)\|\Delta(T)\|.
	\end{equation}
	In particular, 
	\begin{equation}\label{normest2}
		4\lambda(1-\lambda)r(T)\le\|P_{\lambda}(T)\|\le\|T\|.
	\end{equation}
\end{theorem}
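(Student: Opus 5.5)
The plan is to rewrite $P_{\lambda}(T)$ in a form where Lemma \ref{heinzvar} applies, and then read off everything else from the triangle inequality and known facts about the Aluthge transform.

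\emph{Upper bound.} By definition,
\[
P_{\lambda}(T)=(1-2\lambda+2\lambda^2)M_{\frac{(1-\lambda)^2}{(1-\lambda)^2+\lambda^2}}(T)+(2\lambda-2\lambda^2)\Delta(T),
\]
exactly as in \eqref{uni}. The right inequality in \eqref{thm2} is then the triangle inequality. For \eqref{normest2} I would use $\|M_\mu(T)\|\le\mu\|T\|+(1-\mu)\|T^D\|\le\|T\|$ and $\|\Delta(T)\|\le\|T\|$. Both follow from $\|T^D\|=\||T|U\|\le\||T|\|=\|T\|$ and $\||T|^{1/2}U|T|^{1/2}\|\le\|T\|$. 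Since the two coefficients sum to $1$, this gives $\|P_\lambda(T)\|\le\|T\|$.

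\emph{Lower bound.} The idea is to factor $T$, $T^D$ and $\Delta(T)$ as $U$ times a Heinz-type expression. Put $A=(1-\lambda)^2|T|$, $B=\lambda^2 U^\ast|T|U$ and $X=U^\ast$. All three are positive or bounded as required, but $U^\ast$ is not an isometry in general, so I would instead left-multiply by $U$ at the end. Using \eqref{polarbasic} ($UU^\ast T=T$ and so on), I expect
\[
U^\ast P_{\lambda}(T)=(1-\lambda)^2|T|+\lambda^2U^\ast|T|U+2\lambda(1-\lambda)U^\ast|T|^{1/2}U|T|^{1/2},
\]
as used in \eqref{equ}. This is not directly of the form $AX+XB+2A^{1/2}XB^{1/2}$, so a better choice is needed. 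Take $A=(1-\lambda)^2|T^\ast|$, $B=\lambda^2|T|$ and $X=U$. Then $|T^\ast|=U|T|U^\ast$, so $AX=(1-\lambda)^2U|T|U^\ast U=(1-\lambda)^2T$ and $XB=\lambda^2U|T|$. That does not match either, since we need $\lambda^2|T|U$.

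So I would swap the roles: $A=\lambda^2|T|$, $B=(1-\lambda)^2|T|$, $X=U$. Then $AX=\lambda^2|T|U=\lambda^2T^D$, $XB=(1-\lambda)^2U|T|=(1-\lambda)^2T$, and $2A^{1/2}XB^{1/2}=2\lambda(1-\lambda)|T|^{1/2}U|T|^{1/2}=2\lambda(1-\lambda)\Delta(T)$. Hence $P_\lambda(T)=AX+XB+2A^{1/2}XB^{1/2}$ exactly. Lemma \ref{heinzvar} then gives
\[
\|P_\lambda(T)\|\ge4\|A^{1/2}XB^{1/2}\|=4\lambda(1-\lambda)\|\Delta(T)\|,
\]
which is the left inequality in \eqref{thm2}.

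\emph{Spectral radius.} Finally, the left side of \eqref{normest2} follows from $r(T)=r(\Delta(T))\le\|\Delta(T)\|$, where the equality is \cite[Theorem 4.12]{zhou-glma-2023}. The only real obstacle is spotting this choice of $A$, $B$ and $X$, with the same positive operator $|T|$ scaled differently on each side. Once that identification is made, every step is routine.
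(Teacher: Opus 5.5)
Your proposal is correct and is essentially the paper's proof. The paper likewise applies Lemma \ref{heinzvar} with $X=U$ and $A$, $B$ scalar multiples of $|T|$; it takes $A=4\lambda^2|T|$ and $B=4(1-\lambda)^2|T|$, which differs from your choice only by an overall factor of $4$. It then obtains the remaining inequalities from the decomposition $P_\lambda(T)=(1-2\lambda+2\lambda^2)M_{\frac{(1-\lambda)^2}{(1-\lambda)^2+\lambda^2}}(T)+(2\lambda-2\lambda^2)\Delta(T)$, the triangle inequality, $\|T^D\|,\|\Delta(T)\|\le\|T\|$, and $r(T)=r(\Delta(T))$, exactly as you do.
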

\begin{proof}
From the definition of $P_{\lambda}(T)$, we can also write\begin{equation}\label{exp}
	P_{\lambda}(T)=(1-2\lambda+2\lambda^2)M_{\frac{(1-\lambda)^2}{(1-\lambda)^2+\lambda^2}}(T)+(2\lambda-2\lambda^2)\Delta(T)
\end{equation} Let $T=U|T|$ be the polar decomposition of $T$. Substituting $A=4\lambda^2|T|$, $B=4(1-\lambda)^2|T|$ and $X=U$ into \eqref{normesti}, we obtain$$\Big\|8\lambda(1-\lambda)\Delta(T)+4\lambda^2 T^D+4(1-\lambda)^2T\Big\|\ge16\lambda(1-\lambda)\|\Delta(T)\|.$$
From the above inequality and \eqref{exp}, we immediately obtain \eqref{thm2}. Furthermore, by the submultiplicativity of the norm, we have
$\|T^D\|\le\|T\|$ and $\|\Delta(T)\|\le\|T\|$. Then combining the definition of $M_{\frac{(1-\lambda)^2}{(1-\lambda)^2+\lambda^2}}(T)$, \eqref{thm2} and $r(T)=r(\Delta(T))\le\|\Delta(T)\|$, we directly get \eqref{normest2}. The proof is complete.
\end{proof}
\begin{remark}
It should be mentioned that\begin{equation}\label{maxnorm}
	(2\lambda-2\lambda^2)\|\Delta(T)\|\le(1-2\lambda+2\lambda^2)\Big\|M_{\frac{(1-\lambda)^2}{(1-\lambda)^2+\lambda^2}}(T)\Big\|
\end{equation}
for $\lambda\in[0,1]$. This follows from $2\sqrt{t-t^2}\|\Delta(T)\|\le\|M_{t}(T)\|$ for $t\in[0,1]$ (see \cite[Theorem 3.2]{zamani-jmaa-2021}) and $t=\frac{(1-\lambda)^2}{(1-\lambda)^2+\lambda^2}$.
\end{remark}

If $T$ is quasinormal, then $P_{\lambda}(T)=M_{\frac{(1-\lambda)^2}{(1-\lambda)^2+\lambda^2}}(T)=\Delta(T)$, which yields the right‑hand equality in \eqref{thm2}. To prove that the equality on the left‑hand side of \eqref{thm2} can hold for $\lambda\in(0,1)$, it suffices to find an operator $T$ such that $4\lambda(1-\lambda)\Delta(T)=P_{\lambda}(T)$. Such an example is given below.
\begin{example}
Let $\mathscr{X}=\{1,2,3\}$ and let $\mu$ be the measure on the power set $2^\mathscr{X}$ defined by $\mu(\{x\})=\begin{cases}
	1&x=1,2\\0&x=3
\end{cases}.$ Take $\lambda\in(0,1)$. Define the transform $\phi$ on $\mathscr{X}$ by $\phi(x)=\begin{cases}
	2&x=1\\
	3&x=2\\
	1&x=3
\end{cases}$ and the function $\omega:\mathscr{X}\to\mathbb{R}$ by $\omega(x)=\begin{cases}
	1&x=1\\
	(\frac{\lambda}{1-\lambda})^2&x=2\\
	(\frac{1-\lambda}{\lambda})^2&x=3
\end{cases}.$ Consider the weighted composition operator $C_{\phi,\omega}$ on the Hilbert space $L^2(\mu)$ of square‑integrable functions with respect to $\mu$, defined by $$C_{\phi,\omega}f=\omega\cdot f\circ\phi, \quad f\in L^2(\mu).$$By \cite[Theorem 18 and Proposition 79]{b-j-j-sW}, the polar decomposition of $C_{\phi,\omega}$ is $C_{\phi,\omega}=U|C_{\phi,\omega}|$ with $Uf=\frac{\omega}{(\mathsf{h}_{\phi,\omega}\circ\phi)^{\frac{1}{2}}}\cdot f\circ\phi$ and $|C_{\phi,\omega}|f=\mathsf{h}_{\phi,\omega}^{1/2}\cdot f$, where $$\mathsf{h}_{\phi,\omega}(x)=\begin{cases}
	(\frac{1-\lambda}{\lambda})^4&x=1\\
	1&x=2\\
	(\frac{\lambda}{1-\lambda})^4&x=3
\end{cases}\quad\text{and}\quad\mathsf{h}_{\phi,\omega}\circ\phi(x)=\begin{cases}
	1&x=1\\
	(\frac{\lambda}{1-\lambda})^4&x=2\\
	(\frac{1-\lambda}{\lambda})^4&x=3
\end{cases}.$$  Then 
\begin{equation}\label{comdelta}
	\Delta(C_{\phi,\omega})f=(\frac{\mathsf{h}_{\phi,\omega}}{\mathsf{h}_{\phi,\omega}\circ\phi})^\frac{1}{4}\cdot\omega\cdot f\circ\phi,
\end{equation} and from the definition of $P_{\lambda}(C_{\phi,\omega})$ it is not difficult to obtain that
\begin{equation}\label{compower}
	P_{\lambda}(C_{\phi,\omega})f=((1-\lambda)+\lambda(\frac{\mathsf{h}_{\phi,\omega}}{\mathsf{h}_{\phi,\omega}\circ\phi})^\frac{1}{4})^2\cdot \omega\cdot f\circ\phi
\end{equation}  (see also Theorem 3.2 in \cite{benhida-mn-2020}, where replacing $\alpha$ by $1$ and $\frac{1}{2}$ gives $C_{\phi,\omega}^D$ and $\Delta(C_{\phi,\omega})$, respectively). A direct computation shows that for $x\in\{1,2\}$, $$4\lambda(1-\lambda)(\frac{\mathsf{h}_{\phi,\omega}}{\mathsf{h}_{\phi,\omega}\circ\phi})^\frac{1}{4}(x)=((1-\lambda)+\lambda(\frac{\mathsf{h}_{\phi,\omega}}{\mathsf{h}_{\phi,\omega}\circ\phi}))^\frac{1}{4})^2(x)=4(1-\lambda)^2$$
Since $\mu(\{3\})=0$, combining the above expression with \eqref{comdelta} and \eqref{compower} yields $$4\lambda(1-\lambda)\Delta(C_{\phi,\omega})=P_{\lambda}(C_{\phi,\omega}),$$ as desired.
\end{example}

As a consequence of Theorem \ref{spectral}, we immediately get a spectral radius equation via the power mean transform.

\begin{corollary}
	Let $\lambda\in(0,1)$ and $T\in\mathbb{B}(\mathcal{H})$. Then $r(T)=\lim\limits_{n\to\infty}\|P_{\lambda}(T^n)\|^\frac{1}{n}$.
\end{corollary}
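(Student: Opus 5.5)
Apply \eqref{normest2} of Theorem \ref{spectral} to $T^n$ in place of $T$, then use Gelfand's spectral radius formula; the only extra ingredient is the identity $r(T^n)=r(T)^n$.

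First, fix $\lambda\in(0,1)$ and $n\ge1$. Since $T^n\in\mathbb{B}(\mathcal{H})$, \eqref{normest2} gives
$$4\lambda(1-\lambda)\,r(T^n)\le\|P_{\lambda}(T^n)\|\le\|T^n\|.$$
By the spectral mapping theorem, $r(T^n)=r(T)^n$. Hence
$$4\lambda(1-\lambda)\,r(T)^n\le\|P_{\lambda}(T^n)\|\le\|T^n\|.$$

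Next, take $n$-th roots:
$$\big(4\lambda(1-\lambda)\big)^{1/n}r(T)\le\|P_{\lambda}(T^n)\|^{1/n}\le\|T^n\|^{1/n}.$$
Because $\lambda\in(0,1)$, the constant $4\lambda(1-\lambda)$ lies in $(0,1]$, so its $n$-th root tends to $1$. This is exactly where the hypothesis $\lambda\ne0,1$ is used; at the endpoints $P_\lambda(T^n)$ is $T^n$ or $(T^n)^D$, a different situation. By Gelfand's formula, $\|T^n\|^{1/n}\to r(T)$.

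Finally, the squeeze theorem gives $\lim_{n\to\infty}\|P_{\lambda}(T^n)\|^{1/n}=r(T)$. The case $r(T)=0$ is covered by the same inequalities, since the lower bound is then $0$.

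No real obstacle is expected. The only points to check are that the lower bound is stated for $r(T^n)$ rather than $\|\Delta(T^n)\|$, which \eqref{normest2} already handles via $r=r\circ\Delta$, and that the constant $4\lambda(1-\lambda)$ is strictly positive.
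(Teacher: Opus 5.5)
Your proposal is correct and is the paper's argument: apply \eqref{normest2} to $T^n$ and use $r(T^n)=r(T)^n$ to get $4\lambda(1-\lambda)r(T)^n\le\|P_\lambda(T^n)\|\le\|T^n\|$, then squeeze with Gelfand's formula. One small correction to your aside: the positivity of $4\lambda(1-\lambda)$ is where this particular argument needs $\lambda\in(0,1)$, but the statement itself also holds at the endpoints. For $\lambda=0$ it is Gelfand's formula, and for $\lambda=1$ Jacobson's lemma gives $r((T^n)^D)=r(T^n)$, while $\|(T^n)^D\|\le\|T^n\|$, so the same squeeze applies.
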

\begin{proof}
	Let $n\in\mathbb{N}$. From \eqref{normest2} and $r(T^n)=(r(T))^n$ we obtain $$4\lambda(1-\lambda)(r(T))^n=4\lambda(1-\lambda)r(T^n)\le\|P_{\lambda}(T^n)\|\le\|T^n\|.$$ The desired result then follows from $r(T)=\lim\limits_{n\to\infty}\|T^n\|^{\frac{1}{n}}$.
\end{proof}
The next result shows that on a subset of $\mathbb{B}(\mathcal{H})$, the map $P_{\lambda}:T\mapsto P_{\lambda}(T)$ is not a scalar multiplication in most cases.
\begin{corollary}
	Let $\lambda\in(0,1)$ and $T\in\mathbb{B}(\mathcal{H})$. Then the following conditions are equivalent$\colon$
	\begin{itemize}
		\item[(i)] $P_{\lambda}(T)=cT$ for all $c\in (-\infty,(1-\lambda)^2)\bigcup(1,+\infty)$.
		\item[(ii)] $P_{\lambda}(T)=cT$ for some $c\in (-\infty,(1-\lambda)^2)\bigcup(1,+\infty)$.
			\item[(iii)] $P_{\lambda}(T)=f(\lambda)\Delta(T)$ for some function $f$ on $[0,1]$ satisfying $|f(\lambda)|<4\lambda(1-\lambda)$
		\item[(iv)] $T=0$.
	\end{itemize}
\end{corollary}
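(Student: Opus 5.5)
The plan is to prove the cycle (iv)$\Rightarrow$(i)$\Rightarrow$(ii)$\Rightarrow$(iv), and separately (iv)$\Leftrightarrow$(iii).

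\emph{Trivial implications.} If $T=0$, then $|T|=0$, so $T^D=0$, $\Delta(T)=0$ and $P_{\lambda}(T)=0$. Hence $P_\lambda(T)=cT$ for every $c$, which gives (i), and $P_\lambda(T)=f(\lambda)\Delta(T)$ for any $f$ (for instance $f\equiv 0$), which gives (iii). The implication (i)$\Rightarrow$(ii) is immediate.

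\emph{(ii)$\Rightarrow$(iv).} The tool is the norm estimate \eqref{normest2} from Theorem \ref{spectral}, used in two ranges of $c$.
\begin{itemize}
\item If $c>1$: from $P_\lambda(T)=cT$ we get $c\|T\|=\|P_\lambda(T)\|\le\|T\|$, so $(c-1)\|T\|\le 0$ and $T=0$.
\item If $c<(1-\lambda)^2$: use the quadratic form argument from Proposition \ref{kernel}. Let $T=U|T|$ and apply $U^\ast$ using \eqref{polarbasic}. The relation $U^\ast P_\lambda(T)=cU^\ast T=c|T|$ then reads
\[
(1-\lambda)^2|T|+\lambda^2U^\ast|T|U+2\lambda(1-\lambda)U^\ast|T|^{1/2}U|T|^{1/2}=c|T|.
\]
Pair with $x$ and set $u=|T|^{1/2}x$, $v=|T|^{1/2}Ux$. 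Since $U$ is isometric on $\overline{\mathcal R(|T|^{1/2})}$, we have $\|Uu\|=\|u\|$. Comparing real parts gives
\[
\|(1-\lambda)Uu+\lambda v\|^2=c\|u\|^2.
\]
The left side is $\ge 0$.
\end{itemize}

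\emph{The obstacle.} Nonnegativity alone only handles $c<0$. For $0\le c<(1-\lambda)^2$ something sharper is needed, and I expect this to be the main step. The first attempt is to expand $\|(1-\lambda)Uu+\lambda v\|^2=c\|u\|^2$ and try to bound $\operatorname{Re}\langle Uu,v\rangle$ from below by $0$, which would give $(1-\lambda)^2\|u\|^2\le c\|u\|^2$ and hence $u=0$. That lower bound is not available in general, so I fall back on spectral information instead:
\begin{itemize}
\item Take the inner product with $U^\ast U x$ rather than $x$, or pass to $\operatorname{Re}$ of the operator identity: $\operatorname{Re}\big(U^\ast P_\lambda(T)\big)=\big((1-\lambda)|T|^{1/2}+\lambda U^\ast|T|^{1/2}U\big)^2$, as in the proof of Theorem \ref{invertiblepmt}.
\item Since $\lambda U^\ast|T|^{1/2}U\ge 0$, this square dominates $(1-\lambda)^2|T|$ in the sense of comparing $\|Xx\|$ for $X=(1-\lambda)A+\lambda B\ge(1-\lambda)A$, where $A=|T|^{1/2}$ and $B=U^\ast|T|^{1/2}U$.
\item However, $X\ge Y\ge 0$ does not imply $X^2\ge Y^2$. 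The clean route is therefore $\langle Xx,x\rangle\ge(1-\lambda)\langle Ax,x\rangle$ combined with the eigen/approximate-eigen structure of $|T|$.
\end{itemize}
Concretely, restrict to the spectral subspace of $|T|$ near $\max\sigma(|T|)$, i.e.\ take approximate eigenvectors $x$ with $|T|x\approx\|T\|x$. If this does not close, compare with the case $\lambda=0$ and use $c\,r(T)\ge 4\lambda(1-\lambda)r(T)$-type reasoning from \eqref{normest2}.

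\emph{(iii)$\Rightarrow$(iv).} Suppose $P_\lambda(T)=f(\lambda)\Delta(T)$ with $|f(\lambda)|<4\lambda(1-\lambda)$. The left inequality of \eqref{thm2} gives
\[
4\lambda(1-\lambda)\|\Delta(T)\|\le\|P_\lambda(T)\|=|f(\lambda)|\,\|\Delta(T)\|,
\]
which forces $\|\Delta(T)\|=0$. Then $P_\lambda(T)=0$, and Proposition \ref{kernel} yields $T=0$.
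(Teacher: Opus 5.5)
Your arguments for $c>1$, for $c<0$, and for (iii)$\Rightarrow$(iv) are correct. In (iii) you finish with Proposition \ref{kernel} directly, whereas the paper passes through (ii) with $c=0$; the value $c=0$ is also settled by Proposition \ref{kernel}. The gap is the case $0<c<(1-\lambda)^2$, which is the actual content of the corollary. There you only list candidate strategies and stop at ``if this does not close'', so nothing is proved.

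Two of the candidates are also defective as written. The identity $\operatorname{Re}(U^\ast P_\lambda(T))=\bigl((1-\lambda)|T|^{1/2}+\lambda U^\ast|T|^{1/2}U\bigr)^2$ holds in the proof of Theorem \ref{invertiblepmt} only because $U$ is unitary there. In general the difference of the two sides is $\lambda^2U^\ast|T|^{1/2}(I-UU^\ast)|T|^{1/2}U\ge0$, so only ``$\ge$'' is true. The fallback through \eqref{normest2} yields only $4\lambda(1-\lambda)r(T)\le|c|\,\|T\|$, which says nothing when, e.g., $r(T)=0$.

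The missing idea is that your relation $(1-\lambda)^2|T|+\lambda^2U^\ast|T|U+2\lambda(1-\lambda)U^\ast|T|^{1/2}U|T|^{1/2}=c|T|$ is an operator identity, not just a statement about quadratic forms. Every term except the cross term is self-adjoint. Taking adjoints therefore gives $U^\ast|T|^{1/2}U|T|^{1/2}=|T|^{1/2}U^\ast|T|^{1/2}U$. So the positive operators $|T|^{1/2}$ and $U^\ast|T|^{1/2}U$ commute, and their product is positive. That is exactly the lower bound $\operatorname{Re}\langle Uu,v\rangle=\langle U^\ast|T|^{1/2}U|T|^{1/2}x,x\rangle\ge0$ that you dismissed: it fails for general $T$, but it holds under the hypothesis. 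Your expansion then reads $((1-\lambda)^2-c)\|u\|^2+\lambda^2\|v\|^2+2\lambda(1-\lambda)\operatorname{Re}\langle Uu,v\rangle=0$ with three nonnegative terms. Hence $|T|^{1/2}x=0$ for every $x$, and $T=0$. This is the paper's argument, phrased there as: the cross term is both $\ge0$ and $\le0$, hence zero, and then $((1-\lambda)^2-c)|T|+\lambda^2U^\ast|T|U=0$ forces $|T|=0$.

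Your own line can also be rescued, without approximate eigenvectors, by using the correct inequality. Set $X=(1-\lambda)|T|^{1/2}+\lambda U^\ast|T|^{1/2}U$; the hypothesis gives $c|T|\ge X^2$. Hence for unit $x$,
$\sqrt{c}\,\bigl\| |T|^{1/2}x\bigr\|\ge\|Xx\|\ge\langle Xx,x\rangle\ge(1-\lambda)\langle|T|^{1/2}x,x\rangle$.
Taking the supremum over unit $x$ gives $\sqrt{c}\,\|T\|^{1/2}\ge(1-\lambda)\|T\|^{1/2}$, i.e.\ $T=0$. Neither of these completions appears in the proposal.
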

\begin{proof}
	It is clear that (i) implies (ii). Moreover, (iv) immediately implies the others by the definition of $P_{\lambda}(T)$. We finish the proof by proving (iii) $\Rightarrow$ (ii) $\Rightarrow$ (iv).
	
	(iii) $\Rightarrow$ (ii): From \eqref{thm2} we have $|f(\lambda)|\|\Delta(T)\|\ge4\lambda(1-\lambda)\|\Delta(T)\|$, which forces $\|\Delta(T)\|=0$ and hence $\Delta(T)=0$. Then $P_{\lambda}(T)=f(\lambda)\Delta(T)=0=0\cdot\Delta(T)$, so (ii) follows immediately.
	
	(ii) $\Rightarrow$ (iv): The proof is divided into two cases depending on the value of $c$.
	
	Case 1: $c\in(1,+\infty)$. From \eqref{normest2} we have
	$$c\|T\|=\|cT\|=\|P_{\lambda}(T)\|\le\|T\|.$$
	The condition $c>1$ together with the above inequality directly implies $T=0$.
	
	Case 2: $c\in(-\infty,(1-\lambda)^2)$. Let $T=U|T|$ be the polar decomposition of $T$. Suppose $P_{\lambda}(T)=cT$, then we have $U^\ast (P_{\lambda}(T)-cT)=0$, which implies, by using the fact $U^\ast U=P_{\overline{\mathcal{R}(|T|)}}$, that\begin{equation}\label{zero1}
		((1-\lambda)^2-c)|T|+\lambda^2U^\ast|T|U+2\lambda(1-\lambda)U^\ast|T|^\frac{1}{2}U|T|^\frac{1}{2}=0.
	\end{equation}
	By taking the adjoint, we get\begin{equation}\label{zero2}
		((1-\lambda)^2-c)|T|+\lambda^2U^\ast|T|U+2\lambda(1-\lambda)|T|^\frac{1}{2}U^\ast|T|^\frac{1}{2}U=0.
	\end{equation}
	It follows from equations \eqref{zero1} and \eqref{zero2} that the positive operators $U^\ast|T|^\frac{1}{2}U$ and $|T|^\frac{1}{2}$ commute, which yields that $$U^\ast|T|^\frac{1}{2}U|T|^\frac{1}{2}\ge0.$$ On the other hand, using \eqref{zero1} together with the fact that $((1-\lambda)^2-c)|T|$ and $\lambda^2U^\ast|T|U$ are positive, we see that $$-U^\ast|T|^\frac{1}{2}U|T|^\frac{1}{2}=\frac{1}{2\lambda(1-\lambda)}\Big(((1-\lambda)^2-c)|T|+\lambda^2U^\ast|T|U\Big)\ge0.$$
	Then we deduce $$U^\ast|T|^\frac{1}{2}U|T|^\frac{1}{2}=0.$$ Substituting the above equation into \eqref{zero1} gives $((1-\lambda)^2-c)|T|+\lambda^2U^\ast|T|U=0$. We conclude that $|T|=0$ from the fact that $((1-\lambda)^2-c)|T|$ and $\lambda^2U^\ast|T|U$ are positive operators. Hence, $T=U|T|=0$. 
	
	The proof is complete.
\end{proof}
\begin{remark}
	(i) If $\lambda=0$, then $P_{\lambda}(T)=P_{0}(T)=T$; hence this case is trivial. If $\lambda=1$, then (ii) $\Rightarrow$ (iv) does not hold in general. Indeed, from Remark \ref{duggal} we have $P_{1}(T)=T^D=0=0\cdot T$, but $T\ne0$.
	
	(ii) The restriction $|f(\lambda)|<4\lambda(1-\lambda)$ in condition (iii) cannot be omitted. For example, take $f(\lambda)=1$ and $\lambda=\frac{1}{2}$; then clearly there exists a nonzero quasinormal operator satisfying $P_{\frac{1}{2}}(T)=\Delta(T)$.
	
	(iii) If $c\in[(1-\lambda)^2,1]$, then condition (ii) does not generally imply (iv). For the case $c=(1-\lambda)^2$, consider the matrix $T$ defined in Remark \ref{duggal}. A direct computation gives $$P_{\lambda}(T)=(1-\lambda)^2T+\lambda^2T^D+2\lambda(1-\lambda)\Delta(T)=(1-\lambda)^2T,$$but $T\ne0$. For the case $c\in((1-\lambda)^2,1]$. Let $\alpha\equiv\{\alpha_{n}\}_{n\in\mathbb{N}}$ be a sequence such that $\alpha_{n+1}=\Big(\frac{\sqrt{c}-(1-\lambda)}{\lambda}\Big)^2\alpha_{n}$ and $\alpha_{0}=1$. Note that the sequence is bounded since the common ratio $\Big(\frac{\sqrt{c}-(1-\lambda)}{\lambda}\Big)^2\le1$. Then we consider the unilateral weighted shift $W_{\alpha}$ induced by the sequence $\{\alpha_{n}\}$. From \eqref{uni} we have
	\begin{align*}
		P_{\lambda}(W_{\alpha})e_{n}&=((1-\lambda)\sqrt{\alpha_{n}}+\lambda\sqrt{\alpha_{n+1}})^2e_{n+1}\\&=((1-\lambda)\sqrt{\alpha_{n}}+\lambda\frac{\sqrt{c}-(1-\lambda)}{\lambda}\sqrt{\alpha_{n}})^2e_{n+1}\\&=c\alpha_{n}e_{n+1}=cW_{\alpha}e_{n}
	\end{align*}
	(that is, $P_{\lambda}(W_{\alpha})=cW_{\alpha}$). However, $W_{\alpha}\ne0$.
\end{remark}

Let $x,y\in\mathcal{H}$. Then the at most rank one operator $x\otimes y$ is defined by $(x\otimes y)z=\langle z,y\rangle z$ for $z\in\mathcal{H}$. One can easily see
\begin{equation}\label{normrankone}
	\|x\otimes y\|=\|x\|\|y\|.
\end{equation}
The power mean transform of $x\otimes y$ yields an improvement of the Cauchy-Schwarz inequality.
\begin{corollary}
Let $x,y\in\mathcal{H}$ with $y\ne0$. Then 
\begin{align*}
|\langle x,y\rangle|\le\min\limits_{t\in[0,1]}\|tx+(1-t)\frac{\langle x,y\rangle}{\|y\|^2}y\|\|y\|&\le\min\limits_{t\in[0,1]}\Big(\|(1-t)^2x+t^2\frac{\langle x,y\rangle}{\|y\|^2}y\|\|y\|+2t(1-t)|\langle x,y\rangle|\Big)\\&\le\|x\|\|y\|
\end{align*}
\end{corollary}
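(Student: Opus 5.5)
We will compute the power mean transform of the rank-one operator $T=x\otimes y$ explicitly and then apply Theorem \ref{spectral}. The rank-one operator is read as $(x\otimes y)z=\langle z,y\rangle x$; the right-hand side $z$ in the paper's definition is a typo. If $x=0$, every term in the chain vanishes. So assume $x\neq 0$ and set $e=y/\|y\|$.

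\textbf{Step 1: the polar decomposition.} We have $T^\ast T=\|x\|^2\, y\otimes y$, so
\[
|T|=\frac{\|x\|}{\|y\|}\,y\otimes y \qquad\text{and}\qquad U=\frac{x}{\|x\|}\otimes\frac{y}{\|y\|}.
\]
Indeed, $U|T|=\frac{1}{\|y\|^2}(x\otimes y)(y\otimes y)=x\otimes y$, and $U$ is a partial isometry with $\mathcal{N}(U)=\{y\}^\perp=\mathcal{N}(T)$. The square root is
\[
|T|^{1/2}=\sqrt{\|x\|\|y\|}\; e\otimes e .
\]

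\textbf{Step 2: the three pieces of $P_t(T)$.}

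For the Duggal transform,
\[
T^D=|T|U=\frac{\|x\|}{\|y\|}(y\otimes y)\Big(\frac{x}{\|x\|}\otimes e\Big)=\frac{\langle x,y\rangle}{\|y\|^2}\,y\otimes y .
\]

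For the Aluthge transform,
\[
\Delta(T)=|T|^{1/2}U|T|^{1/2}=\|x\|\|y\|\,(e\otimes e)\Big(\frac{x}{\|x\|}\otimes e\Big)(e\otimes e)=\langle x,e\rangle\,\|y\|\,e\otimes e=\frac{\langle x,y\rangle}{\|y\|^2}\,y\otimes y .
\]
So $\Delta(T)=T^D$ in this case. This is the key simplification: both transforms equal $z\otimes y$ with $z=\frac{\langle x,y\rangle}{\|y\|^2}y$.

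Consequently, using \eqref{normrankone}, we get for each $t\in[0,1]$:
\[
P_t(T)=\big((1-t)^2x+t^2z\big)\otimes y+2t(1-t)\,z\otimes y,
\]
\[
M_t(T)=\big(tx+(1-t)z\big)\otimes y,
\qquad
\|\Delta(T)\|=\|z\|\|y\|=|\langle x,y\rangle| .
\]

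\textbf{Step 3: the first inequality.} The mean transform is always at least the spectral radius: $\|M_t(T)\|\ge r(M_t(T))$. The only nonzero spectral value of the rank-one operator $w\otimes y$ is $\langle w,y\rangle$. Here
\[
\langle tx+(1-t)z,\,y\rangle=\langle x,y\rangle .
\]
This gives $|\langle x,y\rangle|\le \|tx+(1-t)z\|\|y\|$ for every $t$, hence also for the minimum. The minimum is attained because the expression is continuous in $t$ on a compact interval.

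\textbf{Step 4: the middle inequality.} Rather than invoking Theorem \ref{spectral} with reparametrization, we argue directly by the triangle inequality. Expand $(1-t)^2x+t^2z+2t(1-t)z$ and note that $(1-t)^2+t^2+2t(1-t)=1$. Writing $s=(1-t)^2$, this vector equals
\[
s\,x+(1-s)\,z .
\]
As $t$ ranges over $[0,1]$, $s$ ranges over $[0,1]$. Hence, for each $t$,
\[
\min_{s\in[0,1]}\|sx+(1-s)z\|\|y\|\le \|(1-t)^2x+t^2z\|\|y\|+2t(1-t)\|z\|\|y\| ,
\]
and $\|z\|\|y\|=|\langle x,y\rangle|$. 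Taking the minimum over $t$ gives the middle inequality.

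\textbf{Step 5: the last inequality.} Evaluate the middle expression at $t=0$: it equals $\|x\|\|y\|$, so the minimum is at most $\|x\|\|y\|$.

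The main obstacle is the bookkeeping in Step 2: getting the polar factors of $x\otimes y$ right, and noticing that $T^D=\Delta(T)$ collapses everything to vectors of the form $(\cdot)\otimes y$. Once that is in place, the remaining steps are elementary.
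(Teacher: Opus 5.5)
Your proof is correct and is essentially the paper's: you compute $P_t(x\otimes y)=\big((1-t)^2x+(1-(1-t)^2)z\big)\otimes y$ with $z=\frac{\langle x,y\rangle}{\|y\|^2}y$, and you get the middle inequality from the triangle inequality behind the upper bound in Theorem~\ref{spectral}, together with the substitution $s=(1-t)^2$. The differences are cosmetic: for the first inequality you use $r(w\otimes y)=|\langle w,y\rangle|\le\|w\|\|y\|$ with $w=tx+(1-t)z$ and $\langle w,y\rangle=\langle x,y\rangle$ (i.e.\ Cauchy--Schwarz), whereas the paper writes $x=\alpha y+w'$ with $w'\perp y$ and uses Pythagoras; for the last inequality you simply evaluate at $t=0$.
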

\begin{proof}
Let $\lambda\in[0,1]$. From \cite[Proposition 2.1]{chabbabi-jmaa-2017} and \cite[Theorem 4.19]{zamani-jmaa-2021} we have $$\Delta(x\otimes y)=\Big(\frac{\langle x,y\rangle}{\|y\|^2}y\Big)\otimes y\ \text{and}\ M_{\frac{(1-\lambda)^2}{(1-\lambda)^2+\lambda^2}}(x\otimes y)=\Big(\frac{(1-\lambda)^2}{(1-\lambda)^2+\lambda^2}x+\frac{\lambda^2}{(1-\lambda)^2+\lambda^2}\frac{\langle x,y\rangle}{\|y\|^2}y\Big)\otimes y.$$ Consequently from \eqref{exp} we obtain
\begin{equation}\label{rankpower}
P_{\lambda}(x\otimes y)=((1-\lambda)^2x+(1-(1-\lambda)^2)\frac{\langle x,y\rangle}{\|y\|^2}y)\otimes y.
\end{equation}
Together with \eqref{normrankone}, the above result yields $\|\Delta(x\otimes y)\|=|\langle x,y\rangle|$, $$(1-2\lambda+2\lambda^2)\|M_{\frac{(1-\lambda)^2}{(1-\lambda)^2+\lambda^2}}(x\otimes y)\|=\|(1-\lambda)^2x+\lambda^2\frac{\langle x,y\rangle}{\|y\|^2}y\|\|y\|$$and $$\|P_{\lambda}(x\otimes y)\|=\|(1-\lambda)^2x+(1-(1-\lambda)^2)\frac{\langle x,y\rangle}{\|y\|^2}y\|\|y\|.$$
After substituting the above expressions into Theorem \ref{spectral} and then using the identity $$\min\limits_{\lambda\in[0,1]}\|P_{\lambda}(x\otimes y)\|=\min\limits_{t\in[0,1]}\|tx+(1-t)\frac{\langle x,y\rangle}{\|y\|^2}y\|\|y\|,$$ we obtain the last two inequalities in this corollary. Now, if we can prove that 
\begin{equation}\label{coroineq}
	|\langle x,y\rangle|\le\|tx+(1-t)\frac{\langle x,y\rangle}{\|y\|^2}y\|\|y\|
\end{equation}for all $t\in[0,1]$, then the first inequality holds, and the proof is complete. Indeed, by orthogonal decomposition we can write $x=\alpha y+z$, where $\langle y,z\rangle=0$. Hence, it follows readily that \eqref{coroineq} is equivalent to $|\alpha|\|y\|\le\|\alpha y+tz\|$, and the latter is obviously true because $\langle\alpha y,tz\rangle=0$.
\end{proof}
\begin{remark}
 Let $\mathcal{H}=\mathbb{C}^2$, $x=(1,0)$ and $y=(1,1)$. Then a direct computation yields $|\langle x,y\rangle|=1$, $\|x\|\|y\|=\sqrt{2}$, $\min\limits_{t\in[0,1]}\|tx+(1-t)\frac{\langle x,y\rangle}{\|y\|^2}y\|\|y\|=\frac{\sqrt{2}}{2}\min\limits_{t\in[0,1]}\sqrt{2t^2+2}=1$
and
\begin{align*}
&\min\limits_{t\in[0,1]}\Big(\|(1-t)^2x+t^2\frac{\langle x,y\rangle}{\|y\|^2}y\|\|y\|+2t(1-t)|\langle x,y\rangle|\Big)\\&=\min\limits_{t\in[0,1]}\Big(\sqrt{(1-2t+2t^2)^2+(1-t)^4}+2t(1-t)\Big)=1.
\end{align*}
The last equality holds because $$\sqrt{(1-2t+2t^2)^2+(1-t)^4}+2t(1-t)\ge\sqrt{(1-2t+2t^2)^2}+2t(1-t)=1$$ and by taking $t=1$. This simple example illustrates the above corollary.
\end{remark}
Theorem \ref{spectral} together with the theorem and lemma below provides a characterization of $\|P_{\lambda}(T)\|=\|T\|$ using the numerical range.
\begin{theorem}\cite[Theorem 2.1]{B.B}\label{daugavet1} Let $A,B\in\mathbb{B}(\mathcal{H})$. Then $\|A+B\|=\|A\|+\|B\|$ holds if and only if $\|A\|\|B\|\in\overline{W(A^\ast B)}$.
\end{theorem}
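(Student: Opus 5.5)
We argue directly with a norming sequence of unit vectors, expanding $\|(A+B)x\|^2$ and using Cauchy--Schwarz. The whole proof rests on the identity
\[
\|(A+B)x\|^2=\|Ax\|^2+\|Bx\|^2+2\operatorname{Re}\langle A^\ast Bx,x\rangle ,
\]
where $\langle Bx,Ax\rangle=\langle A^\ast Bx,x\rangle$ and $|\langle A^\ast Bx,x\rangle|\le\|Ax\|\|Bx\|\le\|A\|\|B\|$ for unit $x$. If $A=0$ or $B=0$, both conditions hold trivially, because $0\in\overline{W(0)}$. So assume $A\neq0$ and $B\neq0$.

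\emph{Necessity.} Suppose $\|A+B\|=\|A\|+\|B\|$, and choose unit vectors $x_n$ with $\|(A+B)x_n\|\to\|A\|+\|B\|$.
\begin{itemize}
\item From $\|(A+B)x_n\|\le\|Ax_n\|+\|Bx_n\|$ and $\|Ax_n\|\le\|A\|$, $\|Bx_n\|\le\|B\|$, we get $\|Ax_n\|\to\|A\|$ and $\|Bx_n\|\to\|B\|$.
\item Feeding these limits into the identity gives $\operatorname{Re}\langle A^\ast Bx_n,x_n\rangle\to\|A\|\|B\|$.
\item Since also $|\langle A^\ast Bx_n,x_n\rangle|\le\|A\|\|B\|$, the imaginary parts must tend to $0$.
\end{itemize}
Hence $\langle A^\ast Bx_n,x_n\rangle\to\|A\|\|B\|$, that is, $\|A\|\|B\|\in\overline{W(A^\ast B)}$.

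\emph{Sufficiency.} Suppose $\langle A^\ast Bx_n,x_n\rangle\to\|A\|\|B\|$ for some unit vectors $x_n$.
\begin{itemize}
\item We have $\|A\|\|B\|\leftarrow|\langle A^\ast Bx_n,x_n\rangle|\le\|Ax_n\|\|Bx_n\|\le\|A\|\|B\|$. Since both factors are bounded by $\|A\|$ and $\|B\|$, which are nonzero, this squeeze forces $\|Ax_n\|\to\|A\|$ and $\|Bx_n\|\to\|B\|$.
\item The identity then gives $\|(A+B)x_n\|^2\to\|A\|^2+\|B\|^2+2\|A\|\|B\|=(\|A\|+\|B\|)^2$, so $\|A+B\|\ge\|A\|+\|B\|$.
\item The reverse inequality is the triangle inequality.
\end{itemize}

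The only delicate point is deducing from the product limit that each factor individually attains its norm. This is where $A,B\neq0$ is needed: if $\|Ax_n\|\le\|A\|-\varepsilon$ along a subsequence, then $\|Ax_n\|\|Bx_n\|\le(\|A\|-\varepsilon)\|B\|<\|A\|\|B\|$, a contradiction. The same argument applies to $B$.
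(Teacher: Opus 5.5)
Your proof is correct and complete: you expand $\|(A+B)x\|^2=\|Ax\|^2+\|Bx\|^2+2\operatorname{Re}\langle A^\ast Bx,x\rangle$ along a norming sequence, use the squeeze $|\langle A^\ast Bx,x\rangle|\le\|Ax\|\|Bx\|\le\|A\|\|B\|$ in both directions, and correctly identify that $A,B\neq0$ is needed only to pass from the product limit to the individual limits in the sufficiency part. The paper does not prove this statement itself but quotes it from \cite{B.B}, and your argument is essentially the same direct one used there.
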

\begin{lemma}\cite[Lemma 2.1]{Abr-JFA-1991}\label{daugavet2} In a normed space, if vectors $u$ and $v$ satisfy $\|u+v\|=\|u\|+\|v\|$, then the same property holds for any non-negative scalars $\alpha$ and $\beta$ in the equation $\|\alpha u+\beta v\|=\|\alpha u\|+\|\beta v\|$.
\end{lemma}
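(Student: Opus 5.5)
The plan is to prove the lemma directly from the triangle inequality, by writing $\alpha u+\beta v$ as a multiple of $u+v$ minus a correction term. The triangle inequality already gives $\|\alpha u+\beta v\|\le\|\alpha u\|+\|\beta v\|=\alpha\|u\|+\beta\|v\|$ for all $\alpha,\beta\ge0$. So only the reverse inequality needs proof.

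By the symmetry of the hypothesis and the conclusion in $(u,\alpha)$ and $(v,\beta)$, I will assume $\alpha\ge\beta\ge0$. The other case follows by interchanging the roles of $u$ and $v$. The key identity is
\begin{equation*}
\alpha u+\beta v=\alpha(u+v)-(\alpha-\beta)v .
\end{equation*}
Since $\alpha-\beta\ge0$, the reverse triangle inequality and the hypothesis $\|u+v\|=\|u\|+\|v\|$ give
\begin{equation*}
\|\alpha u+\beta v\|\ge\alpha\|u+v\|-(\alpha-\beta)\|v\|=\alpha\|u\|+\alpha\|v\|-(\alpha-\beta)\|v\|=\alpha\|u\|+\beta\|v\|.
\end{equation*}
This is exactly $\|\alpha u\|+\|\beta v\|$, because $\alpha,\beta\ge0$.

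Combining the two inequalities yields $\|\alpha u+\beta v\|=\|\alpha u\|+\|\beta v\|$. The degenerate cases $\alpha=0$ or $\beta=0$ are trivially true and are in any case covered by the same computation.

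The only step that needs care is choosing the decomposition so that the subtracted coefficient $\alpha-\beta$ is non‑negative. This is why the argument splits on whether $\alpha\ge\beta$ or $\beta\ge\alpha$. In the second case I will use $\alpha u+\beta v=\beta(u+v)-(\beta-\alpha)u$ in the same way.
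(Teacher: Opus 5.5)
Your proof is correct and complete. The triangle inequality gives the upper bound. For $\alpha\ge\beta\ge0$, the decomposition $\alpha u+\beta v=\alpha(u+v)-(\alpha-\beta)v$, combined with the reverse triangle inequality and the hypothesis, gives the matching lower bound; the case $\beta\ge\alpha$ is symmetric.

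The paper gives no proof of its own here; it simply quotes the lemma from \cite{Abr-JFA-1991}. Your argument is the standard elementary proof of this fact, so there is nothing to add.
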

\begin{corollary}
Let $\lambda\in(0,1)$ and $T\in\mathbb{B}(\mathcal{H})$. Then the following conditions are equivalent$\colon$
\begin{itemize}
\item[(i)] $\|P_{\lambda}(T)\|=\|T\|$.
\item[(ii)] $\|T\|^2\in\overline{W\Big((M_{\frac{(1-\lambda)^2}{(1-\lambda)^2+\lambda^2}}(T))^\ast\Delta(T)\Big)}$.
\end{itemize}
\end{corollary}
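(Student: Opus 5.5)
Write $t=\frac{(1-\lambda)^2}{(1-\lambda)^2+\lambda^2}$, $A=(1-2\lambda+2\lambda^2)M_{t}(T)$ and $B=(2\lambda-2\lambda^2)\Delta(T)$. By \eqref{exp}, $P_{\lambda}(T)=A+B$. We also record the elementary bounds
$$\|M_t(T)\|\le t\|T\|+(1-t)\|T^D\|\le\|T\|,\qquad \|\Delta(T)\|\le\|T\|.$$
The plan is to squeeze $\|P_{\lambda}(T)\|$ between these quantities, and then to recognise (ii) as the Daugavet-type condition of Theorem \ref{daugavet1} applied to the pair $M_t(T)$, $\Delta(T)$.

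(i)$\Rightarrow$(ii). Assume $\|P_{\lambda}(T)\|=\|T\|$. Theorem \ref{spectral} and the bounds above give
$$\|T\|=\|A+B\|\le\|A\|+\|B\|\le(1-2\lambda+2\lambda^2)\|T\|+(2\lambda-2\lambda^2)\|T\|=\|T\|.$$
So all inequalities are equalities. Since $\lambda\in(0,1)$, both coefficients are positive, and we conclude
$$\|M_t(T)\|=\|T\|,\qquad \|\Delta(T)\|=\|T\|,\qquad \|A+B\|=\|A\|+\|B\|.$$
Lemma \ref{daugavet2}, applied with the positive scalars $1/(1-2\lambda+2\lambda^2)$ and $1/(2\lambda-2\lambda^2)$, then yields $\|M_t(T)+\Delta(T)\|=\|M_t(T)\|+\|\Delta(T)\|$. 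Theorem \ref{daugavet1} now gives $\|M_t(T)\|\|\Delta(T)\|=\|T\|^2\in\overline{W(M_t(T)^\ast\Delta(T))}$, which is (ii).

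(ii)$\Rightarrow$(i). Take unit vectors $x_n$ with $\langle M_t(T)^\ast\Delta(T)x_n,x_n\rangle\to\|T\|^2$. Then
$$\|T\|^2\leftarrow|\langle \Delta(T)x_n,M_t(T)x_n\rangle|\le\|\Delta(T)\|\|M_t(T)\|\le\|T\|^2.$$
This forces $\|M_t(T)\|\|\Delta(T)\|=\|T\|^2$, and since each factor is at most $\|T\|$, both equal $\|T\|$. Hence (ii) reads $\|M_t(T)\|\|\Delta(T)\|\in\overline{W(M_t(T)^\ast\Delta(T))}$. Theorem \ref{daugavet1} then gives $\|M_t(T)+\Delta(T)\|=\|M_t(T)\|+\|\Delta(T)\|$, and Lemma \ref{daugavet2} rescales this to $\|A+B\|=\|A\|+\|B\|=\|T\|$. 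So $\|P_{\lambda}(T)\|=\|T\|$.

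The trivial case $T=0$ is consistent with both statements: (ii) reads $0\in\overline{W(0)}$. The only step needing care is the first direction, where we must extract $\|M_t(T)\|=\|\Delta(T)\|=\|T\|$ so that the product in Theorem \ref{daugavet1} becomes exactly $\|T\|^2$. This is handled by the positivity of the two coefficients when $\lambda\in(0,1)$.
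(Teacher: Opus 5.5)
Your proof is correct and follows the paper's argument essentially step for step. You squeeze $\|P_{\lambda}(T)\|$ via Theorem~\ref{spectral} and the bounds $\|M_t(T)\|,\|\Delta(T)\|\le\|T\|$ to force $\|M_t(T)\|=\|\Delta(T)\|=\|T\|$, then pass between $\|A+B\|=\|A\|+\|B\|$ and the numerical-range condition using Lemma~\ref{daugavet2} and Theorem~\ref{daugavet1}; the only cosmetic difference is that in (ii)$\Rightarrow$(i) you use an approximating sequence of unit vectors, whereas the paper phrases the same estimate through $\omega(\cdot)$ and \eqref{spnuno}.
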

\begin{proof}
(i) $\Rightarrow$ (ii): From Theorem \ref{spectral} we have $$\|P_{\lambda}(T)\|=(1-2\lambda+2\lambda^2)\Big\|M_{\frac{(1-\lambda)^2}{(1-\lambda)^2+\lambda^2}}(T)\Big\|+(2\lambda-2\lambda^2)\|\Delta(T)\|=\|T\|.$$
Then $\Big\|M_{\frac{(1-\lambda)^2}{(1-\lambda)^2+\lambda^2}}(T)\Big\|=\|\Delta(T)\|=\|T\|$ since $\Big\|M_{\frac{(1-\lambda)^2}{(1-\lambda)^2+\lambda^2}}(T)\Big\|\le\|T\|$ and $\|\Delta(T)\|\le\|T\|$. Moreover, $\Big\|M_{\frac{(1-\lambda)^2}{(1-\lambda)^2+\lambda^2}}(T)+\Delta(T)\Big\|=\Big\|M_{\frac{(1-\lambda)^2}{(1-\lambda)^2+\lambda^2}}(T)\Big\|+\|\Delta(T)\|$ follows from Lemma \ref{daugavet2}. Consequently, by Theorem \ref{daugavet1} we obtain$$\|T\|^2=\Big\|M_{\frac{(1-\lambda)^2}{(1-\lambda)^2+\lambda^2}}(T)\Big\|\|\Delta(T)\|\in\overline{W\Big((M_{\frac{(1-\lambda)^2}{(1-\lambda)^2+\lambda^2}}(T))^\ast\Delta(T)\Big)}.$$
(ii) $\Rightarrow$ (i): Since $\|T\|^2\in\overline{W\Big((M_{\frac{(1-\lambda)^2}{(1-\lambda)^2+\lambda^2}}(T))^\ast\Delta(T)\Big)}$, it follows from \eqref{spnuno} that
\begin{small}
\begin{equation*}
	\|T\|^2\le\omega\Big((M_{\frac{(1-\lambda)^2}{(1-\lambda)^2+\lambda^2}}(T))^\ast\Delta(T)\Big)\le\Big\|(M_{\frac{(1-\lambda)^2}{(1-\lambda)^2+\lambda^2}}(T))^\ast\Delta(T)\Big\|\le\Big\|M_{\frac{(1-\lambda)^2}{(1-\lambda)^2+\lambda^2}}(T)\Big\|\|\Delta(T)\|\le\|T\|^2.
\end{equation*}
\end{small}Hence,
\begin{equation}\label{meandeltaeq}
\Big\|M_{\frac{(1-\lambda)^2}{(1-\lambda)^2+\lambda^2}}(T)\Big\|=\|\Delta(T)\|=\|T\|
\end{equation}
 and $ \Big\|M_{\frac{(1-\lambda)^2}{(1-\lambda)^2+\lambda^2}}(T)\Big\|\|\Delta(T)\|=\|T\|^2\in\overline{W\Big((M_{\frac{(1-\lambda)^2}{(1-\lambda)^2+\lambda^2}}(T))^\ast\Delta(T)\Big)}$. The latter implies, by Theorem \ref{daugavet1}, that $\Big\|M_{\frac{(1-\lambda)^2
 	}{(1-\lambda)^2+\lambda^2}}(T)+\Delta(T)\Big\|=\Big\|M_{\frac{(1-\lambda)^2}{(1-\lambda)^2+\lambda^2}}(T)\Big\|+\|\Delta(T)\|$. The desired result follows from Lemma \ref{daugavet2} and \eqref{meandeltaeq}.
\end{proof}
\begin{remark}
If $\lambda=0$, then $P_{\lambda}(T)=P_{0}(T)=T$; hence this case is trivial. If $\lambda=1$, then the above corollary does not hold in general. Indeed, let $T=U|T|=\begin{bmatrix}
0&1\\1&0
\end{bmatrix}\begin{bmatrix}
1&0\\0&2
\end{bmatrix}=\begin{bmatrix}
	0&2\\1&0
\end{bmatrix}.$ Then $P_{1}(T)=M_{0}(T)=T^D=\begin{bmatrix}
0&1\\2&0
\end{bmatrix}$ and hence $\|T^D\|=\|T\|=2$. Set $x=(a,b)\in\mathbb{C}^2$ such that $|a|^2+|b|^2=1$. A direct computation gives $$\langle\Delta(T)x,T^Dx\rangle=\langle(\sqrt{2}b,a),(b,2a)\rangle=\sqrt{2}|b|^2+2\sqrt{2}|a|^2=\sqrt{2}+\sqrt{2}|a|^2\le2\sqrt{2}$$ since $\Delta(T)=\begin{bmatrix}
	0&\sqrt{2}\\\sqrt{2}&0
\end{bmatrix}$. Clearly, $4\notin\overline{W((T^D)^\ast\Delta(T))}$, as desired.
\end{remark}

As in the above argument, we also present the following.
\begin{corollary}
Let $\lambda\in(0,1)$ and $T\in\mathbb{B}(\mathcal{H})$. Then the following conditions are equivalent$\colon$
\begin{itemize}
	\item[(i)] $\|P_{\lambda}(T)\|=2(1-2\lambda+2\lambda^2)\Big\|M_{\frac{(1-\lambda)^2}{(1-\lambda)^2+\lambda^2}}(T)\Big\|$.
	\item[(ii)] $\frac{1-2\lambda+2\lambda^2}{2\lambda-2\lambda^2}\Big\|M_{\frac{(1-\lambda)^2}{(1-\lambda)^2+\lambda^2}}(T)\Big\|^2\in\overline{W\Big((M_{\frac{(1-\lambda)^2}{(1-\lambda)^2+\lambda^2}}(T))^\ast\Delta(T)\Big)}$.
\end{itemize}
In particular, $\|P(T)\|=\|M(T)\|$ if and only if  $\|M(T)\|^2\in\overline{W((M(T))^\ast\Delta(T))}$.
\end{corollary}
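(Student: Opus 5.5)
Write $a=1-2\lambda+2\lambda^2$, $b=2\lambda-2\lambda^2$ (so $a+b=1$, $a\ge b>0$ for $\lambda\in(0,1)$), $M=M_{\frac{(1-\lambda)^2}{(1-\lambda)^2+\lambda^2}}(T)$ and $\Delta=\Delta(T)$, so that $P_{\lambda}(T)=aM+b\Delta$ by \eqref{exp}. The plan mirrors the previous corollary. The key inequality is \eqref{maxnorm}, $b\|\Delta\|\le a\|M\|$, combined with the triangle inequality:
\[
\|P_{\lambda}(T)\|\le a\|M\|+b\|\Delta\|\le 2a\|M\|.
\]
Thus (i) holds if and only if both inequalities are equalities, i.e. $\|aM+b\Delta\|=\|aM\|+\|b\Delta\|$ and $b\|\Delta\|=a\|M\|$.

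For (i) $\Rightarrow$ (ii): from the two equalities, Lemma \ref{daugavet2} (applied with scalars $1/a,1/b$) gives $\|M+\Delta\|=\|M\|+\|\Delta\|$. Theorem \ref{daugavet1} then yields $\|M\|\|\Delta\|\in\overline{W(M^\ast\Delta)}$. Substituting $\|\Delta\|=\frac{a}{b}\|M\|$ gives exactly $\frac{a}{b}\|M\|^2\in\overline{W(M^\ast\Delta)}$.

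For (ii) $\Rightarrow$ (i): from (ii) and \eqref{spnuno},
\[
\tfrac{a}{b}\|M\|^2\le\omega(M^\ast\Delta)\le\|M\|\|\Delta\|\le \tfrac{a}{b}\|M\|^2,
\]
where the last step is \eqref{maxnorm}. Hence all of these are equalities. If $M=0$, then (i) holds trivially, since $\|\Delta\|\le\frac ab\|M\|=0$ gives $P_\lambda(T)=0$. Otherwise we get $\|\Delta\|=\frac ab\|M\|$, and $\|M\|\|\Delta\|\in\overline{W(M^\ast\Delta)}$. Theorem \ref{daugavet1} then gives $\|M+\Delta\|=\|M\|+\|\Delta\|$, and Lemma \ref{daugavet2} upgrades this to $\|aM+b\Delta\|=a\|M\|+b\|\Delta\|=2a\|M\|$, which is (i).

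For the particular case $\lambda=\frac12$: here $a=b=\frac12$ and $M_{1/2}(T)=M(T)$, so (i) reads $\|P(T)\|=\|M(T)\|$ and (ii) reads $\|M(T)\|^2\in\overline{W(M(T)^\ast\Delta(T))}$.

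The only delicate point is recognizing that \eqref{maxnorm} forces the chain of inequalities to collapse. Beyond that, care is needed with the degenerate case $M=0$ and with applying Lemma \ref{daugavet2} correctly in both directions.
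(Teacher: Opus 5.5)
Your proof is correct and follows the paper's argument. In both directions, \eqref{maxnorm} collapses the chain $\|P_{\lambda}(T)\|\le a\|M\|+b\|\Delta\|\le 2a\|M\|$, and Theorem \ref{daugavet1} and Lemma \ref{daugavet2} do the rest. The only difference is cosmetic: in (ii)$\Rightarrow$(i) the paper closes the chain with the arithmetic--geometric mean inequality $ab\|M\|\|\Delta\|\le\frac{1}{2}\bigl(a^2\|M\|^2+b^2\|\Delta\|^2\bigr)\le a^2\|M\|^2$, which gives $a\|M\|=b\|\Delta\|$ without dividing by $\|M\|$; you use \eqref{maxnorm} directly and so rightly treat $M=0$ separately.
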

\begin{proof}
	For convenience, we write $R=(1-2\lambda+2\lambda^2)M_{\frac{(1-\lambda)^2}{(1-\lambda)^2+\lambda^2}}(T)$ and $S=(2\lambda-2\lambda^2)\Delta(T)$. From \eqref{maxnorm}, we have $\|R\|\ge\|S\|$.
	
(i) $\Rightarrow$ (ii): From \eqref{thm2} we get $\|P_{\lambda}(T)\|\le\|R\|+\|S\|\le2\|R\|$; hence, by the assumption, $\|P_{\lambda}(T)\|=\|R\|+\|S\|$ and $\|R\|=\|S\|$. Then, using Theorem \ref{daugavet1} and Lemma \ref{daugavet2} we obtain 
\begin{equation}\label{normequality}
\Big\|M_{\frac{(1-\lambda)^2}{(1-\lambda)^2+\lambda^2}}(T)\Big\|\|\Delta(T)\|\in\overline{W\Big((M_{\frac{(1-\lambda)^2}{(1-\lambda)^2+\lambda^2}}(T))^\ast\Delta(T)\Big)}.
\end{equation}
Thus, (ii) follows from \eqref{spnuno} and $\Big\|M_{\frac{(1-\lambda)^2}{(1-\lambda)^2+\lambda^2}}(T)\Big\|\|\Delta(T)\|=\frac{\|R\|\|S\|}{(1-2\lambda+2\lambda^2)(2\lambda-2\lambda^2)}=\frac{\|R\|^2}{(1-2\lambda+2\lambda^2)(2\lambda-2\lambda^2)}$.

(ii) $\Rightarrow$ (i): 

From the assumption and the arithmetic-geometric mean inequality we obtain
\begin{align*}
\frac{\|R\|^2}{(1-2\lambda+2\lambda^2)(2\lambda-2\lambda^2)}&\le\omega\Big((M_{\frac{(1-\lambda)^2}{(1-\lambda)^2+\lambda^2}}(T))^\ast\Delta(T)\Big)\\&\le\Big\|M_{\frac{(1-\lambda)^2}{(1-\lambda)^2+\lambda^2}}(T)\Big\|\|\Delta(T)\|=\frac{\|R\|\|S\|}{(1-2\lambda+2\lambda^2)(2\lambda-2\lambda^2)}\\&\le\frac{\|R\|^2+\|S\|^2}{2(1-2\lambda+2\lambda^2)(2\lambda-2\lambda^2)}\le\frac{2\|R\|^2}{2(1-2\lambda+2\lambda^2)(2\lambda-2\lambda^2)},
\end{align*}
which yields \eqref{normequality} and $\|R\|=\|S\|$. Then, using Theorem \ref{daugavet1} and Lemma \ref{daugavet2} we have $$\|P_{\lambda}(T)\|=\|R\|+\|S\|=2\|R\|,$$
as desired.

The `in particular' part follows from $\lambda=\frac{1}{2}$.
\end{proof}
\begin{remark}
If $\lambda=1$, then (i) is easily seen to be equivalent to $T^D=0$. Consequently, under this hypothesis we have $\Delta(T)=0$, and therefore $\overline{W((M_{0}(T))^\ast\Delta(T))}=\{0\}$. Hence this case is trivial.
\end{remark}

At the end of this section, we discuss two numerical radius inequalities involving $P_{\lambda}(T)$. 

\begin{theorem}
Let $\lambda\in[0,1]$ and $T\in\mathbb{B}(\mathcal{H})$. Then
\begin{align*}
	\omega(P_{\lambda}(T))&\le2\int_{0}^1\omega (t(1-2\lambda+2\lambda^2)M_{\frac{(1-\lambda)^2}{(1-\lambda)^2+\lambda^2}}(T)+(1-t)(2\lambda-2\lambda^2)\Delta(T))dt\\&\le
	\frac{1}{2}\Big((1-2\lambda+2\lambda^2)\omega(M_{\frac{(1-\lambda)^2}{(1-\lambda)^2+\lambda^2}}(T))+(2\lambda-2\lambda^2)\omega(\Delta(T))+\omega(P_{\lambda}(T))\Big).
\end{align*}
\end{theorem}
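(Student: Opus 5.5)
This is a Hermite–Hadamard type inequality for the convex function $t\mapsto\omega(tR+(1-t)S)$. Throughout, write
\[
R=(1-2\lambda+2\lambda^2)M_{\frac{(1-\lambda)^2}{(1-\lambda)^2+\lambda^2}}(T),\qquad S=(2\lambda-2\lambda^2)\Delta(T),
\]
so that \eqref{exp} reads $P_{\lambda}(T)=R+S$. Define $g(t)=\omega(tR+(1-t)S)$ for $t\in[0,1]$. Since $\omega(\cdot)$ is a norm on $\mathbb{B}(\mathcal{H})$, $g$ is convex and continuous, hence Riemann integrable. We also have $g(1)=\omega(R)$, $g(0)=\omega(S)$ and $g(\tfrac12)=\tfrac12\omega(P_{\lambda}(T))$. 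The claimed chain is then exactly
\[
2g(\tfrac12)\le 2\int_0^1 g(t)\,dt\le \tfrac12\bigl(g(0)+g(1)\bigr)+g(\tfrac12).
\]
Here positive homogeneity of $\omega$ gives $\omega(R)=(1-2\lambda+2\lambda^2)\,\omega(M_{\frac{(1-\lambda)^2}{(1-\lambda)^2+\lambda^2}}(T))$ and $\omega(S)=(2\lambda-2\lambda^2)\,\omega(\Delta(T))$, so the right-hand side matches the statement.

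For the left inequality we use the midpoint convexity $g(\tfrac12)\le\tfrac12\bigl(g(t)+g(1-t)\bigr)$, which holds because $\tfrac12(tR+(1-t)S)+\tfrac12((1-t)R+tS)=\tfrac12(R+S)$. Integrating over $[0,1]$ and using the substitution $t\mapsto1-t$ gives $g(\tfrac12)\le\int_0^1g(t)\,dt$. Multiplying by $2$ yields $\omega(P_{\lambda}(T))\le2\int_0^1 g$.

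For the right inequality we split $\int_0^1 g=\int_0^{1/2}g+\int_{1/2}^1 g$. On $[0,\tfrac12]$, write $t=(1-2t)\cdot 0+2t\cdot\tfrac12$; convexity gives $g(t)\le(1-2t)g(0)+2t\,g(\tfrac12)$, and integrating yields $\int_0^{1/2}g\le\tfrac14 g(0)+\tfrac14 g(\tfrac12)$. Symmetrically, $\int_{1/2}^1 g\le\tfrac14 g(1)+\tfrac14 g(\tfrac12)$. Adding these and multiplying by $2$ gives
\[
2\int_0^1 g\le\tfrac12\bigl(g(0)+g(1)\bigr)+g(\tfrac12)=\tfrac12\bigl(\omega(R)+\omega(S)+\omega(P_{\lambda}(T))\bigr),
\]
which is the second inequality.

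The only point requiring care is checking that the constants match: the factor $2$ on the integral corresponds to $2g(\tfrac12)=\omega(R+S)$. Beyond that bookkeeping and the convexity of $g$ (which follows from the norm property of $\omega$), the argument is routine and needs no operator-theoretic input beyond \eqref{exp}.
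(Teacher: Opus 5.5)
Your proof is correct and takes essentially the paper's route: the paper applies the Hammer--Bullen inequality $2f(\tfrac12)\le 2\int_0^1 f\le \tfrac12\bigl(f(0)+f(1)\bigr)+f(\tfrac12)$ to the convex function $f(t)=\omega(tA+(1-t)B)$, with $A$ and $B$ equal to your $R$ and $S$. The only difference is that you derive both halves of that inequality (the midpoint Hermite--Hadamard bound and Bullen's bound) directly from convexity instead of citing it, and your constants and integrals check out.
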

\begin{proof}
	
Since $\omega(\cdot)$ is a norm on $\mathbb{B}(\mathcal{H})$, it is easy to verify that for any $A,B\in\mathbb{B}(\mathcal{H})$, the function $f(t)=\omega(tA+(1-t)B)$ is convex on $\mathbb{R}$. Then, by the Hammer-Bullen inequality \cite{Ni.Pe}, we have $$2f\Big(\frac{0+1}{2}\Big)\le\frac{2}{1-0}\int_{0}^1f(t)dt\le\frac{f(0)+f(1)}{2}+f\Big(\frac{0+1}{2}\Big),$$
i.e.,$$2\omega\Big(\frac{A+B}{2}\Big)\le2\int_{0}^1\omega(tA+(1-t)B) dt\le\frac{\omega(A)+\omega(B)}{2}+\omega\Big(\frac{A+B}{2}\Big).$$
Substituting $A=(1-2\lambda+2\lambda^2)M_{\frac{(1-\lambda)^2}{(1-\lambda)^2+\lambda^2}}(T)$ and $B=(2\lambda-2\lambda^2)\Delta(T)$ into the above inequality, we obtain the desired result.
\end{proof}
\begin{remark}
Feki and Yamazaki proved in \cite[p. 412 and Theorem 2]{feki-mia-2021} that
\begin{equation}\label{feki2}
\omega(T^D)\le\omega(T)
\end{equation}
and
\begin{equation}\label{feki3}
\omega(\Delta(T))\le\frac{1}{2}\omega(T)+\frac{1}{2}\omega(T^D)
\end{equation}
for $T\in\mathbb{B}(\mathcal{H})$. From the definition of $P_{\lambda}(T)$, \eqref{feki2} and \eqref{feki3}, together with the fact that $\omega(\cdot)$ is a norm, we immediately obtain
\begin{equation*}
	\omega(P_{\lambda}(T))\le\omega(T).
\end{equation*}
In fact, this theorem improves the above inequality. For example, Let $\lambda=\frac{1}{2}$ and $T=\begin{bmatrix}
0&1\\0&1
\end{bmatrix}=U|T|$ with $U=\begin{bmatrix}
0&\frac{\sqrt{2}}{2}\\0&\frac{\sqrt{2}}{2}
\end{bmatrix}$ and $|T|=\begin{bmatrix}
0&0\\0&\sqrt{2}
\end{bmatrix}$. Then $\Delta(T)=T^D=\begin{bmatrix}
0&0\\0&1
\end{bmatrix}$, $M(T)=\begin{bmatrix}
0&\frac{1}{2}\\0&1
\end{bmatrix}$ and $P(T)=\begin{bmatrix}
0&\frac{1}{4}\\0&1
\end{bmatrix}$. For $a\ge0$, we have $\omega\Big(\begin{bmatrix}
0&a\\0&1
\end{bmatrix}\Big)=\frac{1+\sqrt{a^2+1}}{2}$. Hence, $\omega(P(T))=\frac{4+\sqrt{17}}{8}$, $\omega(M(T))=\frac{2+\sqrt{5}}{4}$, $\omega(\Delta(T))=1$, $\omega(T)=\frac{1+\sqrt{2}}{2}$ and $$\displaystyle \int_{0}^{1}\omega\Big(\begin{bmatrix}
0&\frac{1}{2}t\\0&1
\end{bmatrix}\Big)dt=\frac{1}{4}\int_{0}^12+\sqrt{t^2+4}dt=\frac{4+\sqrt{5}}{8}+\frac{1}{2}\ln\frac{1+\sqrt{5}}{2}.$$
Therefore, we obtain
\begin{align*}
\omega(P(T))\approx 1.0154&\le\int_{0}^1\omega(tM(T)+(1-t)\Delta(T))dt=\int_{0}^1\omega\Big(\begin{bmatrix}
	0&\frac{1}{2}t\\0&1
\end{bmatrix}\Big)dt\approx 1.0201\\&\le\frac{1}{2}(\frac{1}{2}\omega(M(T))+\frac{1}{2}\omega(\Delta(T))+\omega(P(T)))\approx1.0224\\&\le\omega(T)\approx 1.207.
\end{align*}
\end{remark}
Recently, Stanković proved in \cite[Theorem 4.7 and Corollary 4.8]{Stankovic-ZAA-2024} that
\begin{equation}\label{stan1}
\omega(\Delta(T))\le\omega(H_{\lambda}(T))\le\frac{1}{2}\omega(\Delta_{\lambda}(T))+\frac{1}{2}\omega(\Delta_{1-\lambda}(T))
\end{equation}
for $\lambda\in[0,1]$ and $T\in\mathbb{B}(\mathcal{H})$, which generalizes \eqref{feki3}. For the matrix case, the inequality \eqref{feki3} can be extended using the power mean transform together with a recent result of Hosseini et al. Let $\mathbb{M}_{n}(\mathbb{C})$ be the set of $n\times n$ complex matrices.
\begin{theorem}\cite[Theorem 2.2]{moradi-jmi-2025}\label{hoss}
Let $A,X\in \mathbb{M}_{n}(\mathbb{C})$ such that $A$ be positive definite. Then for any $0\le v\le1$,
$$\omega(A^vXA^{1-v}+A^{1-v}XA^v)\le\omega(4rA^\frac{1}{2}XA^\frac{1}{2}+(1-2r)(AX+XA))$$
where $r=\min\{v,|\frac{1}{2}-v|,1-v\}$.
\end{theorem}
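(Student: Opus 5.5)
The plan is to reduce the inequality to a Schur-multiplier statement and then to the positive definiteness of a single function on $\mathbb{R}$.

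\textbf{Reduction to a Schur product.} Both sides of the inequality in Theorem \ref{hoss} are unchanged when $A$ and $X$ are replaced by $V^\ast AV$ and $V^\ast XV$ with $V$ unitary, so we may assume $A=\operatorname{diag}(a_1,\dots,a_n)$ with all $a_i>0$. Then both operators are Schur (entrywise) products with $X$:
\[
A^vXA^{1-v}+A^{1-v}XA^v=H\circ X,\qquad 4rA^{\frac12}XA^{\frac12}+(1-2r)(AX+XA)=K\circ X,
\]
where
\[
H_{ij}=a_i^va_j^{1-v}+a_i^{1-v}a_j^v,\qquad K_{ij}=4r\sqrt{a_ia_j}+(1-2r)(a_i+a_j).
\]
Since $0\le r\le\frac12$, every $K_{ij}$ is positive. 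Hence $H\circ X=Y\circ(K\circ X)$ with $Y_{ij}=H_{ij}/K_{ij}$. On the diagonal we have $H_{ii}=2a_i$ and $K_{ii}=4ra_i+2(1-2r)a_i=2a_i$, so $Y_{ii}=1$ for all $i$.

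\textbf{Using Ando--Okubo.} By the Ando--Okubo theorem, if $Y$ is positive semidefinite then $\omega(Y\circ Z)\le \max_i Y_{ii}\,\omega(Z)$. Applying this with $Z=K\circ X$ and $\max_iY_{ii}=1$ gives the claim immediately. The whole proof therefore reduces to showing that $Y\ge0$.

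\textbf{Reduction to one function.} Put $a_i=e^{2s_i}$. Dividing numerator and denominator by $\sqrt{a_ia_j}$, and writing $\alpha=|2v-1|\in[0,1]$, we get
\[
Y_{ij}=\frac{\cosh(\alpha(s_i-s_j))}{2r+(1-2r)\cosh(s_i-s_j)}=:f(s_i-s_j).
\]
So it suffices to show that $f$ is a positive definite function on $\mathbb{R}$. By the symmetry $v\leftrightarrow 1-v$ we may take $v\le\frac12$, which leaves two regimes:
\begin{itemize}
\item if $v\le\frac14$, then $r=v$ and $\alpha=1-2r$;
\item if $\frac14\le v\le\frac12$, then $r=\frac12-v$ and $\alpha=2r\le\frac12$.
\end{itemize}

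\textbf{Positive definiteness of $f$ (main obstacle).} I expect this step to carry essentially all the difficulty. The natural tools are Kosaki's and Bhatia--Kosaki's results on positive definiteness of ratios such as $\cosh(ax)/\cosh(bx)$ for $|a|\le b$, and $\cosh(ax)/(t+\cosh x)$.

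For the second regime I would first try to write $f$ as a product of known positive definite factors, in the form
\[
f(x)=\frac{\cosh(\alpha x)}{\cosh(x/2)}\cdot\frac{\cosh(x/2)}{2r+(1-2r)\cosh x}.
\]
The first factor is positive definite since $\alpha\le\frac12$, and products of positive definite functions are positive definite by Bochner's theorem and the Schur product theorem. The second factor is then checked by computing its Fourier transform explicitly, through residues of $e^{i\xi x}/(c+\cosh x)$ with $c=2r/(1-2r)$, and verifying that the transform is nonnegative.

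For the first regime, where $\alpha=1-2r$, I would compute the Fourier transform of $f$ directly by contour integration over the strip $0<\operatorname{Im}z<2\pi$. The aim is to show that the transform is a nonnegative combination of terms of the form $\sinh(\cdot)/\sinh(\pi\xi)$. The boundary case $r=0$ is immediate, since then $f=1$ and Theorem \ref{hoss} reduces to the identity $\omega(AX+XA)=\omega(AX+XA)$; this serves as a sanity check on the computation.

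If the direct residue calculation becomes unwieldy, the fallback is an infinitely divisible or integral representation of the denominator, in the style of Kosaki, from which the positivity of the transform can be read off.
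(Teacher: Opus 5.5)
The paper gives no proof of this statement; it is quoted from Hosseini et al. Your proposal therefore has to stand on its own, and it stops short of the one step that carries the content.

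\textbf{What is correct.} Unitary invariance lets you take $A$ diagonal. Both sides are then Schur products with $X$, the quotient matrix $Y_{ij}=f(s_i-s_j)$ has unit diagonal, and the Ando--Okubo theorem reduces everything to positive definiteness of $f(x)=\cosh(\alpha x)/\bigl(2r+(1-2r)\cosh x\bigr)$. Your regime $\frac14\le v\le\frac12$ also goes through. Put $\cos\theta=\frac{2r}{1-2r}\in[0,1]$. Your second factor equals $\frac{1}{1-2r}\cdot\frac{\cosh(x/2)}{\cosh x+\cos\theta}$, and its transform is $\frac{\pi}{(1-2r)\cos(\theta/2)}\cdot\frac{\cosh(\theta\xi)}{\cosh(\pi\xi)}\ge0$.

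\textbf{The gap.} For $0<v<\frac14$ (where $r=v$ and $\alpha=1-2r\in(\frac12,1)$) you state an aim but give no argument. Your factorization device cannot help there. A positive definite factor $\cosh(\alpha x)/\cosh(bx)$ forces $b\ge\alpha>\frac12$, which only makes the remaining factor harder to control. Positivity in this regime is also tight: the inequality that decides the sign of $\hat f$ becomes an equality as $v\to\frac14$, so a soft argument will not suffice.

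\textbf{How to close it along your own lines.} Let $\hat f(\xi)=\int_{\mathbb{R}}f(x)e^{i\xi x}\,dx$ and write $f(x)=\frac{1}{1-2r}\cdot\frac{\cosh(\alpha x)}{\cosh x+\cos\theta}$ with the same $\theta$. Use
$\int_{\mathbb{R}}\frac{e^{i\eta x}}{\cosh x+\cos\theta}\,dx=\frac{2\pi\sinh(\theta\eta)}{\sin\theta\,\sinh(\pi\eta)}$, valid for $|\operatorname{Im}\eta|<1$, at $\eta=\xi\pm i\alpha$. Taking real parts gives
\[
\hat f(\xi)=\frac{2\pi}{(1-2r)\sin\theta}\cdot\frac{\cosh\bigl((\pi+\theta)\xi\bigr)\cos\bigl((\pi-\theta)\alpha\bigr)-\cosh\bigl((\pi-\theta)\xi\bigr)\cos\bigl((\pi+\theta)\alpha\bigr)}{\cosh(2\pi\xi)-\cos(2\pi\alpha)}.
\]
For $0<v<\frac14$ we have $\theta\in(0,\frac{\pi}{2})$ and $(\pi+\theta)\alpha\in(\frac{\pi}{2},\frac{3\pi}{2})$. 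So the second term of the numerator is nonnegative, while the first term dominates as $|\xi|\to\infty$. Hence $\hat f\ge0$ if and only if $(\pi-\theta)\alpha\le\frac{\pi}{2}$.

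Now set $w=\frac{2r}{1-2r}=\cos\theta\in(0,1)$, so that $\alpha=\frac{1}{1+w}$. The condition becomes $\cos\theta\le\sin\frac{\pi w}{2}$, which is Jordan's inequality $w\le\sin\frac{\pi w}{2}$. Equality holds exactly at $w=1$, i.e. $v=\frac14$.

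\textbf{Two side remarks.} The same formula with $\alpha=2r\le\frac12$ gives $\hat f\ge0$ at once, since the numerator is then at least $2\cosh((\pi-\theta)\xi)\sin(\pi\alpha)\sin(\theta\alpha)$. So the factorization in your second regime is not needed either. Also, the transform is not naturally a nonnegative combination of terms $\sinh(\cdot)/\sinh(\pi\xi)$ as you anticipated; its sign has to be extracted as above.
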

\begin{theorem}
Let $\lambda\in[0,1]$ and $T\in \mathbb{M}_{n}(\mathbb{C})$. Then
$$\omega(\Delta(T))\le\frac{1}{2}\omega(P_{\lambda}(T)+P_{1-\lambda}(T))\le\frac{1}{2}\omega(P_{\lambda}(T))+\frac{1}{2}\omega(P_{1-\lambda}(T)).$$
In particular, setting $\lambda=0$ or $\lambda=1$ gives \eqref{feki3}.
\end{theorem}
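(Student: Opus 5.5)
The plan is to write $P_{\lambda}(T)+P_{1-\lambda}(T)$ as the right-hand operator in Theorem \ref{hoss}, for $A=|T|$, $X=U$ and a suitable $v$. Then Stanković's inequality \eqref{stan1} bounds the left-hand side of Theorem \ref{hoss} from below by $2\omega(\Delta(T))$. The second inequality is just the triangle inequality for the norm $\omega(\cdot)$, so only the first needs work.

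\emph{Step 1: rewrite the sum.} Let $T=U|T|$ be the polar decomposition, so that $T=U|T|$, $T^D=|T|U$ and $\Delta(T)=|T|^{\frac12}U|T|^{\frac12}$. Put $s=\lambda^2+(1-\lambda)^2$, and note that $2\lambda(1-\lambda)=1-s$. From the definition of $P_\lambda$,
$$P_{\lambda}(T)+P_{1-\lambda}(T)=s\,(T+T^D)+4\lambda(1-\lambda)\Delta(T)=4r\,|T|^{\frac12}U|T|^{\frac12}+(1-2r)\big(|T|U+U|T|\big),$$
where $r=\lambda(1-\lambda)$. The last equality holds because $4r=4\lambda(1-\lambda)$ and $1-2r=s$.

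\emph{Step 2: choose $v$.} Take $v=\lambda(1-\lambda)\in[0,\tfrac14]$. Then $|\tfrac12-v|\ge\tfrac14\ge v$ and $1-v\ge v$, so $\min\{v,|\tfrac12-v|,1-v\}=v=r$, which is exactly the $r$ of Step 1.

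\emph{Step 3: apply Theorem \ref{hoss}.} That theorem requires $A$ to be positive definite, so I apply it to $A_\varepsilon=|T|+\varepsilon I$ with $X=U$ and let $\varepsilon\searrow0$. Both sides depend norm-continuously on $\varepsilon$, because $t\mapsto t^{p}$ is operator-norm continuous on positive operators for $p\in[0,1]$. Since $\omega$ is continuous (it is dominated by $\|\cdot\|$), we obtain
$$\omega\big(|T|^vU|T|^{1-v}+|T|^{1-v}U|T|^v\big)\le\omega\big(P_{\lambda}(T)+P_{1-\lambda}(T)\big).$$
The operator on the left is $\Delta_{1-v}(T)+\Delta_{v}(T)=2H_v(T)$.

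\emph{Step 4: invoke \eqref{stan1}.} Stanković's inequality gives $\omega(\Delta(T))\le\omega(H_v(T))$, hence $2\omega(\Delta(T))\le\omega(P_\lambda(T)+P_{1-\lambda}(T))$. This is the first inequality of the theorem.

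\emph{Special case.} For $\lambda\in\{0,1\}$ we have $P_\lambda(T)+P_{1-\lambda}(T)=T+T^D$. The chain then reads $\omega(\Delta(T))\le\frac12\omega(T+T^D)\le\frac12\omega(T)+\frac12\omega(T^D)$, which is \eqref{feki3}.

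The main obstacle is matching the coefficients, that is, finding $v$ with $\min\{v,|\tfrac12-v|,1-v\}=\lambda(1-\lambda)$ (Steps 1 and 2). The only other delicate point is the $\varepsilon$-perturbation in Step 3 when $T$ is not invertible: it is harmless, because only $|T|$ is perturbed while $X=U$ is held fixed.
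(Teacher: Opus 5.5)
Your proposal is correct and is essentially the paper's proof: the paper uses the same substitution $A=|T|$, $X=U$, $v=r=\lambda(1-\lambda)$ in Theorem~\ref{hoss}, the identity $P_{\lambda}(T)+P_{1-\lambda}(T)=4\lambda(1-\lambda)\Delta(T)+((1-\lambda)^2+\lambda^2)(T+T^D)$, Stankovi\'c's inequality \eqref{stan1} for the lower bound, and the triangle inequality for $\omega(\cdot)$. Your perturbation $A_\varepsilon=|T|+\varepsilon I$ is a small but genuine improvement, since the paper applies Theorem~\ref{hoss} directly to $|T|$ even though that theorem assumes $A$ is positive definite.
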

\begin{proof}
Let $T=U|T|$ be the polar decomposition of $T$. Note that $$\lambda(1-\lambda)=\min\Big\{\lambda(1-\lambda),\Big|\frac{1}{2}-\lambda(1-\lambda)\Big|,1-\lambda(1-\lambda)\Big\}.$$ Substituting $A=|T|$, $X=U$ and $r=v=\lambda(1-\lambda)$ into Theorem \ref{hoss}, we obtain
\begin{align*}
2\omega(\Delta(T))\le\omega(\Delta_{\lambda(1-\lambda)}(T)+\Delta_{1-\lambda(1-\lambda)}(T))&\le\omega(4\lambda(1-\lambda)\Delta(T)+((1-\lambda)^2+\lambda^2)(T+T^D))\\&=\omega(P_{\lambda}(T)+P_{1-\lambda}(T))\\&\le\omega(P_{\lambda}(T))+\omega(P_{1-\lambda}(T)),
\end{align*}
where the first inequality holds by \eqref{stan1}, and the last one holds because $\omega(\cdot)$ is a norm on $\mathbb{M}_{n}(\mathbb{C})$.
\end{proof}
	
	\section{The Power mean transform of binormal operators}
	
	We begin this part with a question posed by Golla et al. In \cite[Question 6]{yamazaki-laa-2023}, it is asked whether the equality $\Delta_{\mathsf{m}_{f}}(T)=T$ implies that $T$ is quasinormal. The result below shows that if $T$ is invertible and $f(x)=(\lambda+(1-\lambda)\sqrt{x})^2$ for $\lambda\in(0,1]$, then the answer is positive, since $\Delta_{\mathsf{m}_{f}}(T)=P_{\lambda}(T)$. Let us recall the following lemma due to Rosenblum.
	\begin{lemma}\cite{Djo-laa-2021, Ros-Duke-1956}\label{dukeequ}
		Let $A\in\mathbb{B}(\mathcal{K})$, $B\in\mathbb{B}(\mathcal{H})$ and $S\in\mathbb{B}(\mathcal{H},\mathcal{K})$. Then the equation $AX-XB=S$ admits a unique solution $X\in\mathbb{B}(\mathcal{H},\mathcal{K})$ if $\sigma(A)\bigcap\sigma(B)=\emptyset$.
	\end{lemma}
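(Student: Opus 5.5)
We plan to prove Lemma \ref{dukeequ} through Rosenblum's contour-integral formula. This gives existence by an explicit construction and uniqueness by the same integral trick. Since $\sigma(A)$ and $\sigma(B)$ are disjoint compact subsets of $\mathbb{C}$, there is an open set $\Omega\supseteq\sigma(A)$ with $\overline{\Omega}\cap\sigma(B)=\emptyset$. By the standard construction used in the Riesz--Dunford functional calculus, there is a cycle $\Gamma$ (finitely many oriented closed polygonal curves) in $\mathbb{C}\setminus(\sigma(A)\cup\sigma(B))$ such that:
\begin{itemize}
\item[(a)] $\Gamma$ has winding number $1$ about every point of $\sigma(A)$;
\item[(b)] $\Gamma$ has winding number $0$ about every point of $\sigma(B)$.
\end{itemize}
The Riesz--Dunford calculus then gives
$$\frac{1}{2\pi i}\oint_{\Gamma}(z-A)^{-1}dz=I_{\mathcal{K}},\qquad \oint_{\Gamma}(z-B)^{-1}dz=0.$$
The second identity holds because $z\mapsto(z-B)^{-1}$ is holomorphic on a neighbourhood of the region enclosed by $\Gamma$, so Cauchy's theorem applies.

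For existence, I will set
$$X=\frac{1}{2\pi i}\oint_{\Gamma}(A-z)^{-1}S(B-z)^{-1}dz\in\mathbb{B}(\mathcal{H},\mathcal{K}).$$
The integrand is norm-continuous on $\Gamma$, so $X$ is a well-defined bounded operator. To compute $AX-XB$, I write $A=(A-z)+z$ and $B=(B-z)+z$ under the integral sign. The two terms involving $z$ cancel, leaving
$$AX-XB=\frac{1}{2\pi i}\oint_{\Gamma}\Big(S(B-z)^{-1}-(A-z)^{-1}S\Big)dz.$$
By the two identities above, the first term integrates to $0$, and $-\frac{1}{2\pi i}\oint_\Gamma(A-z)^{-1}dz=I_{\mathcal{K}}$. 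Hence $AX-XB=S$.

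For uniqueness, by linearity it suffices to show that $AX=XB$ forces $X=0$. Suppose $AX=XB$. Then $(A-z)X=X(B-z)$ for every $z$, and so for $z\in\Gamma$ we get $(A-z)^{-1}X=X(B-z)^{-1}$. Therefore
$$X=\frac{1}{2\pi i}\oint_\Gamma(z-A)^{-1}X\,dz=\frac{1}{2\pi i}\oint_\Gamma X(z-B)^{-1}dz=0.$$

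The step needing the most care is the existence of the separating cycle $\Gamma$ when $\sigma(A)$ is not connected. This is a standard fact: take a grid of squares of small mesh, keep those meeting $\sigma(A)$, and sum their boundaries. I would cite it rather than reprove it.

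As a fallback, there is an alternative route. The maps $L_A:X\mapsto AX$ and $R_B:X\mapsto XB$ are commuting operators on $\mathbb{B}(\mathcal{H},\mathcal{K})$, with $\sigma(L_A)\subseteq\sigma(A)$ and $\sigma(R_B)\subseteq\sigma(B)$. Gelfand theory in a maximal commutative subalgebra containing both gives $\sigma(L_A-R_B)\subseteq\sigma(A)-\sigma(B)$, a set that does not contain $0$. Hence $L_A-R_B$ is invertible, which yields existence and uniqueness simultaneously.
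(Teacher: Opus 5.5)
Your proof is correct. The paper gives no proof of its own and simply cites Rosenblum, whose classical argument is exactly your contour-integral construction $X=\frac{1}{2\pi i}\oint_\Gamma (A-z)^{-1}S(B-z)^{-1}\,dz$, with uniqueness from $X=\frac{1}{2\pi i}\oint_\Gamma (z-A)^{-1}X\,dz=0$. Your fallback via $\sigma(L_A-R_B)\subseteq\sigma(A)-\sigma(B)$ is the Lumer--Rosenblum spectral version of the same theorem and is equally valid.
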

	
	\begin{theorem}
		Let $\lambda\in(0,1]$ and $T\in\mathbb{B}(\mathcal{H})$. Then $P_{\lambda}(T)=T$ if and only if $T$ is quasinormal.
	\end{theorem}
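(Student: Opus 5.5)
The ``if'' direction is immediate. If $T$ is quasinormal, then $[U,|T|]=0$, so $T^D=|T|U=U|T|=T$ and $\Delta(T)=|T|^{\frac12}U|T|^{\frac12}=U|T|=T$. Hence $P_\lambda(T)=((1-\lambda)^2+\lambda^2+2\lambda(1-\lambda))T=T$.

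For the converse, assume $P_\lambda(T)=T$ with $\lambda\in(0,1]$, and write $T=U|T|$, $A=|T|^{\frac12}$. The plan is to rewrite the hypothesis as a Sylvester-type equation in $A$.

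\textbf{Step 1 (reduction to an equation in $A$).} Subtracting $(1-\lambda)^2T$ and dividing by $\lambda$ gives
\[
\lambda A^2U+2(1-\lambda)AUA=(2-\lambda)UA^2 .
\]
Set $X=AU-UA$. Using $A^2U=A(UA+X)=AUA+AX$ and $AUA=UA^2+XA$, the left side becomes $\lambda(AUA+AX)+2(1-\lambda)AUA=(2-\lambda)AUA+\lambda AX$. The equation therefore reads
\[
(2-\lambda)(AUA-UA^2)+\lambda AX=0,\qquad\text{i.e.}\qquad (2-\lambda)XA+\lambda AX=0 .
\]
So $X$ solves $\lambda AX-(-(2-\lambda)A)X=0$, that is, $\lambda AX+(2-\lambda)XA=0$.

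\textbf{Step 2 (the invertible case).} If $A$ is invertible, then $\sigma(\lambda A)\subset(0,\infty)$ and $\sigma(-(2-\lambda)A)\subset(-\infty,0)$ are disjoint. Lemma~\ref{dukeequ} then says the homogeneous equation has the unique solution $X=0$. Hence $[A,U]=0$, so $[|T|,U]=0$ and $T$ is quasinormal.

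\textbf{Step 3 (the general case; main obstacle).} When $A$ is not invertible, both spectra contain $0$ and Rosenblum's lemma fails. I would argue directly by positivity instead. Since $X=AU-UA$ and $0\le\lambda\le 2-\lambda$, I would:
\begin{itemize}
\item multiply $\lambda AX+(2-\lambda)XA=0$ on the left by $X^\ast$ and take the trace-free inner product form $\langle\,\cdot\,x,x\rangle$; or, more robustly,
\item use the standard fact that if $A\ge0$ and $aAX+bXA=0$ with $a,b>0$, then $X=0$ on $\overline{\mathcal R(A)}$. This follows from the spectral decomposition of $A$: for spectral projections $E(\Delta_1),E(\Delta_2)$ of $A$ on intervals away from $0$, the compression $E(\Delta_1)XE(\Delta_2)$ satisfies Rosenblum's equation with disjoint spectra, so it vanishes. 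Letting the intervals exhaust $(0,\infty)$ gives $P_{\overline{\mathcal R(A)}}XP_{\overline{\mathcal R(A)}}=0$.
\end{itemize}
The mixed blocks also need handling. Let $Q=P_{\mathcal N(A)}$.
\begin{itemize}
\item Right-multiplying by $Q$ and using $AQ=0$ gives $\lambda AXQ=0$. Since $\lambda>0$, this means $XQ$ maps into $\mathcal N(A)$, so $(I-Q)XQ=0$.
\item Left-multiplying by $Q$ gives $(2-\lambda)QXA=0$, so $QX(I-Q)=0$.
\item $QXQ=Q(AU-UA)Q=0$ directly.
\end{itemize}
All four blocks vanish, so $X=0$. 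Hence $U$ commutes with $A$, and therefore with $|T|=A^2$, which gives quasinormality. The delicate point is justifying the spectral-block argument on $\overline{\mathcal R(A)}$; this is where I expect the main work to lie. An alternative is to reduce to the invertible case using Proposition~\ref{kernel} and the decomposition $\mathcal H=\mathcal N(T)\oplus\mathcal N(T)^\perp$, but that requires care because $U$ need not preserve this decomposition.
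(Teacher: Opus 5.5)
Your proof is correct and follows essentially the paper's route: you derive the same homogeneous Sylvester equation $\lambda AX+(2-\lambda)XA=0$ for $X=AU-UA$ (the paper's $C$, with its equation divided by $\lambda$), and you annihilate its compressions to spectral subspaces of $|T|$ bounded away from $0$ using Rosenblum's lemma. The only difference is minor: you dispose of the blocks touching $\mathcal{N}(|T|)$ directly via $AQ=QA=0$, whereas the paper also sends the mixed blocks through Rosenblum (spectrum $\{0\}$ versus spectra bounded away from $0$) and kills the corner block using $C P_{\mathcal{N}(|T|)}=0$, which relies on $\mathcal{N}(U)=\mathcal{N}(|T|)$.
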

	\begin{proof}
		\textit{Necessity}. If $T$ is quasinormal, then $T=T^D=\Delta(T)$. The desired result follows immediately from the definition of $P_{\lambda}(T)$.
		
		\textit{Sufficiency}. Let $T=U|T|$ be the polar decomposition of $T$. Write $C=|T|^\frac{1}{2}U-U|T|^\frac{1}{2}$. If we can prove $C=0$, then $T=U|T|=U|T|^\frac{1}{2}|T|^\frac{1}{2}=|T|^\frac{1}{2}U|T|^\frac{1}{2}=|T|^\frac{1}{2}|T|^\frac{1}{2}U=T^D,$ which implies that $T$ is quasinormal. The case $T=0$ is trivial. We now consider $T\ne0$. In fact, $P_{\lambda}(T)=T$ yields
		\begin{equation}\label{syequ}
		\lambda^2|T|^\frac{1}{2}C+(1-(1-\lambda)^2)C|T|^\frac{1}{2}=0
		\end{equation}
		directly.
		Let $E$ be the spectral measure of $|T|$ on $[0,\|T\|]$. Since $T\ne0$, there exists a positive integer $N$ such that $\frac{1}{N}<\|T\|$. Set $\mathcal{P}_{0}=E(\{0\})=P_{\mathcal{N}(|T|)}$, $\mathcal{P}_{N}=E([\frac{1}{N},\|T\|])$ and $\mathcal{P}_{n+1}=E([\frac{1}{n+1},\frac{1}{n}))$ for $n\in\{N,N+1,N+2,\cdots\}$. Left-multiplying \eqref{syequ} by $\mathcal{P}_{n}$ and right-multiplying it by $\mathcal{P}_{m}$ gives $\lambda^2\mathcal{P}_{n}|T|^\frac{1}{2}C\mathcal{P}_{m}+\bigl(1-(1-\lambda)^2\bigr)\mathcal{P}_{n}C|T|^\frac{1}{2}\mathcal{P}_{m}=0$; then, by the idempotency of $\mathcal{P}_{n}$ and its commutativity with $|T|$, we arrive at the following:
		\begin{equation}\label{sylvester}
		(\lambda^2\mathcal{P}_{n}|T|^\frac{1}{2}\mathcal{P}_{n})(\mathcal{P}_{n}C\mathcal{P}_{m})-(\mathcal{P}_{n}C\mathcal{P}_{m})\bigl(\bigl((1-\lambda)^2-1\bigr)\mathcal{P}_{m}|T|^\frac{1}{2}\mathcal{P}_{m}\bigr)=0
		\end{equation}
		for $n,m\in\{0,N,N+1,N+2,\cdots\}\triangleq I$. Now, we regard $\lambda^2\mathcal{P}_{n}|T|^\frac{1}{2}\mathcal{P}_{n}$, $((1-\lambda)^2-1)\mathcal{P}_{m}|T|^\frac{1}{2}\mathcal{P}_{m}$ and $\mathcal{P}_{n}C\mathcal{P}_{m}$ as operators on the Hilbert spaces $\mathcal{P}_{n}\mathcal{H}$, $\mathcal{P}_{m}\mathcal{H}$, and from $\mathcal{P}_{m}\mathcal{H}$ to $\mathcal{P}_{n}\mathcal{H}$, respectively. Note that
		\begin{align*}
			\sigma(\lambda^2\mathcal{P}_{n}|T|^\frac{1}{2}\mathcal{P}_{n})&=\{\lambda^2\cdot\gamma:\gamma\in\sigma(\mathcal{P}_{n}|T|^\frac{1}{2}\mathcal{P}_{n})\}\\&\subseteq\begin{cases}
				\{0\}&n=0\\
				[\frac{\lambda^2}{n},+\infty)&n\in I\backslash\{0\}
			\end{cases}
		\end{align*}
		and
		\begin{align*}
			\sigma\bigl(\bigl((1-\lambda)^2-1\bigr)\mathcal{P}_{m}|T|^\frac{1}{2}\mathcal{P}_{m}\bigr)&=\{((1-\lambda)^2-1)\cdot\beta:\beta\in\sigma(\mathcal{P}_{m}|T|^\frac{1}{2}\mathcal{P}_{m})\}\\&\subseteq\begin{cases}
				\{0\}&m=0\\
				(-\infty,\frac{(1-\lambda)^2-1}{m}]&m\in I\backslash\{0\}
			\end{cases},
		\end{align*}
		which implies $\sigma(\lambda^2\mathcal{P}_{n}|T|^\frac{1}{2}\mathcal{P}_{n})\bigcap\sigma\bigl(\bigl((1-\lambda)^2-1\bigr)\mathcal{P}_{m}|T|^\frac{1}{2}\mathcal{P}_{m}\bigr)=\emptyset$ for every $(n,m)\in (I\times I)\backslash\{(0,0)\}$. Applying Lemma \ref{dukeequ} with $A=\lambda^2\mathcal{P}_{n}|T|^\frac{1}{2}\mathcal{P}_{n}$, $B=((1-\lambda)^2-1)\mathcal{P}_{m}|T|^\frac{1}{2}\mathcal{P}_{m}$ and $S=0$, from \eqref{sylvester} we obtain $\mathcal{P}_{n}C\mathcal{P}_{m}=0$ for $(n,m)\in (I\times I)\backslash\{(0,0)\}$. Combining the fact that $C\mathcal{P}_{0}=CP_{\mathcal{N}(|T|)}=|T|^\frac{1}{2}UP_{\mathcal{N}(|T|)}-U|T|^\frac{1}{2}P_{\mathcal{N}(|T|)}=|T|^\frac{1}{2}UP_{\mathcal{N}(U)}-0=0$, we conclude that $$\mathcal{P}_{n}C\mathcal{P}_{m}=0$$ for $n,m\in I$. Using the above equality together with the strong operator convergence of $\mathcal{P}_{0}+\lim\limits_{l\to\infty}\sum\limits_{k=N}^l\mathcal{P}_{k}$ to the identity operator, we can easily verify that $C=0$. The proof is complete.
	\end{proof}

In order to characterize the binormality via the power mean transform, the following lemmas are needed.
	\begin{lemma}\label{commu}
		Let $T\in\mathbb{B}(\mathcal{H})$. Suppose $S\in \mathbb{B}(\mathcal{H})$ is positive. Then $[S,T]=0$ if and only if $[S^\alpha,T]=0$ for all $\alpha>0$.
	\end{lemma}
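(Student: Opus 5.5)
The reverse implication is immediate: if $[S^\alpha,T]=0$ for all $\alpha>0$, take $\alpha=1$. So the work lies in showing that $[S,T]=0$ forces $[S^\alpha,T]=0$ for every $\alpha>0$. The approach is the continuous functional calculus combined with polynomial approximation.

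Assume $ST=TS$. By induction, $S^kT=TS^k$ for every $k\in\mathbb{N}$. Hence $p(S)T=Tp(S)$ for every polynomial $p$ with complex coefficients.

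Now fix $\alpha>0$. The function $g(x)=x^\alpha$ is continuous on the compact set $\sigma(S)\subseteq[0,\|S\|]$, including at $0$ because $\alpha>0$. By the Weierstrass approximation theorem there are polynomials $p_j$ with $\sup_{x\in[0,\|S\|]}|p_j(x)-x^\alpha|\to0$. Since $S$ is self-adjoint, the continuous functional calculus is isometric, so
\[
\|p_j(S)-S^\alpha\|=\sup_{x\in\sigma(S)}|p_j(x)-x^\alpha|\to0 .
\]
Here $S^\alpha$ is, by definition, $g(S)$; for $\alpha=\frac{1}{2}$ this agrees with the square root $S^{\frac{1}{2}}$ used throughout the paper.

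Finally, pass to the limit in norm:
\[
[S^\alpha,T]=\lim_{j\to\infty}\bigl(p_j(S)T-Tp_j(S)\bigr)=0,
\]
which is justified because multiplication is norm-continuous.

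The only point needing care is that $x^\alpha$ is continuous at $0$, so that polynomial approximation is uniform on all of $\sigma(S)$ even when $S$ is not invertible. This is exactly where $\alpha>0$ is used. Everything else is routine.
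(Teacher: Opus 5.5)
Your proposal is correct and follows the same route as the paper, which simply cites the continuous functional calculus. You spell out the standard argument behind that citation: commutation with polynomials in $S$, then uniform Weierstrass approximation of $x^\alpha$ on $[0,\|S\|]$, which is where $\alpha>0$ is needed for continuity at $0$.
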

	\begin{proof}
	This is a direct consequence of the continuous functional calculus.
	\end{proof}
	\begin{lemma}\cite{fur-asm-1983}\label{commu8}
Suppose $S,T\in\mathbb{B}(\mathcal{H})$ are positive and $[S,T]=0$. Then $[P_{\mathcal{N}(S)^\perp},P_{\mathcal{N}(T)^\perp}]=[P_{\mathcal{N}(S)^\perp},T]=[S,P_{\mathcal{N}(T)^\perp}]=0$.
	\end{lemma}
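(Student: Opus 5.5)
The statement to prove is Lemma \ref{commu8} (Furuta): if $S,T\ge0$ and $[S,T]=0$, then the support projections $P_{\mathcal{N}(S)^\perp}$ and $P_{\mathcal{N}(T)^\perp}$ commute with each other and with $T$ and $S$ respectively. The plan is to realize each support projection as a strong limit of functions of the operator, and then use that the commutant of an operator is closed in the strong operator topology.

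First I show that $P_{\mathcal{N}(S)^\perp}$ commutes with every operator commuting with $S$. For a positive operator $S$ we have $P_{\mathcal{N}(S)^\perp}=P_{\overline{\mathcal{R}(S)}}=\lim_{n\to\infty}S^{1/n}$ in the strong operator topology. Write $S=\int s\,dE_s$. Then $s^{1/n}\to\chi_{(0,\infty)}(s)$ pointwise and boundedly on $[0,\|S\|]$, so by the dominated convergence theorem for spectral integrals, $S^{1/n}x\to E((0,\infty))x=P_{\mathcal{N}(S)^\perp}x$ for every $x$. Alternatively one can use the bounded increasing sequence $S(S+\frac1n I)^{-1}$, which avoids the spectral measure.

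Now let $X$ commute with $S$. By Lemma \ref{commu}, $[S^{1/n},X]=0$ for all $n$. Passing to the strong limit in $S^{1/n}Xx=XS^{1/n}x$ gives $P_{\mathcal{N}(S)^\perp}X=XP_{\mathcal{N}(S)^\perp}$; here we use that $X$ is bounded, so $XS^{1/n}x\to XPx$. Taking $X=T$ yields $[P_{\mathcal{N}(S)^\perp},T]=0$, and by symmetry $[S,P_{\mathcal{N}(T)^\perp}]=0$.

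Finally, apply the previous step with $T$ in the role of $S$ and $X=P_{\mathcal{N}(S)^\perp}$, which commutes with $T$ by what was just shown. This gives $[P_{\mathcal{N}(S)^\perp},P_{\mathcal{N}(T)^\perp}]=0$. The only real point needing care is the strong convergence $S^{1/n}\to P_{\overline{\mathcal{R}(S)}}$, and this is standard.
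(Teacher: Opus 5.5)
Your proof is correct and complete. The paper gives no argument of its own for this lemma; it simply quotes it from Furuta \cite{fur-asm-1983}. Your proposal therefore supplies a self-contained justification rather than differing from a written proof.

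The steps all check out:
\begin{itemize}
\item $s^{1/n}\to\chi_{(0,\infty)}(s)$ boundedly on $[0,\|S\|]$, so $S^{1/n}\to E((0,\infty))=I-E(\{0\})=P_{\mathcal{N}(S)^\perp}$ strongly.
\item Lemma \ref{commu} gives $[S^{1/n},X]=0$ whenever $[S,X]=0$.
\item Boundedness of $X$ lets you pass to the strong limit on both sides.
\item The three relations follow exactly as you say. The last one comes from applying the general step to $T$ with $X=P_{\mathcal{N}(S)^\perp}$.
\end{itemize}
Your alternative via $S(S+\frac1n I)^{-1}$ also works. It does not even need Lemma \ref{commu}, since $[S,X]=0$ directly gives $[(S+\frac1n I)^{-1},X]=0$.

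For comparison, there is a shorter route with no limits at all. If $XS=SX$, then $X\mathcal{N}(S)\subseteq\mathcal{N}(S)$ and $X\mathcal{R}(S)\subseteq\mathcal{R}(S)$. Hence $X$ leaves both $\mathcal{N}(S)$ and $\overline{\mathcal{R}(S)}=\mathcal{N}(S)^\perp$ invariant. So $\mathcal{N}(S)$ reduces $X$, and $P_{\mathcal{N}(S)^\perp}X=XP_{\mathcal{N}(S)^\perp}$. The rest of your bootstrapping then goes through unchanged.

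This shows the lemma only needs $S$ and $T$ self-adjoint, not positive. Your functional-calculus proof uses positivity, through $S^{1/n}$ or the invertibility of $S+\frac1n I$. In return, it exhibits $P_{\mathcal{N}(S)^\perp}$ explicitly as a strong limit of functions of $S$, which gives the commutation with anything commuting with $S$ for free.
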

	Our main result is now presented.
	
	\begin{theorem}\label{commu3}
		Let $T=U|T|$ be the polar decomposition of $T\in\mathbb{B}(\mathcal{H})$. Then the following conditions are equivalent:
		\begin{itemize}
			\item[(i)] $T$ is binormal.
			\item[(ii)] $UU^\ast|T|=|T|UU^\ast$ and $U^\ast\Delta(T)$ is self-adjoint.
			\item[(iii)] for all $\lambda\in(0,1)$, $P_{\lambda}(T)=U|P_{\lambda}(T)|$ is the polar decomposition of $P_{\lambda}(T)$, where $|P_{\lambda}(T)|=((1-\lambda)|T|^\frac{1}{2}+\lambda U^\ast|T|^{\frac{1}{2}}U)^2$.
			\item[(iv)] for some $\lambda\in(0,1)$, $P_{\lambda}(T)=U|P_{\lambda}(T)|$ is the polar decomposition of $P_{\lambda}(T)$, where $|P_{\lambda}(T)|=((1-\lambda)|T|^\frac{1}{2}+\lambda U^\ast|T|^{\frac{1}{2}}U)^2$. 
		\end{itemize}
	\end{theorem}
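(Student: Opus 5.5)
Throughout, write $A=|T|^{\frac12}$, $B=U^\ast|T|^{\frac12}U=U^\ast AU$, $X=UAU^\ast=|T^\ast|^{\frac12}$ and $P=UU^\ast=P_{\overline{\mathcal{R}(T)}}=P_{\mathcal{N}(|T^\ast|)^\perp}$. The plan is to reduce all four conditions to the single statement ``$[P,A]=0$ and $[A,B]=0$''. I would prove (i)$\Leftrightarrow$(ii) first, then (ii)$\Rightarrow$(iii), note that (iii)$\Rightarrow$(iv) is trivial, and close with (iv)$\Rightarrow$(ii).

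\emph{Basic identities.} By \eqref{polarbasic} we have $U^\ast UA=A=AU^\ast U$, hence $U^\ast XU=A$ and $XP=PX=X$. Moreover
$$AB=AU^\ast AU=U^\ast XU\,U^\ast AU=U^\ast XPAU,\qquad U^\ast\Delta(T)=U^\ast AUA=BA.$$
When $[P,A]=0$, this gives $XPA=XAP$ and $P=U U^\ast$ absorbs into $U$, so $AB=U^\ast XAU$ and $BA=U^\ast AXU$. Conjugating by $U$, the condition $AB=BA$ becomes $P[X,A]P=0$. Since $P$ commutes with both $X$ and $A$, this is the same as $[X,A]=0$.

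\emph{(i)$\Leftrightarrow$(ii).} By Campbell's criterion, binormality is $[|T|,U|T|U^\ast]=[|T|,|T^\ast|]=0$.
\begin{itemize}
\item[$\Rightarrow$] Lemma \ref{commu8}, applied to $S=|T^\ast|$ and $|T|$, gives $[P,|T|]=0$. Lemma \ref{commu} then gives $[X,A]=0$ and $[P,A]=0$. By the basic identities, $AB=BA$, i.e.\ $U^\ast\Delta(T)=BA$ is self-adjoint.
\item[$\Leftarrow$] From $[P,|T|]=0$ and Lemma \ref{commu} we get $[P,A]=0$. Self-adjointness of $BA$ with $A,B\ge0$ means $AB=BA$. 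The computation above then gives $[X,A]=0$, and Lemma \ref{commu} upgrades this to $[|T^\ast|,|T|]=0$.
\end{itemize}

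\emph{(ii)$\Rightarrow$(iii).} Since $[P,A]=0$, we have $T^D=|T|U=UU^\ast|T|U=UB^2$ (using $U^\ast A^2U=B^2$, which needs $APA=A^2$) and $\Delta(T)=UU^\ast AUA=UBA$. Together with $T=UA^2$ this gives
$$P_\lambda(T)=U\big((1-\lambda)^2A^2+\lambda^2B^2+2\lambda(1-\lambda)BA\big).$$
Because $AB=BA$, the bracket equals $D:=((1-\lambda)A+\lambda B)^2\ge0$. To identify $P_\lambda(T)=UD$ as the polar decomposition, it remains to check $\mathcal{N}(U)=\mathcal{N}(D)$.
\begin{itemize}
\item Since $\mathcal{R}(A)$ and $\mathcal{R}(B)$ lie in $\mathcal{R}(U^\ast)$, so does $\mathcal{R}(D)$. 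Hence $U$ is isometric on $\mathcal{R}(D)$, and therefore $\mathcal{N}(D)=\mathcal{N}(UD)=\mathcal{N}(P_\lambda(T))$.
\item By Proposition \ref{kernel}, $\mathcal{N}(P_\lambda(T))=\mathcal{N}(T)=\mathcal{N}(U)$.
\end{itemize}
Uniqueness of the polar decomposition then gives (iii).

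\emph{(iv)$\Rightarrow$(ii).} This is the step I expect to be the main obstacle, since the commutation $[P,|T|]=0$ has to be extracted from a single $\lambda$.
\begin{itemize}
\item Multiplying $P_\lambda(T)=UD$ by $U^\ast$ and using $U^\ast UD=D$ gives
$$D=(1-\lambda)^2A^2+\lambda^2U^\ast|T|U+2\lambda(1-\lambda)BA.$$
\item The left side and the first two terms on the right are self-adjoint, so $BA$ is self-adjoint, i.e.\ $AB=BA$.
\item Expanding $D=(1-\lambda)^2A^2+\lambda^2B^2+2\lambda(1-\lambda)BA$ and comparing yields $B^2=U^\ast A^2U$. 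That is, $U^\ast APAU=U^\ast A^2U$, so $U^\ast A(I-P)AU=0$ and therefore $(I-P)AU=0$.
\item Right-multiplying by $U^\ast$ gives $(I-P)AP=0$. Taking adjoints gives $PA(I-P)=0$, so $[P,A]=0$, and hence $[P,|T|]=0$.
\end{itemize}
This establishes (ii).
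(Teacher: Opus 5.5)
Your proof is correct and follows essentially the same route as the paper: the cycle (i)$\Leftrightarrow$(ii)$\Rightarrow$(iii)$\Rightarrow$(iv)$\Rightarrow$(ii) with the same key computations. The only differences are cosmetic: you conjugate $[|T^\ast|^{1/2},|T|^{1/2}]$ by $U$ directly instead of passing through $[U^\ast|T|U,|T|]$ and $(U^\ast|T|^{1/2}U)^2=U^\ast|T|U$, and you get $\mathcal{N}(D)=\mathcal{N}(U)$ from Proposition \ref{kernel} rather than from positivity. One parenthetical is inaccurate: $[P,A]=0$ gives $APA=A^2P$, not $APA=A^2$ (the latter fails for the unilateral shift), but $U^\ast APAU=U^\ast A^2PU=U^\ast A^2U$ is all you actually use.
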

	\begin{proof}
		First we prove (i) $\Leftrightarrow$ (ii); then we prove (ii) $\Rightarrow$ (iii) $\Rightarrow$ (iv) $\Rightarrow$ (ii).	
		
		(i) $\Leftrightarrow$ (ii): Suppose $T$ is binormal, then $[T^\ast T,TT^\ast]=0$. By Lemma \ref{commu} we have $[|T|,|T^\ast|]=0$, which, by $|T^\ast|=U|T|U^\ast$, implies $U^\ast[|T|,U|T|U^\ast]U=0$. Combining
		\begin{equation}\label{alucom1}
	U^\ast U|T|=P_{\overline{\mathcal{R}(|T|)}}|T|=|T|=|T|U^\ast U
		\end{equation}
	 and Lemma \ref{commu}, we obtain
		\begin{equation}\label{commu1}
		[U^\ast|T|U,|T|^\frac{1}{2}]=0.
		\end{equation} 
		On the other hand, since $UU^\ast=P_{\overline{\mathcal{R}(|T^\ast|)}}=P_{\mathcal{N}(|T^\ast|)^\perp}$, \begin{equation}\label{pro}
				|T|UU^\ast=UU^\ast|T|
			\end{equation} follows from Lemma \ref{commu8}. Next, if we can prove $(U^\ast|T|^\frac{1}{2}U)^2=U^\ast|T|U$, then by the uniqueness of the positive square root together with Lemma \ref{commu} (applied to Equation \eqref{commu1}), we have
			\begin{equation}\label{alucom}
			[U^\ast|T|^\frac{1}{2}U,|T|^\frac{1}{2}]=0,
			\end{equation}
			 i.e., $U^\ast\Delta(T)$ is self-adjoint. Indeed, combining \eqref{pro} and Lemma \ref{commu}, we get
		\begin{equation}\label{squareroot}
			|T|^\frac{1}{2}UU^\ast=UU^\ast|T|^\frac{1}{2}.
		\end{equation}
		Then we deduce\begin{align*}
			(U^\ast|T|^\frac{1}{2}U)^2=U^\ast|T|^\frac{1}{2}UU^\ast|T|^\frac{1}{2}U=U^\ast UU^\ast|T|^\frac{1}{2}|T|^\frac{1}{2}U=P_{\overline{\mathcal{R}(|T|)}}U^\ast|T|U=U^\ast|T|U.
		\end{align*}
		The last equality is valid since $\mathcal{R}(U^\ast)=\overline{\mathcal{R}(|T|)}$. Hence the desired result follows.
		
		Conversely, if $U^\ast\Delta(T)$ is self-adjoint, then by Lemma \ref{commu} we have
	\begin{equation}\label{squareroot1}
	[U^\ast|T|^\frac{1}{2}U,|T|]=0
		\end{equation}
On the other hand, using Lemma \ref{commu} and the condition $UU^\ast|T|=|T|UU^\ast$ yields $UU^\ast|T|^\frac{1}{2}=|T|^\frac{1}{2}UU^\ast$. Then the same proof as above gives
		\begin{equation}\label{squareroot2}
			(U^\ast|T|^\frac{1}{2}U)^2=U^\ast|T|U.
		\end{equation}
		By \eqref{squareroot2} and Lemma \ref{commu} applied to \eqref{squareroot1}, we obtain
		$U[U^\ast|T|U,|T|]U^\ast=0$. Further, together with $|T^\ast|=U|T|U^\ast$ and the condition $UU^\ast|T|=|T|UU^\ast$, we get $$|T|(UU^\ast|T^\ast|)=(|T^\ast|UU^\ast)|T|,$$ which, by $UU^\ast=P_{\overline{\mathcal{R}(|T^\ast|)}}$, implies $$|T|\Big((P_{\overline{\mathcal{R}(|T^\ast|)}}+P_{\mathcal{N}(|T^\ast|)})|T^\ast|\Big)=\Big(|T^\ast|\Big(P_{\overline{\mathcal{R}(|T^\ast|)}}+P_{\mathcal{N}(|T^\ast|)}\Big)\Big)|T|,$$
		i.e., $|T||T^\ast|=|T^\ast||T|$. Thus, (i) is equivalent to (ii).
		
		(ii) $\Rightarrow$ (iii): Let $\lambda\in(0,1)$. Write the positive operator $\widehat{P}=((1-\lambda)|T|^\frac{1}{2}+\lambda U^\ast|T|^\frac{1}{2}U)^2$. We need only prove $P_{\lambda}(T)=U\widehat{P}$ and $\mathcal{N}(U)=\mathcal{N}(\widehat{P})$; then the desired result follows from the Polar Decomposition Theorem in \cite[Chapter VIII, \S 3]{Conway-book-1990}. Indeed, a direct computation shows that \begin{align*}
			U\widehat{P}&=U((1-\lambda)|T|^\frac{1}{2}+\lambda U^\ast|T|^\frac{1}{2}U)^2\\&=U((1-\lambda)^2|T|+\lambda(1-\lambda)|T|^\frac{1}{2}U^\ast|T|^\frac{1}{2}U+\lambda(1-\lambda)U^\ast|T|^\frac{1}{2}U|T|^\frac{1}{2}+\lambda^2U^\ast|T|^\frac{1}{2}UU^\ast|T|^\frac{1}{2}U)\\&\xlongequal{U^\ast\Delta(T)\text{ is self-adjoint}}U((1-\lambda)^2|T|+2\lambda(1-\lambda)U^\ast|T|^\frac{1}{2}U|T|^\frac{1}{2}+\lambda^2U^\ast|T|^\frac{1}{2}UU^\ast|T|^\frac{1}{2}U)\\&=(1-\lambda)^2U|T|+2\lambda(1-\lambda)UU^\ast|T|^\frac{1}{2}U|T|^\frac{1}{2}+\lambda^2UU^\ast|T|^\frac{1}{2}UU^\ast|T|^\frac{1}{2}U\\&\xlongequal{\eqref{squareroot}}(1-\lambda)^2U|T|+2\lambda(1-\lambda)|T|^\frac{1}{2}UU^\ast U|T|^\frac{1}{2}+\lambda^2|T|^\frac{1}{2}UU^\ast UU^\ast|T|^\frac{1}{2}U\\&\xlongequal{UU^\ast U=U}(1-\lambda)^2T+2\lambda(1-\lambda)|T|^\frac{1}{2}U|T|^\frac{1}{2}+\lambda^2|T|^\frac{1}{2}UU^\ast|T|^\frac{1}{2}U\\&\xlongequal{\eqref{squareroot}}(1-\lambda)^2T+2\lambda(1-\lambda)|T|^\frac{1}{2}U|T|^\frac{1}{2}+\lambda^2|T|^\frac{1}{2}|T|^\frac{1}{2}UU^\ast U\\&\xlongequal{UU^\ast U=U}(1-\lambda)^2T+2\lambda(1-\lambda)|T|^\frac{1}{2}U|T|^\frac{1}{2}+\lambda^2|T|U\\&=(1-\lambda)^2T+2\lambda(1-\lambda)\Delta(T)+\lambda^2T^D=P_{\lambda}(T).
		\end{align*}
		On the other hand, if $x\in\mathcal{N}((1-\lambda)|T|^\frac{1}{2}+\lambda U^\ast|T|^\frac{1}{2}U)=\mathcal{N}((\widehat{P})^\frac{1}{2})=\mathcal{N}(\widehat{P})$, then it follows from the positivity of $(1-\lambda)|T|^\frac{1}{2}$ and $\lambda U^\ast|T|^\frac{1}{2}U$ that $(1-\lambda)|T|^\frac{1}{2}x=0\Rightarrow x\in\mathcal{N}(|T|^\frac{1}{2})=\mathcal{N}(|T|)=\mathcal{N}(U)$. Hence, $\mathcal{N}(\widehat{P})\subseteq\mathcal{N}(U)$ follows. Finally, from $\mathcal{N}(U)=\mathcal{N}(|T|^\frac{1}{2})$ we have $\mathcal{N}(U)\subseteq\mathcal{N}((\widehat{P})^\frac{1}{2})=\mathcal{N}(\widehat{P})$. Hence, (iii) holds.
		
		(iii) $\Rightarrow$ (iv) is clear.
		
		(iv) $\Rightarrow$ (ii): Suppose $P_{\lambda}(T)=U|P_{\lambda}(T)|$ is the polar decomposition of $P_{\lambda}(T)$ for some $\lambda\in(0,1)$. It follows that
		\begin{align}\label{self3}
			\nonumber
			|P_{\lambda}(T)|=U^\ast U|P_{\lambda}(T)|&=U^\ast P_{\lambda}(T)=U^\ast((1-\lambda)^2T+\lambda^2T^D+2\lambda(1-\lambda)\Delta(T))\\&=(1-\lambda)^2|T|+\lambda^2U^\ast|T|U+2\lambda(1-\lambda)U^\ast|T|^\frac{1}{2}U|T|^\frac{1}{2}.
		\end{align}
		By taking the adjoint of the above equation, we easily get $U^\ast|T|^\frac{1}{2}U|T|^\frac{1}{2}=|T|^\frac{1}{2}U^\ast|T|^\frac{1}{2}U$, i.e., $U^\ast\Delta(T)$ is self-adjoint. 
		
		On the other hand, expanding the condition $|P_{\lambda}(T)|=((1-\lambda)|T|^\frac{1}{2}+\lambda U^\ast|T|^{\frac{1}{2}}U)^2$ yields \begin{align}\label{self4}
			\nonumber
			|P_{\lambda}(T)|&=(1-\lambda)^2|T|+\lambda(1-\lambda)|T|^\frac{1}{2}U^\ast|T|^\frac{1}{2}U+\lambda(1-\lambda)U^\ast|T|^\frac{1}{2}U|T|^\frac{1}{2}+\lambda^2U^\ast|T|^\frac{1}{2}UU^\ast|T^\frac{1}{2}|U\\&=(1-\lambda)^2|T|+2\lambda(1-\lambda)U^\ast|T|^\frac{1}{2}U|T|^\frac{1}{2}+\lambda^2U^\ast|T|^\frac{1}{2}UU^\ast|T^\frac{1}{2}|U.
		\end{align}
		From \eqref{self3} and \eqref{self4} we obtain $$U^\ast|T|^\frac{1}{2}UU^\ast|T|^\frac{1}{2}U=U^\ast|T|U.$$
		Left-multiplying by $U$ and right-multiplying by $U^\ast$ the above equation gives
		$$UU^\ast|T|^\frac{1}{2}UU^\ast|T|^\frac{1}{2}UU^\ast=UU^\ast|T|UU^\ast.$$
		Then for any $x\in\mathcal{R}(UU^\ast)$ we have $UU^\ast|T|^\frac{1}{2}(1-UU^\ast)|T|^\frac{1}{2}x=0$, which implies, by $(1-UU^\ast)$ being a projection, that 
		\begin{align*}
		&\langle|T|^\frac{1}{2}(1-UU^\ast)|T|^\frac{1}{2}x,UU^\ast x\rangle=0\\&\Leftrightarrow\langle|T|^\frac{1}{2}(1-UU^\ast)|T|^\frac{1}{2}x,x\rangle=0\\&\Leftrightarrow\langle(1-UU^\ast)|T|^\frac{1}{2}x,(1-UU^\ast)|T|^\frac{1}{2}x\rangle=0\\&\Leftrightarrow (1-UU^\ast)|T|^\frac{1}{2}x=0.
		\end{align*}
		Thus, $|T|^\frac{1}{2}x\in\mathcal{R}(UU^\ast)$, hence $UU^\ast|T|^\frac{1}{2}UU^\ast=|T|^\frac{1}{2}UU^\ast$. Then, using the self-adjointness of $UU^\ast|T|^\frac{1}{2}UU^\ast$ and Lemma \ref{commu}, we conclude $$|T|UU^\ast=UU^\ast|T|.$$
		The proof is complete. 
	\end{proof}
	\begin{remark}
		(i) The equality $UU^\ast|T|=|T|UU^\ast$ cannot be omitted in condition (ii) of the above theorem. Consider the matrix $T= \begin{bmatrix}
			\frac{\sqrt{2}}{2}&0\\
			\frac{\sqrt{2}}{2}&0
		\end{bmatrix}$. It has the polar decomposition $T=U|T|$, where $U= \begin{bmatrix}
			\frac{\sqrt{2}}{2}&0\\
			\frac{\sqrt{2}}{2}&0
		\end{bmatrix}$ and $|T|= \begin{bmatrix}
			1&0\\
			0&0
		\end{bmatrix}$. Then we have $$U^\ast\Delta(T)=U^\ast|T|^\frac{1}{2}U|T|^\frac{1}{2}=\begin{bmatrix}
			\frac{\sqrt{2}}{2}&\frac{\sqrt{2}}{2}\\
			0&0
		\end{bmatrix}\begin{bmatrix}
			1&0\\
			0&0
		\end{bmatrix}\begin{bmatrix}
			\frac{\sqrt{2}}{2}&0\\
			\frac{\sqrt{2}}{2}&0
		\end{bmatrix}=\begin{bmatrix}
			\frac{1}{2}&0\\
			0&0
		\end{bmatrix},$$
		$$TT^\ast T^\ast T=\begin{bmatrix}
			\frac{1}{2}&\frac{1}{2}\\
			\frac{1}{2}&\frac{1}{2}
		\end{bmatrix}\begin{bmatrix}
			1&0\\
			0&0
		\end{bmatrix}=\begin{bmatrix}
			\frac{1}{2}&0\\
			\frac{1}{2}&0
		\end{bmatrix}$$
		and
		$$T^\ast T T T^\ast=\begin{bmatrix}
			1&0\\
			0&0
		\end{bmatrix}\begin{bmatrix}
			\frac{1}{2}&\frac{1}{2}\\
			\frac{1}{2}&\frac{1}{2}
		\end{bmatrix}=\begin{bmatrix}
			\frac{1}{2}&\frac{1}{2}\\
			0&0
		\end{bmatrix}.$$
		Hence, $U^\ast\Delta(T)$ is self-adjoint but not binormal for $T$. 
		
		(ii) If $\lambda=1$, then conditions (i) and (iv) in the above theorem are not equivalent in general. Indeed, consider the matrix $T$ defined in Remark \ref{duggal}. A simple calculation shows that$$TT^\ast T^\ast T=\begin{bmatrix}
			1&0\\
			0&0
		\end{bmatrix}\begin{bmatrix}
			0&0\\
			0&1
		\end{bmatrix}=\begin{bmatrix}
			0&0\\
			0&0
		\end{bmatrix}=\begin{bmatrix}
			0&0\\
			0&1
		\end{bmatrix}\begin{bmatrix}
			1&0\\
			0&0
		\end{bmatrix}=T^\ast TTT^\ast,$$
		that is, $T$ is binormal. However, it follows easily from Remark \ref{duggal} that the partial isometry in the polar decomposition of $T$ is $\begin{bmatrix}
			0&1\\
			0&0
		\end{bmatrix}$, while that of $P_{\lambda}(T)=T^D$ is $0$. Therefore, the condition (i) does not imply (iv). On the other hand, consider the polar decomposition $S=U|S|$ of $S=\frac{1}{\sqrt{2}}\begin{bmatrix}
			1&-2&0\\
			1&2&0\\
			0&0&0
		\end{bmatrix}$, where $U=\frac{1}{\sqrt{2}}\begin{bmatrix}
			1&-1&0\\
			1&1&0\\
			0&0&0
		\end{bmatrix}$ and $|S|=\begin{bmatrix}
			1&0&0\\
			0&2&0\\
			0&0&0
		\end{bmatrix}$. Direct verification gives that $P_{\lambda}(S)=S^D=|S|U=\begin{bmatrix}
			1&0&0\\
			0&2&0\\
			0&0&0
		\end{bmatrix}\begin{bmatrix}
			\frac{1}{\sqrt{2}}&\frac{-1}{\sqrt{2}}&0\\
			\frac{1}{\sqrt{2}}&\frac{1}{\sqrt{2}}&0\\
			0&0&0
		\end{bmatrix}=\begin{bmatrix}
			\frac{1}{\sqrt{2}}&\frac{-1}{\sqrt{2}}&0\\
			\sqrt{2}&\sqrt{2}&0\\
			0&0&0
		\end{bmatrix}$ has the polar decomposition $S^D=\begin{bmatrix}
			\frac{1}{\sqrt{2}}&\frac{-1}{\sqrt{2}}&0\\
			\frac{1}{\sqrt{2}}&\frac{1}{\sqrt{2}}&0\\
			0&0&0
		\end{bmatrix}\begin{bmatrix}
			\frac{3}{2}&\frac{1}{2}&0\\
			\frac{1}{2}&\frac{3}{2}&0\\
			0&0&0
		\end{bmatrix}=U|S^D|=U(U^\ast|S|^\frac{1}{2}U)^2$. Further, we have $|S^\ast|=U|S|U^\ast=\frac{1}{2}\begin{bmatrix}
			3&-1&0\\
			-1&3&0\\
			0&0&0
		\end{bmatrix}$, which yields that $$|S||S^\ast|=\frac{1}{2}\begin{bmatrix}
			3&-1&0\\
			-2&6&0\\
			0&0&0
		\end{bmatrix}\ne\frac{1}{2}\begin{bmatrix}
			3&-2&0\\
			-1&6&0\\
			0&0&0
		\end{bmatrix}=|S^\ast||S|,$$ that is, $S$ is not binormal, and so (iv) does not imply (i). Finally, it should be noted that, in view of Lemma 3.4 and Theorem 3.7 in \cite{Mat-Ope-2009}, the implication (i) $\Rightarrow$ (iv) holds when $T$ is invertible and binormal with $\lambda=1$.
	\end{remark}
	
	As a consequence of the above theorem, we characterize when a binormal operator is normal via the power mean transform.
	
	\begin{corollary}\label{normalpower}
	Let $T\in\mathbb{B}(\mathcal{H})$ be binormal. Then $T$ is normal if and only if $\mathcal{N}(T^\ast)\subseteq\mathcal{N}(T)$ and $P_{\frac{1}{2}}(T)=\Delta(T)$.
	\end{corollary}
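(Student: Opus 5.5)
Necessity is immediate, so the work is in sufficiency. If $T$ is normal, then $\mathcal{N}(T^\ast)=\mathcal{N}(T)$. Moreover $T$ is quasinormal, so $T=T^D=\Delta(T)$, and hence $P_{\frac12}(T)=\frac14T+\frac14T^D+\frac12\Delta(T)=\Delta(T)$.

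For sufficiency, let $T=U|T|$ be the polar decomposition, with $T$ binormal, $\mathcal{N}(T^\ast)\subseteq\mathcal{N}(T)$ and $P_{\frac12}(T)=\Delta(T)$. The plan has two stages: first show $T$ is quasinormal, then use the kernel condition to upgrade to normal.

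\emph{Quasinormality.} I expect this to be the main step, and I would take it from Theorem \ref{commu3}. Since $T$ is binormal, (i)$\Rightarrow$(ii) gives that $UU^\ast$ commutes with $|T|$ and that $U^\ast\Delta(T)$ is self-adjoint, i.e.\ $U^\ast|T|^{\frac12}U$ commutes with $|T|^{\frac12}$. Part (iii) with $\lambda=\frac12$ gives
\[
U^\ast P_{\frac12}(T)=|P_{\frac12}(T)|=\tfrac14\bigl(|T|^{\frac12}+U^\ast|T|^{\frac12}U\bigr)^2 .
\]
On the other hand, the hypothesis gives $U^\ast P_{\frac12}(T)=U^\ast\Delta(T)=U^\ast|T|^{\frac12}U|T|^{\frac12}$. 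Write $A=|T|^{\frac12}$ and $B=U^\ast|T|^{\frac12}U$; these are commuting positive operators. The two expressions then combine to
\[
(A+B)^2=4BA, \qquad\text{i.e.}\qquad (A-B)^2=0 .
\]
Since $A-B$ is self-adjoint, this forces $A=B$, that is, $|T|^{\frac12}=U^\ast|T|^{\frac12}U$.

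Now left-multiply by $U$. Using \eqref{squareroot}, we have $UU^\ast|T|^{\frac12}U=|T|^{\frac12}UU^\ast U=|T|^{\frac12}U$. Therefore $U|T|^{\frac12}=|T|^{\frac12}U$, and by Lemma \ref{commu}, $[U,|T|]=0$. So $T$ is quasinormal. (Equivalently, one could compute $P_{\frac12}(T)=T$ and invoke the earlier theorem on $P_\lambda(T)=T$, but the direct route above is cleaner.)

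\emph{Normality.} For a quasinormal $T$ we have
\[
TT^\ast=U|T|^2U^\ast=|T|^2UU^\ast ,
\]
using $[U,|T|]=0$. Next, $\mathcal{N}(T^\ast)\subseteq\mathcal{N}(T)$ gives $\mathcal{N}(T)^\perp\subseteq\mathcal{N}(T^\ast)^\perp$, i.e.\ $U^\ast U\le UU^\ast$. Then $|T|^2UU^\ast=|T|^2U^\ast U\,UU^\ast$, because $|T|^2=|T|^2U^\ast U$, and $U^\ast U\,UU^\ast=U^\ast U$ since the range of $U^\ast U$ lies in that of $UU^\ast$. Hence $TT^\ast=|T|^2U^\ast U=|T|^2=T^\ast T$, and $T$ is normal.

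The delicate point is justifying $(A+B)^2=4BA$ as an operator identity. This needs part (iii) of Theorem \ref{commu3} with its exact formula for $|P_\lambda(T)|$, together with the relation $U^\ast U\,|P_\lambda(T)|=|P_\lambda(T)|$, both of which that theorem provides.
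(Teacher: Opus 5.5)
Your proposal is correct and follows essentially the same route as the paper: Theorem \ref{commu3} (ii)--(iii) turns $P_{\frac12}(T)=\Delta(T)$ into $(|T|^{\frac12}-U^\ast|T|^{\frac12}U)^2=0$, which gives $[U,|T|]=0$, and the inclusion $\mathcal{N}(T^\ast)\subseteq\mathcal{N}(T)$ then yields $TT^\ast=T^\ast T$. The differences are cosmetic: you compare $U^\ast P_{\frac12}(T)$ with $U^\ast\Delta(T)$ instead of $|P_{\frac12}(T)|$ with $|\Delta(T)|$, and you verify $|T|^2UU^\ast=|T|^2$ directly instead of citing a lemma.
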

	
	Before proving this corollary, the following example shows that when $\lambda\ne\frac{1}{2}$, even if $T$ is a binormal operator satisfying $\mathcal{N}(T^\ast)\subseteq\mathcal{N}(T)$ and $P_{\lambda}(T)=\Delta(T)$, $T$ need not be normal; hence the condition $\lambda=\frac{1}{2}$ is necessary. Moreover, the equation $P_{\frac{1}{2}}(T)=\Delta(T)$ does not necessarily imply $\mathcal{N}(T^\ast)\subseteq\mathcal{N}(T)$.
	\begin{example}
	Let $\lambda\in(0,1)$ and $W_{\alpha}$ be the bilateral weighted shift on $l^{2}(\mathbb{Z})$ defined by $W_{\alpha}e_{n}=\alpha_{n}e_{n+1}$ for $n\in\mathbb{Z}$, where $\alpha_{n}=\begin{cases}
		1&n\le0\\
		(\frac{1-\lambda}{\lambda})^4&n\ge1
	\end{cases}.$ Then $W_{\alpha}$ has the polar decomposition $W_{\alpha}=U|W_{\alpha}|$, where $Ue_{n}=e_{n+1}$ and $|W_{\alpha}|e_{n}=\alpha_{n}e_{n}$ for $n\in\mathbb{Z}$. We can easily obtain that $\mathcal{N}(W_{\alpha}^\ast)=\mathcal{N}(U^\ast)=\mathcal{N}(U)=\mathcal{N}(W_{\alpha})=\{0\}$. On the other hand, from \eqref{meanuni} we have 
	\begin{align*}
		P_{\lambda}(W_{\alpha})e_{n}&=((1-\lambda)\sqrt{\alpha_{n}}+\lambda\sqrt{\alpha_{n+1}})^2e_{n+1}\\&=\widetilde{\alpha_{n}}e_{n+1}\\&=\sqrt{\alpha_{n}\alpha_{n+1}}e_{n+1}=|W_{\alpha}|^\frac{1}{2}U|W_{\alpha}|^\frac{1}{2}e_{n}=\Delta(W_{\alpha})e_{n}
	\end{align*}
	where $\widetilde{\alpha_{n}}=\begin{cases}
		1&n\le-1\\(\frac{1-\lambda}{\lambda})^2&n=0\\(\frac{1-\lambda}{\lambda})^4&n\ge1
	\end{cases}$. However, one readily finds that $W_{\alpha}$ is normal if and only if $\lambda=\frac{1}{2}$. Thus, we know that for $\lambda\ne\frac{1}{2}$, $W_{\alpha}$ satisfies $\mathcal{N}(W_{\alpha}^\ast)\subseteq\mathcal{N}(W_{\alpha})$ and $P_{\lambda}(T)=\Delta(T)$ but $W_{\alpha}$ is not normal. Finally, we may consider the unilateral shift operator $T=\operatorname{shift}(1,1,1,\cdots)$; it is easy to verify that $P_{\frac{1}{2}}(T)=\Delta(T)=T$ but $\mathcal{N}(T)=\{0\}\ne\mathcal{N}(T^\ast)$, i.e., $\mathcal{N}(T^\ast)\nsubseteq\mathcal{N}(T)$. We conclude that $P_{\frac{1}{2}}(T)=\Delta(T)$ does not imply $\mathcal{N}(T^\ast)\subseteq\mathcal{N}(T)$.
	\end{example}
	Now, we prove Corollary \ref{normalpower}
	\begin{proof}[Proof of Corollary \ref{normalpower}]
	\textit{Sufficiency}. Since $T$ is normal, it is quasinormal, so $P_{\frac{1}{2}}(T)=\Delta(T)=T$. The inclusion $\mathcal{N}(T^\ast)\subseteq\mathcal{N}(T)$ is obvious, as $\|Tx\|=\|T^\ast x\|$ for all $x$.

	\textit{Necessity}. Since $T$ is binormal, Theorem \ref{commu3} (ii) gives that
	$U^\ast\Delta(T)$ is self-adjoint, i.e., $[|T|^\frac{1}{2},U^\ast|T|^\frac{1}{2}U]=0$. Using $UU^\ast|T|=|T|UU^\ast$ and $UU^\ast U=U$, a direct computation yields 
	\begin{align*}
	|\Delta(T)|=|T|^\frac{1}{2}U^\ast|T|^\frac{1}{2}U.
	\end{align*}
The hypothesis $P_{\frac{1}{2}}(T)=\Delta(T)$ implies $|P_{\frac{1}{2}}(T)|=|\Delta(T)|$. By Theorem \ref{commu3} (iii), $$(\frac{1}{2}|T|^\frac{1}{2}+\frac{1}{2}U^\ast|T|^\frac{1}{2}U)^2=|T|^\frac{1}{2}U^\ast|T|^\frac{1}{2}U,$$ which is equivalent to $(|T|^\frac{1}{2}-U^\ast|T|^\frac{1}{2}U)^2=0$. Hence, $$|T|^\frac{1}{2}=U^\ast|T|^\frac{1}{2}U.$$
Multiplying the above equality on the right side by $U^\ast$ and using $U^\ast U U^\ast=U^\ast$ together with $UU^\ast|T|=|T|UU^\ast$ we obtain $|T|U^\ast=U^\ast|T|$. Then by Lemma \ref{commu}, $|T|^2U^\ast=U^\ast|T|^2$. Consequently, $$TT^\ast=U|T|^2U^\ast=UU^\ast|T|^2=|T|^2=T^\ast T,$$
where the third equality follows from $\mathcal{N}(T^\ast)\subseteq\mathcal{N}(T)$ (see \cite[Lemma 2.14]{C.C.M}). The proof is complete.
	\end{proof}
	
An operator $T\in\mathbb{B}(\mathcal{H})$ is said to be {\em $p$-hyponormal} (resp. {\em co-$p$-hyponormal}) if $(T^\ast T)^p\ge(TT^\ast)^p$ (resp. $(T^\ast T)^p\le(TT^\ast)^p$). In particular, when $p=1$, $T$ is called {\em hyponormal} (resp. {\em co-hyponormal}; when $p=\frac{1}{2}$, $T$ is called {\em semi-hyponormal} (resp. {\em co-semi-hyponormal}). Chabbabi proved in \cite[Theorem 2.9]{chabbabi-pams-2023} that if $T$ is co-hyponormal and binormal, then $M(T)$ is co-hyponormal. Below we present some results close to those of Chabbabi and give a negative answer to Question 10 in \cite{yamazaki-laa-2023}.	
	\begin{corollary}\label{hy10}
	Let $\lambda\in(0,1)$. Let $T$ be a binormal operator with the polar decomposition $T=U|T|$. Then 
	\begin{itemize}
	\item[(i)] $(P_{\lambda}(T))^\ast P_{\lambda}(T)\le T^\ast T$ if and only if $U^\ast|T|U\le|T|$.
	\item[(ii)] If $|T|\ge|T^\ast|$, then $|(P_{\lambda}(T))^\ast|\le|T|\le|P_{\lambda}(T)|$.
	\end{itemize}
	\end{corollary}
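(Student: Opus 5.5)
Throughout, write $A=|T|^{\frac12}$ and $B=U^\ast|T|^{\frac12}U$. The plan is to reduce both parts to pointwise inequalities inside a commutative $C^\ast$-algebra, using the binormality of $T$ through Theorem \ref{commu3}.

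Since $T$ is binormal, Theorem \ref{commu3} supplies three facts. First, $UU^\ast|T|=|T|UU^\ast$. Second, $U^\ast\Delta(T)$ is self-adjoint, which means exactly $[A,B]=0$. Third, $P_{\lambda}(T)=U|P_{\lambda}(T)|$ is the polar decomposition, with $|P_{\lambda}(T)|=((1-\lambda)A+\lambda B)^2$. The proof of (i)$\Rightarrow$(ii) in Theorem \ref{commu3} also gives $B^2=U^\ast|T|U$, by \eqref{squareroot2}. Hence $A$ and $B$ are commuting positive operators and generate a commutative $C^\ast$-algebra. By the Gelfand representation, order between elements of this algebra is pointwise order of nonnegative continuous functions. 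In particular, for commuting positive $X,Y$ in it and $p>0$, we have $X\le Y$ if and only if $X^p\le Y^p$.

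\emph{Part (i).} We have $(P_{\lambda}(T))^\ast P_{\lambda}(T)=|P_{\lambda}(T)|^2=((1-\lambda)A+\lambda B)^4$ and $T^\ast T=A^4$. All of these lie in the commutative algebra, so the inequality is equivalent to $(1-\lambda)A+\lambda B\le A$. Since $\lambda>0$, this is equivalent to $B\le A$, hence to $B^2\le A^2$, that is, $U^\ast|T|U\le|T|$.

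\emph{Part (ii), right-hand inequality.} From $|T|\ge|T^\ast|=U|T|U^\ast$, conjugate by $U^\ast$ and use $U^\ast U|T|U^\ast U=|T|$ from \eqref{polarbasic}. This gives $B^2=U^\ast|T|U\ge|T|=A^2$. Commutativity then yields $B\ge A$, so $(1-\lambda)A+\lambda B\ge A$. Squaring in the commutative algebra gives $|P_{\lambda}(T)|\ge|T|$.

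\emph{Part (ii), left-hand inequality.} Here $|(P_{\lambda}(T))^\ast|=U|P_{\lambda}(T)|U^\ast$, because $U$ is the partial isometry of the polar decomposition of $P_\lambda(T)$. Expand $U((1-\lambda)A+\lambda B)^2U^\ast$ term by term:
\begin{itemize}
\item $UA^2U^\ast=|T^\ast|$;
\item $UB^2U^\ast=UU^\ast|T|UU^\ast=|T|UU^\ast$;
\item $UABU^\ast=U|T|^{\frac12}U^\ast|T|^{\frac12}UU^\ast=|T^\ast|^{\frac12}|T|^{\frac12}UU^\ast$.
\end{itemize}
The last identity uses $U|T|^{\frac12}U^\ast=|T^\ast|^{\frac12}$ and \eqref{squareroot}. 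Now $|T|$, $|T^\ast|$ and $UU^\ast$ pairwise commute: binormality with Lemma \ref{commu} gives the first pair, and Lemma \ref{commu8} gives the pairs involving $UU^\ast$. Also $|T^\ast|^{\frac12}UU^\ast=|T^\ast|^{\frac12}$. So everything lives in one commutative algebra, and
$$U|P_{\lambda}(T)|U^\ast=\big((1-\lambda)|T^\ast|^{\frac12}+\lambda|T|^{\frac12}\big)^2UU^\ast.$$
The hypothesis $|T^\ast|\le|T|$ gives $|T^\ast|^{\frac12}\le|T|^{\frac12}$ in this algebra. Hence the right side is at most $|T|\,UU^\ast$, which is at most $|T|$ because $UU^\ast$ is a projection commuting with $|T|$.

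The main obstacle is this last expansion. One has to justify each commutation, in particular that $UU^\ast$ commutes with $|T^\ast|^{\frac12}$ and with $|T|^{\frac12}$, and that the cross terms combine correctly into the square above. Once everything is placed in a single commutative $C^\ast$-algebra, the remaining inequalities are routine pointwise comparisons.
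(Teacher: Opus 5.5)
Your proof is correct and follows essentially the same route as the paper: binormality, via Theorem \ref{commu3}, supplies $[|T|^{1/2},U^\ast|T|^{1/2}U]=0$, $(U^\ast|T|^{1/2}U)^2=U^\ast|T|U$ and $[UU^\ast,|T|]=0$, and both parts then reduce to order comparisons between commuting positive operators. The only difference is packaging: you argue inside commutative $C^\ast$-algebras and write $|(P_\lambda(T))^\ast|=((1-\lambda)|T^\ast|^{1/2}+\lambda|T|^{1/2})^2UU^\ast$ in closed form, whereas the paper uses Loewner--Heinz and compares each of the three expanded terms of $|P_\lambda(T)|$ and $U|P_\lambda(T)|U^\ast$ with $|T|$ separately.
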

	\begin{proof}
	(i) By the Loewner-Heinz inequality and \eqref{alucom}, we have $$(P_{\lambda}(T))^\ast P_{\lambda}(T)\le T^\ast T\Leftrightarrow|P_{\lambda}(T)|^\frac{1}{2}\le|T|^\frac{1}{2}.$$
	Combining Theorem \ref{commu3} with \eqref{alucom} and \eqref{squareroot2}, the latter inequality is clearly equivalent to $U^\ast|T|U\le|T|$.

(ii) By \eqref{alucom} and \eqref{squareroot}, we have
\begin{equation}\label{hy1}
[U|T|^\frac{1}{2}U^\ast,|T|^\frac{1}{2}]=0.
\end{equation}
  From $|T^\ast|=U|T|U^\ast$ and $|T|\ge|T^
\ast|$, the Loewner-Heinz inequality yields
\begin{equation}\label{hy2}
|T|^\frac{1}{2}\ge U|T|^\frac{1}{2}U^\ast.
\end{equation}
Combining \eqref{hy1}, \eqref{hy2}, \eqref{squareroot} and \eqref{alucom} gives
\begin{equation}\label{hy5}
	|T|=|T|^\frac{1}{2}|T|^\frac{1}{2}\ge U|T|^\frac{1}{2}U^\ast|T|^\frac{1}{2}=U|T|^\frac{1}{2}U^\ast|T|^\frac{1}{2}U U^\ast=UU^\ast|T|^\frac{1}{2}U|T|^\frac{1}{2}U^\ast.
\end{equation}
Theorem \ref{commu3} implies
\begin{equation}\label{hy6}
\begin{aligned}
	|T|-UU^\ast|T|UU^\ast&=|T|-|T|UU^\ast-UU^\ast|T|+UU^\ast|T|UU^\ast\\&=(|T|-UU^\ast|T|)(1-UU^\ast)=(1-UU^\ast)|T|(1-UU^\ast)\ge0.
\end{aligned}	
\end{equation}
From $|T|\ge U|T|U^\ast$ and \eqref{alucom1} we get
\begin{equation}\label{hy3}
U^\ast|T|U\ge|T|
\end{equation}
and hence, by the Loewner-Heinz inequality and \eqref{squareroot2}, \begin{equation}\label{hy4}
U^\ast|T|^\frac{1}{2}U\ge|T|^\frac{1}{2}.
\end{equation}
Therefore, using \eqref{alucom}, \eqref{hy3}, \eqref{hy4}, \eqref{hy5} and \eqref{hy6} we obtain
\begin{align*}
	|P_{\lambda}(T)|&=(1-\lambda)^2|T|+\lambda^2U^\ast|T|U+2\lambda(1-\lambda)U^\ast|T|^\frac{1}{2}U|T|^\frac{1}{2}\\&\ge(1-\lambda)^2|T|+\lambda^2|T|+2\lambda(1-\lambda)|T|\\&\ge|T|\\&\ge(1-\lambda)^2U|T|U^\ast+\lambda^2UU^\ast|T|UU^\ast+2\lambda(1-\lambda)UU^\ast|T|^\frac{1}{2}U|T|^\frac{1}{2}U^\ast\\&=U|P_{\lambda}(T)|U^\ast=|(P_{\lambda}(T))^\ast|,
\end{align*}as desired.	
	\end{proof}
\begin{remark}\label{rkbin}
(i) Let $T=U|T|$ be the polar decomposition. Golla et al. asked in \cite[Question 10]{yamazaki-laa-2023} whether $(\Delta_{\mathsf{m}_{f}}(T))^\ast\Delta_{\mathsf{m}_{f}}(T)\le T^\ast T$ holds whenever $UU^\ast|T|=|T|$. For invertible $T$, we have $P_{\lambda}(T)=\Delta_{\mathsf{m}_{f}}(T)$ with $f(x)=(\lambda+(1-\lambda)\sqrt{x})^2$ on $(0,\infty)$. Hence Corollary \ref{hy10} (i) yields that for an invertible binormal operator $T$, $(\Delta_{\mathsf{m}_{f}}(T))^\ast\Delta_{\mathsf{m}_{f}}(T)\le T^\ast T\Leftrightarrow U^\ast|T|U\le|T|\Leftrightarrow|T|\le|T^\ast|\Leftrightarrow T$ is co-semi-hyponormal. Thus, if $T$ is invertible, binormal, but not co-semi-hyponormal, the inequality fails. Such an operator exists, for example the bilateral weighted shift $W_{\alpha}$ on $l^2(\mathbb{Z})$ with $\alpha_{n}=\begin{cases}
2&n\le0\\
1&n\ge1
\end{cases}$. Therefore, the answer to the question of Golla et al. is negative.

(ii) Corollary \ref{hy10} (ii) shows that for a binormal semi-hyponormal operator $T$, its power mean transform $P_{\lambda}(T)$ remains semi-hyponormal for $\lambda\in(0,1)$.
\end{remark}	
	
	The following example shows that if $T$ is binormal, then for $\lambda\in(0,1)$ the power mean transform $P_{\lambda}(T)$ may fail to be binormal.
	\begin{example}\label{exmaple1}
		Let $T$ be the matrix $\begin{bmatrix}
			0&0&5\\
			\frac{1}{2}&\frac{\sqrt{3}}{2}&0\\
			\frac{\sqrt{3}}{2}&\frac{-1}{2}&0
		\end{bmatrix}$ with the polar decomposition $T=U|T|$, where 
		$|T|=\begin{bmatrix}
			1&0&0\\
			0&1&0\\
			0&0&5
		\end{bmatrix}$ and $U=T|T|^{-1}=\begin{bmatrix}
			0&0&1\\
			\frac{1}{2}&\frac{\sqrt{3}}{2}&0\\
			\frac{\sqrt{3}}{2}&\frac{-1}{2}&0
		\end{bmatrix}$. Then we have $$|T||T^\ast|=|T|U|T|U^\ast=\begin{bmatrix}
			5&0&0\\
			0&1&0\\
			0&0&5
		\end{bmatrix}=U|T|U^\ast|T|=|T^\ast||T|,$$i.e., $T$ is binormal. Let $\lambda\in(0,1)$. It follows from Theorem \ref{commu3} (iii) that $P_{\lambda}(T)=U|P_{\lambda}(T)|$ is the polar decomposition of $P_{\lambda}(T)$ and
		\begin{align*}
			|P_{\lambda}(T)|^\frac{1}{2}=(1-\lambda)|T|^\frac{1}{2}+\lambda U^\ast|T|^\frac{1}{2}U&=(1-\lambda)\begin{bmatrix}
				1&0&0\\
				0&1&0\\
				0&0&\sqrt{5}
			\end{bmatrix}+\lambda\begin{bmatrix}
				\frac{1+3\sqrt{5}}{4}&\frac{\sqrt{3}-\sqrt{15}}{4}&0\\
				\frac{\sqrt{3}-\sqrt{15}}{4}&\frac{3+\sqrt{5}}{4}&0\\
				0&0&1
			\end{bmatrix}\\&=\begin{bmatrix}
				(1-\lambda)+\lambda\frac{1+3\sqrt{5}}{4}&\lambda\frac{\sqrt{3}-\sqrt{15}}{4}&0\\
				\lambda\frac{\sqrt{3}-\sqrt{15}}{4}&(1-\lambda)+\lambda\frac{3+\sqrt{5}}{4}&0\\
				0&0&(1-\lambda)\sqrt{5}+\lambda
			\end{bmatrix}.
		\end{align*}
		Thus, a direct computation gives
		\begin{align*}
			|P_{\lambda}(T)|^\frac{1}{2}|(P_{\lambda}(T))^\ast|^\frac{1}{2}&=|P_{\lambda}(T)|^\frac{1}{2}U|P_{\lambda}(T)|^\frac{1}{2}U^\ast=\begin{bmatrix}
				*&*&*\\
				\frac{\sqrt{3}-\sqrt{15}}{4}\lambda(\sqrt{5}(1-\lambda)+\lambda)&*&*\\
				*&*&*
			\end{bmatrix}\\&\ne\begin{bmatrix}
				*&*&*\\
				\frac{\sqrt{3}-\sqrt{15}}{4}\lambda&*&*\\
				*&*&*
			\end{bmatrix}=U|P_{\lambda}(T)|^\frac{1}{2}U^\ast|P_{\lambda}(T)|^\frac{1}{2}=|(P_{\lambda}(T))^\ast|^\frac{1}{2}|P_{\lambda}(T)|^\frac{1}{2},
		\end{align*}
		i.e. $P_{\lambda}(T)$ is not binormal. The following result shows that the essential reason why $P_{\lambda}(T)$ fails to be binormal is that
		$$U^\ast|T|U|T^\ast|=\begin{bmatrix}
			20&-\sqrt{3}&0\\
			-5\sqrt{3}&2&0\\
			0&0&1
		\end{bmatrix}\ne\begin{bmatrix}
			20&-5\sqrt{3}&0\\
			-\sqrt{3}&2&0\\
			0&0&1
		\end{bmatrix}=|T^\ast|U^\ast|T|U.$$
	\end{example}
	
	Now we characterize when the power mean transform preserves operator binormality.
	
	\begin{corollary}\label{commu7}
		Let $T\in\mathbb{B}(\mathcal{H})$ be binormal with the polar decomposition $T=U|T|$ and $\lambda\in(0,1)$. Then the following conditions are equivalent:
		\begin{itemize}
			\item[(i)] $P_{\lambda}(T)$ is binormal.
			\item[(ii)] $[U^\ast|T|U,|T^\ast|]=0$.
		\end{itemize}
	\end{corollary}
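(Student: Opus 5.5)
By Theorem~\ref{commu3}, since $T$ is binormal and $\lambda\in(0,1)$, the polar decomposition of $P_{\lambda}(T)$ is $P_{\lambda}(T)=U|P_{\lambda}(T)|$ with $|P_{\lambda}(T)|=A^2$, where
$$A\triangleq(1-\lambda)|T|^{\frac12}+\lambda B,\qquad B\triangleq U^\ast|T|^{\frac12}U .$$
The plan is to translate binormality of $P_{\lambda}(T)$ into a commutator condition on $A$, expand that commutator, and use the binormality identities already proved for $T$ to kill every term except one.

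\emph{Reduction to $A$.} By Campbell's criterion, $P_{\lambda}(T)$ is binormal if and only if $[|P_{\lambda}(T)|,U|P_{\lambda}(T)|U^\ast]=0$. We have $\mathcal{N}(A)=\mathcal{N}(U)$, as shown in the proof of (ii)$\Rightarrow$(iii) of Theorem~\ref{commu3}. Hence $AU^\ast U=A$, and therefore
$$(UAU^\ast)^2=UA^2U^\ast=U|P_{\lambda}(T)|U^\ast .$$
So $P_{\lambda}(T)$ is binormal iff $[A^2,(UAU^\ast)^2]=0$. Applying Lemma~\ref{commu} twice (taking positive square roots in each slot), this is equivalent to $[A,UAU^\ast]=0$.

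\emph{Expanding $UAU^\ast$.} We have $U|T|^{\frac12}U^\ast=|T^\ast|^{\frac12}$. By \eqref{squareroot}, $UU^\ast$ commutes with $|T|^{\frac12}$, so $UBU^\ast=UU^\ast|T|^{\frac12}UU^\ast=|T|^{\frac12}UU^\ast$. Thus
$$UAU^\ast=(1-\lambda)|T^\ast|^{\frac12}+\lambda|T|^{\frac12}UU^\ast .$$
The commutator $[A,UAU^\ast]$ then splits into four pieces, which I handle one at a time.
\begin{itemize}
\item $(1-\lambda)^2[|T|^{\frac12},|T^\ast|^{\frac12}]$ vanishes by binormality of $T$ together with Lemma~\ref{commu}.
\item $\lambda(1-\lambda)[|T|^{\frac12},|T|^{\frac12}UU^\ast]$ vanishes by \eqref{squareroot}.
\item $\lambda(1-\lambda)[B,|T^\ast|^{\frac12}]$ is the surviving term.
\item $\lambda^2[B,|T|^{\frac12}UU^\ast]$ should vanish. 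By \eqref{alucom}, $B$ commutes with $|T|^{\frac12}$, so it suffices to show that $B$ commutes with $UU^\ast$, or at least that $[B,|T|^{\frac12}UU^\ast]=0$ directly.
\end{itemize}

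\emph{Main obstacle.} The last item is the step I expect to be hardest. My first attempt is to rewrite $|T|^{\frac12}UU^\ast=UU^\ast|T|^{\frac12}UU^\ast$ and use \eqref{alucom} and \eqref{squareroot}. Failing that, I would use Lemma~\ref{commu8}: apply it to the commuting positive operators $B$ and $|T|^{\frac12}$ to move kernel projections across, combined with $B=U^\ast U\,B\,U^\ast U$. If this term does not vanish outright, I would instead carry it along and show it is controlled by the same condition as the surviving term.

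\emph{Conclusion.} Granting the above, $P_{\lambda}(T)$ is binormal iff $[B,|T^\ast|^{\frac12}]=0$, since $\lambda(1-\lambda)\neq0$. By \eqref{squareroot2}, $B^2=U^\ast|T|U$. Lemma~\ref{commu}, applied in both slots, then turns $[B,|T^\ast|^{\frac12}]=0$ into $[U^\ast|T|U,|T^\ast|]=0$, which is condition (ii).
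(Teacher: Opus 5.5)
There is a genuine gap, at exactly the step you flagged. The fourth term does not vanish for all binormal $T$, so it cannot be removed using identities that hold for every binormal operator, and your ``Conclusion'' paragraph is not justified. By \eqref{alucom}, $[B,|T|^{\frac12}UU^\ast]=|T|^{\frac12}[B,UU^\ast]$, and $[B,UU^\ast]=0$ does not follow from binormality.

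For a counterexample, take on $\mathbb{C}^3$ the partial isometry $T=U$ with $Ue_1=\frac{1}{\sqrt2}(e_2+e_3)$, $Ue_2=\frac{1}{\sqrt2}(e_2-e_3)$, $Ue_3=0$. Then $|T|=\operatorname{diag}(1,1,0)$ and $|T^\ast|=\operatorname{diag}(0,1,1)$ commute, so $T$ is binormal. However, $B$ is the projection onto $\frac{1}{\sqrt2}(e_1+e_2)$, $|T|^{\frac12}UU^\ast=\operatorname{diag}(0,1,0)$, and
\[
[B,|T|^{\frac12}UU^\ast]=\frac12\begin{bmatrix}0&1&0\\-1&0&0\\0&0&0\end{bmatrix}\neq0.
\]
So neither of your first two remedies can work. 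Rewriting via \eqref{alucom} and \eqref{squareroot} would prove a false identity. Lemma~\ref{commu8} applied to $B$ and $|T|^{\frac12}$ only gives commutation with the support projections of $B$ and of $|T|^{\frac12}$ (the latter is $U^\ast U$), never with $UU^\ast$. Your fallback is not an argument either: for (i)$\Rightarrow$(ii) the term cannot be ``controlled by the same condition as the surviving term'', because that condition is what you are trying to prove.

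The missing idea is that $[B,UU^\ast]=0$ must be derived from (i) and from (ii) separately. Lemma~\ref{commu8} does this once you apply it to a pair in which one member has support projection $UU^\ast$.

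\begin{itemize}
\item Under (ii), apply it to $U^\ast|T|U$ and $|T^\ast|$, noting $P_{\mathcal{N}(|T^\ast|)^\perp}=UU^\ast$. This gives $[U^\ast|T|U,UU^\ast]=0$, hence $[B,UU^\ast]=0$ by \eqref{squareroot2} and Lemma~\ref{commu}. This is what the paper does.
\item Under (i), you have $[A,UAU^\ast]=0$, and $\mathcal{N}(UAU^\ast)=\mathcal{N}(U^\ast)$ because $\mathcal{N}(A)=\mathcal{N}(U)$. So Lemma~\ref{commu8} gives $[A,UU^\ast]=0$. Subtracting $(1-\lambda)[|T|^{\frac12},UU^\ast]=0$ (from \eqref{squareroot}) leaves $[B,UU^\ast]=0$, since $\lambda\neq0$. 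The paper instead obtains this relation from Theorem~\ref{commu3}(ii) applied to the binormal operator $P_\lambda(T)$, and then works with $U^\ast\Delta(P_\lambda(T))$.
\end{itemize}

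With $[B,UU^\ast]=0$ available in both directions, the fourth term drops out and $[A,UAU^\ast]=\lambda(1-\lambda)[B,|T^\ast|^{\frac12}]$. The rest of your argument then goes through. Once patched this way, your reduction of binormality of $P_\lambda(T)$ to $[A,UAU^\ast]=0$ via Campbell's criterion is a tidier route than the paper's chain of commutator identities.
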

	\begin{proof}
		From Lemma \ref{commu} and Theorem \ref{commu3} we know that 
		\begin{equation}\label{self}
			UU^\ast|T|^\frac{1}{2}=|T|^\frac{1}{2}UU^\ast,
		\end{equation} 
		\begin{equation}\label{self5}
			U^\ast|T|^\frac{1}{2}U|T|^\frac{1}{2}=|T|^\frac{1}{2}U^\ast|T|^\frac{1}{2}U
		\end{equation}
		and $P_{\lambda}(T)=U|P_{\lambda}(T)|$ is the polar decomposition of $P_{\lambda}(T)$, where
		\begin{equation}\label{positive1}
			|P_{\lambda}(T)|=((1-\lambda)|T|^\frac{1}{2}+\lambda U^\ast|T|^{\frac{1}{2}}U)^2.
		\end{equation}
		
		(i) $\Rightarrow$ (ii): Suppose $P_{\lambda}(T)$ is binormal. Applying Theorem \ref{commu3} (ii) to $P_{\lambda}(T)$ yields that $UU^\ast|P_{\lambda}(T)|^\frac{1}{2}=|P_{\lambda}(T)|^\frac{1}{2}UU^\ast$. Then, together with \eqref{self} and \eqref{positive1}, we get  
		\begin{equation}\label{self7}
			[UU^\ast,U^\ast|T|^\frac{1}{2}U]=0.
		\end{equation}
		Combining $U^\ast[UU^\ast,U^\ast|T|^\frac{1}{2}U]U=0$ with \eqref{self}, \eqref{self5} and $U^\ast UU^\ast=U^\ast$, we have
		\begin{equation}\label{self6}
			[U^\ast U^\ast|T|^\frac{1}{2}UU,U^\ast|T|^\frac{1}{2}U]=0.
		\end{equation}
		On the other hand, applying Theorem \ref{commu3} again to $P_{\lambda}(T)$, we obtain that $U^\ast\Delta(P_{\lambda}(T))$ is self-adjoint, i.e.,$$ U^\ast|P_{\lambda}(T)|^\frac{1}{2}U|P_{\lambda}(T)|^\frac{1}{2}=|P_{\lambda}(T)|^\frac{1}{2}U^\ast|P_{\lambda}(T)|^\frac{1}{2}U$$
		Substituting \eqref{positive1} into the above equation gives 
		\begin{equation}\label{expan}
			\begin{aligned}
				&((1-\lambda)U^\ast|T|^\frac{1}{2}U+\lambda U^\ast U^\ast|T|^\frac{1}{2}UU)((1-\lambda)|T|^\frac{1}{2}+\lambda U^\ast|T|^\frac{1}{2}U)\\&=((1-\lambda)|T|^\frac{1}{2}+\lambda U^\ast|T|^\frac{1}{2}U)((1-\lambda)U^\ast|T|^\frac{1}{2}U+\lambda U^\ast U^\ast|T|^\frac{1}{2}UU).
			\end{aligned}
		\end{equation}
		Substituting \eqref{self5} and \eqref{self6} into the expansion of \eqref{expan}, we get $[U^\ast U^\ast|T|^\frac{1}{2}UU,|T|^\frac{1}{2}]=0$. Then combining $U[U^\ast U^\ast|T|^\frac{1}{2}UU,|T|^\frac{1}{2}]U^\ast=0$ with \eqref{self7} and $UU^\ast U=U$, we have
		$$[U^\ast|T|^\frac{1}{2}U,U|T|^\frac{1}{2}U^\ast]=0.$$
		Together with \eqref{squareroot2} and Lemma \ref{commu}, the above equation yields $[U^\ast|T|U,U|T|U^\ast]=0$, i.e., $[U^\ast|T|U,|T^\ast|]=0$.
		
		(ii) $\Rightarrow$ (i): From $UU^\ast U=U$, \eqref{self} and \eqref{self5} we have
		\begin{align*}
			|T|^\frac{1}{2}U|T|^\frac{1}{2}U^\ast=U(U^\ast|T|^\frac{1}{2}U|T|^\frac{1}{2})U^\ast=U|T|^\frac{1}{2}U^\ast(|T|^\frac{1}{2}UU^\ast)=U|T|^\frac{1}{2}U^\ast|T|^\frac{1}{2},
		\end{align*}
		i.e.,
		\begin{equation}\label{positive2}
			[|T|^\frac{1}{2},U|T|^\frac{1}{2}U^\ast]=0.
		\end{equation}
		Using the equation $UU^\ast=P_{\overline{\mathcal{R}(|T^\ast|)}}=P_{\mathcal{N}(|T^\ast|)^\perp}$ and the condition 
		\begin{equation}\label{commu5}
			[U^\ast|T|U,|T^\ast|]=[U^\ast|T|U,U|T|U^\ast]=0
		\end{equation}together with Lemma \ref{commu8}, we obtain
		$[U^\ast|T|U,UU^\ast]=0$ and thus 
		\begin{equation}\label{commu4}
			[U^\ast|T|^\frac{1}{2}U,UU^\ast]=0.
		\end{equation}
		Therefore, by \eqref{commu4} and \eqref{self5}we have
		\begin{align*}
			(U^\ast|T|^\frac{1}{2}UUU^\ast)|T|^\frac{1}{2}UU^\ast=UU^\ast (U^\ast|T|^\frac{1}{2}U|T|^\frac{1}{2}U)U^\ast&=UU^\ast|T|^\frac{1}{2}(U^\ast|T|^\frac{1}{2}UUU^\ast)\\&=UU^\ast|T|^\frac{1}{2}UU^\ast U^\ast|T|^\frac{1}{2}U,
		\end{align*}
		i.e., 
		\begin{equation}\label{positive3}
			[U^\ast|T|^\frac{1}{2}U,UU^\ast|T|^\frac{1}{2}UU^\ast]=0.
		\end{equation}
		Moreover, from \eqref{self} and \eqref{commu5} we easily obtain 
		\begin{equation}\label{positive4}
			[U^\ast|T|^\frac{1}{2}U,U|T|^\frac{1}{2}U^\ast]=0
		\end{equation}
		and
		\begin{equation}\label{positive5}
			[|T|^\frac{1}{2},UU^\ast|T|^\frac{1}{2}UU^\ast]=0].
		\end{equation}
		Recall that $P_{\lambda}(T)=U|P_{\lambda}(T)|$ is the polar decompositio of $P_{\lambda}(T)$. Then we have
		\begin{equation}\label{positive6}
			|(P_{\lambda}(T))^\ast|^\frac{1}{2}=U|P_{\lambda}(T)|^\frac{1}{2}U^\ast.
		\end{equation}
		Finally, a direct computation using \eqref{positive1}, \eqref{positive6}, \eqref{positive2}, \eqref{positive3}, \eqref{positive4} and \eqref{positive5} yields$$|P_{\lambda}(T)|^\frac{1}{2}|(P_{\lambda}(T))^\ast|^\frac{1}{2}=|(P_{\lambda}(T))^\ast|^\frac{1}{2}|P_{\lambda}(T)|^\frac{1}{2},$$that is, $P_{\lambda}(T)$ is binormal.
		
		The proof is complete.
	\end{proof}

	To characterize when the Duggal transform $P_{1}(T)=T^D$ of a binormal operator $T$ is binormal, we need the theorem below.
	
	\begin{theorem}\cite[Theorem 3.7 and 3.8]{Mat-Ope-2009}\label{matoper}
		Let $T\in\mathbb{B}(\mathcal{H})$ be binormal with the polar decomposition $T=U|T|$. Then $T^D=U^\ast UU|T^D|$ is the polar decomposition of $T^D$ and $[UU^\ast,U^\ast U]=0$, where $U^\ast UU$ is the partial isometry part.
	\end{theorem}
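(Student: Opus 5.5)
The plan is to verify the two assertions directly from the polar-decomposition identities \eqref{polarbasic} and the consequences of binormality already obtained in the proof of Theorem \ref{commu3}. Write $P=U^\ast U=P_{\overline{\mathcal{R}(|T|)}}$, $Q=UU^\ast=P_{\overline{\mathcal{R}(|T^\ast|)}}$ and $V=U^\ast UU=PU$.

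\emph{Step 1: $[Q,P]=0$.} Binormality gives $[|T|,|T^\ast|]=0$ by Lemma \ref{commu}. Since $P=P_{\mathcal{N}(|T|)^\perp}$ and $Q=P_{\mathcal{N}(|T^\ast|)^\perp}$, Lemma \ref{commu8} yields $[P,Q]=0$ immediately. The same lemma also gives \eqref{pro}, namely $Q|T|=|T|Q$, which I will use repeatedly.

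\emph{Step 2: $V$ is a partial isometry.} We have $V^\ast V=U^\ast PU$. Using $QU=U$, the commutation $PQ=QP$ and $P^2=P$, I compute
\[
(U^\ast PU)^2=U^\ast PQPU=U^\ast PQU=U^\ast PU .
\]
So $V^\ast V$ is a self-adjoint idempotent, hence an orthogonal projection, and $V$ is a partial isometry. This is the only place where Step 1 is genuinely needed.

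\emph{Step 3: $T^D=V|T^D|$ with $|T^D|=U^\ast|T|U$.} First, $(T^D)^\ast T^D=U^\ast|T|^2U$. On the other hand,
\[
(U^\ast|T|U)^2=U^\ast|T|Q|T|U=U^\ast|T|^2QU=U^\ast|T|^2U,
\]
using \eqref{pro} and $QU=U$. By uniqueness of the positive square root, $|T^D|=U^\ast|T|U$. Next,
\[
V|T^D|=PUU^\ast|T|U=PQ|T|U=P|T|QU=P|T|U=|T|U=T^D,
\]
where I use \eqref{pro} and $P|T|=|T|$ from \eqref{alucom1}.

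\emph{Step 4: kernel condition $\mathcal{N}(V)=\mathcal{N}(|T^D|)$.} This completes the polar decomposition via the uniqueness theorem in \cite[Chapter VIII, \S 3]{Conway-book-1990}. Note that $\mathcal{N}(V)=\mathcal{N}(V^\ast V)=\mathcal{N}(U^\ast PU)$, and $\langle U^\ast PUx,x\rangle=\|PUx\|^2$, so $x\in\mathcal{N}(V)$ if and only if $PUx=0$.
\begin{itemize}
\item[(a)] If $U^\ast|T|Ux=0$, then $\||T|^{1/2}Ux\|^2=0$. Hence $Ux\in\mathcal{N}(|T|)=\mathcal{R}(P)^\perp$, so $PUx=0$.
\item[(b)] Conversely, if $PUx=0$, then $|T|Ux=|T|PUx=0$, and therefore $U^\ast|T|Ux=0$.
\end{itemize}
Thus the two kernels coincide, and $T^D=U^\ast UU\,|T^D|$ is the polar decomposition.

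The main obstacle is Step 2, showing that $U^\ast UU$ is actually a partial isometry. Everything hinges on the commutation of the range projections $P$ and $Q$, which is exactly what binormality supplies through Lemma \ref{commu8}. The remaining steps are routine manipulations with \eqref{pro} and \eqref{alucom1}.
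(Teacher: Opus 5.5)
Your proof is correct. The paper itself gives no proof of this statement: it is quoted from \cite{Mat-Ope-2009} and used as a black box in the later theorem on binormality of $T^D$. So your argument is a genuinely independent, self-contained route. It runs as follows:
\begin{itemize}
\item Lemma \ref{commu8} gives $[UU^\ast,U^\ast U]=0$ and \eqref{pro}.
\item The commutation makes $(U^\ast UU)^\ast(U^\ast UU)=U^\ast(U^\ast U)U$ idempotent, so $U^\ast UU$ is a partial isometry.
\item Identity \eqref{pro} yields both $(U^\ast|T|U)^2=U^\ast|T|^2U$ and $U^\ast UU\cdot U^\ast|T|U=T^D$.
\item The kernel comparison then concludes via the uniqueness part of the polar decomposition theorem.
\end{itemize}

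Your computation $|T^D|=U^\ast|T|U$ is the same one the paper performs later in \eqref{squareroot3}. Strictly speaking it is redundant here: once $U^\ast UU$ is a partial isometry, $U^\ast|T|U\ge 0$, $T^D=U^\ast UU\cdot U^\ast|T|U$ and the kernels agree, the uniqueness theorem identifies the positive factor as $|T^D|$ automatically.

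What your approach buys over the citation, besides self-containedness, is a sharper view of the hypothesis. Every step uses binormality only through $UU^\ast|T|=|T|UU^\ast$. That identity alone already forces $[UU^\ast,U^\ast U]=0$: it makes $\mathcal{N}(|T|)$ invariant under the projection $UU^\ast$, so $UU^\ast$ commutes with $I-U^\ast U$. Hence your argument proves the conclusion for every $T$ satisfying just the first condition in Theorem \ref{commu3}(ii). This includes, for instance, every invertible $T$, binormal or not.
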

	
	\begin{theorem}
		Let $T\in\mathbb{B}(\mathcal{H})$ be binormal with the polar decomposition $T=U|T|$. Then the Duggal transform $P_{1}(T)=T^D$ is binormal if and only if $[U^\ast|T|U,UU^\ast]=0$.
	\end{theorem}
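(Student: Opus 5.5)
The plan is to compute $|T^D|$ and $|(T^D)^\ast|$ explicitly in terms of $|T|$, $U$ and the two projections $P=U^\ast U$ and $Q=UU^\ast$. Binormality of $T^D$, namely $[|T^D|,|(T^D)^\ast|]=0$ after Lemma \ref{commu}, should then reduce to the stated commutator condition. Write $A=U^\ast|T|U$.

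\emph{Step 1: compute $|T^D|$ and $|(T^D)^\ast|$.} Since $T$ is binormal, Theorem \ref{commu3} (ii) gives $Q|T|=|T|Q$. Then
\[
A^2=U^\ast|T|QU^{\phantom{\ast}}\!|T|U=U^\ast Q|T|^2U=U^\ast|T|^2U=(T^D)^\ast T^D .
\]
Hence $|T^D|=A$. By Theorem \ref{matoper}, $T^D=V|T^D|$ is the polar decomposition with $V=U^\ast UU=PU$, and $[P,Q]=0$. Therefore
\[
|(T^D)^\ast|=VAV^\ast=PQ|T|QP .
\]
Using $Q|T|=|T|Q$, the relations $|T|P=P|T|=|T|$ from \eqref{alucom1}, and $PQ=QP$, this should simplify to $|(T^D)^\ast|=|T|Q$. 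I expect this projection bookkeeping to be the only delicate algebra. It is routine once Theorem \ref{matoper} supplies $[P,Q]=0$.

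\emph{Step 2: reduce binormality of $T^D$.} By Lemma \ref{commu}, $T^D$ is binormal iff $[A,|T|Q]=0$. From binormality of $T$, \eqref{commu1} together with Lemma \ref{commu} gives $[A,|T|]=0$. Hence
\[
[A,|T|Q]=|T|\,[A,Q].
\]
So sufficiency is immediate: if $[A,Q]=0$, then $T^D$ is binormal.

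\emph{Step 3: necessity, which is the main point.} Suppose $|T|[A,Q]=0$. Then the range of $[A,Q]$ lies in $\mathcal{N}(|T|)=\mathcal{N}(P)$. On the other hand, $PA=A$ because $\mathcal{R}(U^\ast)=\mathcal{R}(P)$, and $P$ commutes with $Q$. This gives
\[
P[A,Q]=PAQ-PQA=AQ-QPA=AQ-QA .
\]
So the range of $[A,Q]$ also lies in $\mathcal{R}(P)$. Since $\mathcal{R}(P)\cap\mathcal{N}(P)=\{0\}$, we get $[A,Q]=0$, i.e.\ $[U^\ast|T|U,UU^\ast]=0$. This completes the equivalence.
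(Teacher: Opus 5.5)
Your proof is correct and follows essentially the same route as the paper. Both arguments use the same formulas $|T^D|=U^\ast|T|U$ and $|(T^D)^\ast|=|T|UU^\ast$, the same commutation $[U^\ast|T|U,|T|]=0$, and $[U^\ast U,UU^\ast]=0$ from Theorem \ref{matoper}; your range argument for necessity (from $|T|[A,Q]=0$) is the adjoint form of the paper's check that $[U^\ast|T|U,UU^\ast]$ vanishes separately on $\overline{\mathcal{R}(|T|)}$ and on $\mathcal{N}(|T|)$. One typo: the display for $A^2$ has a stray $U$ and should read $U^\ast|T|Q|T|U$.
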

	\begin{proof}
		From Theorem \ref{commu3} (ii) we know that
		\begin{equation}\label{squareroot4}
			U^\ast|T|^\frac{1}{2}U|T|^\frac{1}{2}=|T|^\frac{1}{2}U^\ast|T|^\frac{1}{2}U
		\end{equation}
		and
		\begin{equation}\label{commu2}
			|T|UU^\ast=UU^\ast|T|.
		\end{equation}
		The above equation together with the fact $U^\ast UU^\ast=U^{\ast}$ and the uniqueness of the positive square root implies
		\begin{equation}\label{squareroot3}
			|T^D|=((T^D)^\ast T^D)^\frac{1}{2}=(U^\ast|T||T|U)^\frac{1}{2}=U^\ast|T|U.
		\end{equation}
		Equations \eqref{squareroot4} and \eqref{squareroot2} gives \begin{equation}\label{commu6}
			U^\ast|T|U|T|=|T|U^\ast|T|U.
		\end{equation}
		On the other hand,	from Theorem \ref{matoper} we know that $T^D=U^\ast UU|T^D|$ is the polar decomposition of $T^D$. Then from \eqref{squareroot3} and Theorem \ref{matoper} we obtain
		\begin{align*}
			|(T^D)^\ast|=(U^\ast UU)|T^D|(U^\ast UU)^\ast&=U^\ast UUU^\ast|T|UU^\ast U^\ast U=U^\ast U|T|UU^\ast=|T|UU^\ast=UU^\ast|T|.
		\end{align*}
		Therefore, we conclude that	$T^D$ is binormal if and only if 
		\begin{align*}
			&|T^D||(T^D)^\ast|=|(T^D)^\ast||T^D|\\&\Leftrightarrow U^\ast|T|UUU^\ast|T|=UU^\ast(|T|U^\ast|T|U)\\&\Leftrightarrow U^\ast|T|UUU^\ast|T|=UU^\ast(U^\ast|T|U|T|)\quad\quad\text{by \eqref{commu6}}.
		\end{align*}
		Thus, it is enough to show that $U^\ast|T|UUU^\ast|T|=UU^\ast U^\ast|T|U|T|\Leftrightarrow[U^\ast|T|U,UU^\ast]=0$. Indeed, suppose $U^\ast|T|UUU^\ast|T|=UU^\ast U^\ast|T|U|T|$ holds, then we have $U^\ast|T|UUU^\ast=UU^\ast U^\ast|T|U$ on $\overline{\mathcal{R}(|T|)}$. Since $P_{\mathcal{N}(|T|)}=1-U^\ast U$, by Theorem \ref{matoper} we obtain
		\begin{align*}
			U^\ast|T|UUU^\ast P_{\mathcal{N}(|T|)}=U^\ast|T|UUU^\ast(1-U^\ast U)&=U^\ast|T|UUU^\ast-U^\ast|T|U(UU^\ast U^\ast U)\\&=U^\ast|T|UUU^\ast-U^\ast|T|U(U^\ast UUU^\ast)\\&=U^\ast|T|UUU^\ast-U^\ast|T|UUU^\ast\\&=0\\&=UU^\ast U^\ast|T|UP_{\mathcal{N}(|T|)}\quad\quad \text{since $\mathcal{N}(U)=\mathcal{N}(|T|)$}.
		\end{align*}
		So we deduce $U^\ast|T|UUU^\ast=UU^\ast U^\ast|T|U$ on $\mathcal{H}=\overline{\mathcal{R}(|T|)}\oplus\mathcal{N}(|T|)$, i.e., $[U^\ast|T|U,UU^\ast]=0$. Conversely, equation $[U^\ast|T|U,UU^\ast]=0$ immediately implies $U^\ast|T|UUU^\ast|T|=UU^\ast U^\ast|T|U|T|$.
		This completes the proof.
	\end{proof}
	\begin{remark}
		Even if $T=U|T|$ is binormal, $[U^\ast|T|U,UU^\ast]=0$ (note that $UU^\ast=P_{\overline{\mathcal{R}(|T^\ast|)}}$) in the above theorem and $[U^\ast|T|U,|T^\ast|]=0$ in Corollary \ref{commu7} are not equivalent. This can be seen by considering the invertible operator $T$ in Example \ref{exmaple1}. Clearly, $[U^\ast|T|U,UU^\ast]=0$ holds. Yet, the last computation in Example \ref{exmaple1} implies that $[U^\ast|T|U,|T^\ast|]\ne0$.
	\end{remark}
	
	Let $\lambda\in(0,1]$ and $T\in\mathbb{B}(\mathcal{H})$. Define the $n$th iteration of the power mean transform of $T$ recursively by$$P_{\lambda}^{(n)}(T)=P_{\lambda}\Big(P_{\lambda}^{(n-1)}\Big)(T),\qquad n\in\mathbb{N}$$ with $P_{\lambda}^{(0)}(T)=T$. At the end of this section, we use the previous results to study the limit of iteration of the power mean transform of centered operators. We need the following properties of centered operators

	\begin{lemma}\label{centerpro}
		Let $T\in\mathbb{B}(\mathcal{H})$ be centered with the polar decomposition $T=U|T|$. Then: \begin{itemize}
			\item[(i)] $[(U^\ast)^i|T|^\frac{1}{2}U^i,U|T|U^\ast]=0$ for $i=0,1,2,\cdots$.
			\item[(ii)] $[(U^\ast)^i|T|^\frac{1}{2}U^i,UU^\ast]=0$ for $i=0,1,2,\cdots$.
			\item[(iii)] 
			$(U^\ast)^i|T|^\frac{1}{2}U^i(U^\ast)^j=(U^\ast)^i|T|^\frac{1}{2} U^{i-j}$ for non‑negative integers $j\le i$.
			\item[(iv)] $(U^\ast)^i|T|^\frac{1}{2}U^i=((U^\ast)^i|T|U^i)^\frac{1}{2}$ for $i=0,1,2,\cdots$.
			\item[(v)] $[(U^\ast)^i|T|^\frac{1}{2}U^i,(U^\ast)^j|T|^\frac{1}{2}U^j]=0$ for $i,j=0,1,2,\cdots$.
		\end{itemize}
	\end{lemma}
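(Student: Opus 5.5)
The natural starting point is the characterization of Ito et al.\ recalled in the Introduction: $T$ is centered if and only if $[U^n|T|(U^n)^\ast,|T|]=0$ for $n=1,2,\cdots$. Conjugating this by $(U^\ast)^k$ will give commutation between the "shifted moduli" $(U^\ast)^i|T|U^i$. A second basic fact I would use is that, for centered $T$, each power $U^n$ is a partial isometry and $T^n=U^n|T^n|$ is the polar decomposition. This is part of Ito et al.'s result underlying \cite[Theorem 4.1]{Ito-JOT-2004}. In particular, $(U^\ast)^nU^n$ and $U^n(U^\ast)^n$ are projections, namely $P_{\mathcal{N}(T^n)^\perp}$ and $P_{\overline{\mathcal{R}(T^n)}}$.

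\textbf{Step 1: (ii), (iii) and (iv).} Since a centered operator is binormal, Theorem \ref{commu3} and its proof give $[UU^\ast,|T|]=0$. Using the centered condition with Lemma \ref{commu8}, I would also get $[U^n(U^\ast)^n,|T|]=0$ for all $n$, since $U^n(U^\ast)^n$ is the range projection of $U^n|T|(U^\ast)^n$. For (iii), when $j\le i$ we have $U^i(U^\ast)^j=U^{i-j}U^j(U^\ast)^j$. Moving the projection $U^j(U^\ast)^j$ past the relevant factors, and using $(U^\ast)^{i-j}U^{i-j}\cdot$(range of $U^j$-type projections), reduces the left side to $(U^\ast)^i|T|^{1/2}U^{i-j}$. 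Item (iv) then follows as in \eqref{squareroot2}. The square of $(U^\ast)^i|T|^{1/2}U^i$ is $(U^\ast)^i|T|^{1/2}U^i(U^\ast)^i|T|^{1/2}U^i$. By $[U^i(U^\ast)^i,|T|^{1/2}]=0$ (Lemma \ref{commu}) this becomes $(U^\ast)^iU^i(U^\ast)^i|T|U^i=(U^\ast)^i|T|U^i$. Uniqueness of the positive square root then gives (iv). For (ii), the case $i=0$ is $[|T|^{1/2},UU^\ast]=0$. For $i\ge1$, I would write $UU^\ast(U^\ast)^i|T|^{1/2}U^i$ and use that $UU^\ast$ commutes with the $(U^\ast)^{i}|T|U^i$. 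That commutation comes from Lemma \ref{commu8} applied to the commuting positive pair $(U^\ast)^{i}|T|U^i$ and $U|T|U^\ast$, whose range projection is $UU^\ast$; this pair is handled in Step 2.

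\textbf{Step 2: (v) and (i).} Conjugating $[U^n|T|(U^\ast)^n,|T|]=0$ by $(U^\ast)^m\cdot U^m$ and simplifying with the projection identities gives $[(U^\ast)^{i}|T|U^{i},(U^\ast)^j|T|U^j]=0$ for $i,j\ge0$. Applying Lemma \ref{commu} twice together with (iv) yields (v). Item (i) follows similarly from $[U^{n}|T|(U^\ast)^{n},|T|]=0$ with the roles arranged so that $U|T|U^\ast$ appears. Concretely, $U^\ast\cdot$(relation with $n=i+1$)$\cdot U$ gives $[(U^\ast)^i|T|U^i,U^\ast|T|U]$-type identities, and the $n=1,\ldots$ relations give direct commutation with $U|T|U^\ast$. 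One then passes to square roots via (iv) and Lemma \ref{commu}.

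\textbf{Main obstacle.} I expect the bookkeeping in Step 2 to be the difficult part. Conjugation by the non-unitary $U^m$ introduces the projections $(U^\ast)^mU^m$ and $U^m(U^\ast)^m$, and each must be shown to commute with, or be absorbed by, the adjacent factors. I would first establish a small toolkit: $U^n$ is a partial isometry; $(U^\ast)^nU^n$ commutes with every $(U^\ast)^i|T|U^i$; and $U^n(U^\ast)^n$ commutes with $|T|$. The last of these is obtained from Lemma \ref{commu8}, since $U^n(U^\ast)^n$ is the range projection of $U^n|T|(U^\ast)^n$, which commutes with $|T|$. With these in hand, each item becomes a short algebraic manipulation.
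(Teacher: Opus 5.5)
Your route is more self-contained than the paper's. The paper imports $[U^i|T|(U^\ast)^i,|T|]=0$ and $[(U^\ast)^i|T|U^i,(U^\ast)^j|T|U^j]=0$ from Theorem 4.1 of \cite{Ito-JOT-2004}, and items (i), (iii) from Lemma 3.7 there. It then derives (ii) from (i) by Lemma \ref{commu8}, (iv) from (iii), and (v) from (iv).

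Your proof of (iv) from $[U^i(U^\ast)^i,|T|]=0$ is fine and does not use (iii), and your (ii) is the paper's argument. The gap is item (i), the one genuinely mixed relation, pairing a backward-shifted modulus with the forward-shifted $U|T|U^\ast$. Your (ii) rests on it, and so does your (iii): your sketch of (iii) must move $U^j(U^\ast)^j$ past $(U^\ast)^{k}|T|^{1/2}U^{k}$ with $k=i-j\ge1$. That is a (ii)-type commutation, and Step 1 does not supply it, since Step 1 only gives commutation with $|T|$.

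The computation you describe for (i) does not produce (i). Write $A_k=(U^\ast)^k|T|U^k$ and $B_m=U^m|T|(U^\ast)^m$. Conjugating $[B_{i+1},|T|]=0$ once by $U^\ast\cdot U$ gives $A_1B_iU^\ast U=U^\ast UB_iA_1$. Absorbing $U^\ast U$ via Lemma \ref{commu8}, this is $[A_1,B_i]=0$: the shift indices land on the wrong sides. The relation with $n=1$ only covers the case $i=0$.

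The missing idea is to iterate that conjugation. Take $k\ge0$ and $m\ge1$, and use:
\begin{itemize}
\item $UU^\ast U=U$ and $U^\ast UU^\ast=U^\ast$;
\item $A_{k+1}U^\ast U=A_{k+1}=U^\ast UA_{k+1}$;
\item $[U^\ast U,B_{m-1}]=0$, which is Lemma \ref{commu8}, since $U^\ast U=P_{\mathcal{N}(|T|)^\perp}$ and $[B_{m-1},|T|]=0$.
\end{itemize}
With these one checks that $U^\ast[A_k,B_m]U=[A_{k+1},B_{m-1}]$. Applying this $i$ times to $[A_0,B_{i+1}]=0$ gives $[A_i,B_1]=0$. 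Then:
\begin{itemize}
\item (i) follows from (iv) and Lemma \ref{commu};
\item (ii) follows from Lemma \ref{commu8};
\item (iii) follows by induction on $j\ge1$. If it holds for $j-1$, then $(U^\ast)^i|T|^{1/2}U^i(U^\ast)^j=(U^\ast)^i|T|^{1/2}U^{i-j}UU^\ast$. By (ii) this equals $(U^\ast)^jUU^\ast(U^\ast)^{i-j}|T|^{1/2}U^{i-j}=(U^\ast)^i|T|^{1/2}U^{i-j}$.
\end{itemize}
The same iteration applied to $[A_0,B_n]=0$ gives $[A_n,|T|]=0$ directly. That removes the need for the unproved toolkit item $[(U^\ast)^nU^n,(U^\ast)^i|T|U^i]=0$ when you derive the backward commuting family for (v). Alternatively, cite Lemma 3.7 of \cite{Ito-JOT-2004}, as the paper does.
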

	\begin{proof}
	Since $T$ is centered, it follows from (v) and (iv) of Theorem 4.1 in \cite{Ito-JOT-2004} (see also Theorem 4.6 in \cite{Xu-AOT-2019}) that
	\begin{equation}\label{commu11}
	[U^i|T|(U^i)^\ast,|T|]
	\end{equation}
and 
\begin{equation}\label{commu12}
[(U^\ast)^i|T|U^i,(U^\ast)^j|T|U^j]=0
\end{equation}
for $i,j=0,1,2,\cdots$.	

Using \eqref{commu11} together with (v) and (vi) of Lemma 3.7 in \cite{Ito-JOT-2004}, we deduce that (i) and (iii) are valid.

Using the equation $UU^\ast=P_{\overline{\mathcal{R}(|T^\ast|)}}=P_{\mathcal{N}(|T^\ast|)^\perp}$ and (i), noting that $U|T|U^\ast=|T^\ast|$, together with Lemma \ref{commu8}, we obtain (ii).

A direct computation gives\begin{align*}
(U^\ast)^i|T|^\frac{1}{2}U^i(U^\ast)^i|T|^\frac{1}{2}U^i\xlongequal{\text{(iii)}}(U^\ast)^i|T|^\frac{1}{2}U^{i-i}|T|^\frac{1}{2}U^i=(U^\ast)^i|T|U^i
\end{align*}
for $i=0,1,2,\cdots$. From the above equation and the uniqueness of the positive square root, we obtain (iv). 

Combining (iv) and \eqref{commu12} with Lemma \ref{commu} yields (v).

The proof is complete.
\end{proof}
The following result gives an explicit form of the polar decomposition of the $n$th iteration of the power mean transform of a central operator.
\begin{proposition}\label{bin}
 Let $T\in\mathbb{B}(\mathcal{H})$ be centered with the polar decomposition $T=U|T|$ and $\lambda\in(0,1)$. Then $P_{\lambda}^{(k)}(T)$ is binormal and $P_{\lambda}^{(k)}(T)=U|P_{\lambda}^{(k)}(T)|$ is the polar decomposition of $P_{\lambda}^{(k)}(T)$ for $k=1,2,\cdots,n$, where $|P_{\lambda}^{(k)}(T)|=\Big(\sum\limits_{i=0}^k\binom{k}{i}\lambda^i(1-\lambda)^{k-i}(U^\ast)^i|T|^\frac{1}{2}U^i\Big)^2$.
\end{proposition}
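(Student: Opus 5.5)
Write $A_i=(U^\ast)^i|T|^{\frac12}U^i$ for $i\ge0$. The statement is proved by induction on $k$. The engine is Theorem \ref{commu3}, (i)$\Rightarrow$(iii): once an operator $S=U|S|$ is binormal, $P_{\lambda}(S)=U\,((1-\lambda)|S|^{\frac12}+\lambda U^\ast|S|^{\frac12}U)^2$ is its polar decomposition, with the same partial isometry $U$. Lemma \ref{centerpro} supplies all the commutation relations among the $A_i$, $UU^\ast$ and $U|T|U^\ast$ that are needed.

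\textbf{Base case $k=1$.} Since $T$ is centered, it is binormal. Theorem \ref{commu3}(iii) then gives $P_{\lambda}(T)=U|P_{\lambda}(T)|$ with $|P_{\lambda}(T)|=((1-\lambda)A_0+\lambda A_1)^2$. Binormality of $P_{\lambda}(T)$ does not follow from this alone, so it is folded into the inductive step below.

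\textbf{Inductive step.} Assume $S_k:=P_{\lambda}^{(k)}(T)=U|S_k|$ is a polar decomposition with $|S_k|^{\frac12}=B_k:=\sum_{i=0}^k\binom{k}{i}\lambda^i(1-\lambda)^{k-i}A_i$; note $B_k\ge0$ as a nonnegative combination of positive operators. I will verify condition (ii) of Theorem \ref{commu3} for $S_k$, which gives binormality. That condition has two parts.

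\emph{First part: $UU^\ast|S_k|=|S_k|UU^\ast$.} This follows from Lemma \ref{centerpro}(ii), since every $A_i$ commutes with $UU^\ast$, hence so does $B_k^2$.

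\emph{Second part: $U^\ast\Delta(S_k)$ is self-adjoint, i.e. $[U^\ast B_kU,B_k]=0$.*} First compute $U^\ast B_kU$, using $U^\ast A_iU=(U^\ast)^{i+1}|T|^{\frac12}U^{i+1}=A_{i+1}$. This gives $U^\ast B_kU=\sum_i\binom{k}{i}\lambda^i(1-\lambda)^{k-i}A_{i+1}$, and Lemma \ref{centerpro}(v) says all $A_i$ commute pairwise. So the commutator vanishes, and $S_k$ is binormal.

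Theorem \ref{commu3}(iii) then yields $S_{k+1}=P_{\lambda}(S_k)=U\bigl((1-\lambda)B_k+\lambda U^\ast B_kU\bigr)^2$. Finally,
\[
(1-\lambda)B_k+\lambda U^\ast B_kU=\sum_{i=0}^{k+1}\Bigl[(1-\lambda)\tbinom{k}{i}+\lambda\tbinom{k}{i-1}\Bigr]\lambda^{i}(1-\lambda)^{k-i}\cdots,
\]
which collapses to $B_{k+1}$ by Pascal's rule. This closes the induction, and at the last stage the binormality of $S_n$ is obtained by the same check.

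\textbf{Main obstacle.} The delicate point is that Theorem \ref{commu3}(iii) is stated with the operator's own polar factor. Its hypothesis (ii) must therefore be read with $U$ as the partial isometry of $S_k$, and with $|S_k|^{\frac12}=B_k$ rather than $|T|^{\frac12}$. One must check that $\mathcal N(B_k)=\mathcal N(U)$, so that $U$ really is the polar factor. This follows since $B_k\ge(1-\lambda)^kA_0$ and each $A_i$ with $i\ge1$ vanishes on $\mathcal N(U)$.

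One must also justify the identity $U^\ast A_iU=A_{i+1}$ used above. It holds literally by definition, but the computation $U^\ast B_k^2U=(U^\ast B_kU)^2$ is needed implicitly when identifying $U^\ast|S_k|U$. For that I expect to use Lemma \ref{centerpro}(ii),(iii) together with $UU^\ast U=U$, in the same way \eqref{squareroot2} was derived. This bookkeeping, rather than the binomial algebra, is where care is required.
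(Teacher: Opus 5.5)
Your proof is correct and follows the paper's argument essentially step for step: induction, Lemma \ref{centerpro}(ii) for $[UU^\ast,|S_k|]=0$, Lemma \ref{centerpro}(v) for the self-adjointness of $U^\ast\Delta(S_k)=U^\ast B_kUB_k$, then Theorem \ref{commu3}(ii)$\Rightarrow$(iii) and Pascal's rule, with the binormality of each iterate proved at the next step exactly as the paper does. The two ``obstacles'' you flag are not actually needed: the kernel condition is already contained in the conclusion of Theorem \ref{commu3}(iii), and neither condition (ii) nor the formula in (iii) involves $U^\ast|S_k|U$. Also, the bracketed coefficient in your display should read $\bigl[\binom{k}{i}+\binom{k}{i-1}\bigr]\lambda^i(1-\lambda)^{k+1-i}$.
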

\begin{proof}
	 We prove this by induction on $n$. Case $n=1$. Since $T$ is centered, it is also binormal. Then, by Theorem	\ref{commu3} (iv), we obtain the desired result. 
	
	Now assume that the above proposition is true for some positive integer $n$; we shall prove that it is true for $n+1$ as well. Indeed, by the induction hypothesis, i.e., $P_{\lambda}^{(n)}(T)=U|P_{\lambda}^{(n)}(T)|$ is the polar decomposition, by Lemma \ref{centerpro} (ii) we have 
	\begin{align*}
	UU^\ast|P_{\lambda}^{(n)}(T)|^\frac{1}{2}&=\sum\limits_{i=0}^n\binom{n}{i}\lambda^i(1-\lambda)^{n-i}\Big(UU^\ast(U^\ast)^i|T|^\frac{1}{2}U^i\Big)\\&=\sum\limits_{i=0}^n\binom{n}{i}\lambda^i(1-\lambda)^{n-i}\Big((U^\ast)^i|T|^\frac{1}{2}U^iUU^\ast\Big)\\&=|P_{\lambda}^{(n)}(T)|^\frac{1}{2}UU^\ast,
	\end{align*}
	which, by Lemma \ref{commu}, implies
	\begin{equation}\label{itera}
		UU^\ast|P_{\lambda}^{(n)}(T)|=|P_{\lambda}^{(n)}(T)|UU^\ast.
	\end{equation}
	By Lemma \ref{centerpro} (v), we obtain
	\begin{align*}
		&U^\ast\Delta\Big(P_{\lambda}^{(n)}(T)\Big)=U^\ast|P_{\lambda}^{(n)}(T)|^\frac{1}{2}U|P_{\lambda}^{(n)}(T)|^\frac{1}{2}\\&=\Bigg(\sum\limits_{i=0}^n\binom{n}{i}\lambda^i(1-\lambda)^{n-i}(U^\ast)^{i+1}|T|^\frac{1}{2}U^{i+1}\Bigg)\Bigg(\sum\limits_{j=0}^n\binom{n}{j}\lambda^j(1-\lambda)^{n-j}(U^\ast)^{j}|T|^\frac{1}{2}U^{j}\Bigg)\\&=\sum\limits_{0\le i,j\le n}\binom{n}{i}\binom{n}{j}\lambda^{i+j}(1-\lambda)^{2n-i-j}\Big((U^\ast)^{i+1}|T|^\frac{1}{2}U^{i+1}(U^\ast)^{j}|T|^\frac{1}{2}U^{j}\Big)\\&=\sum\limits_{0\le i,j\le n}\binom{n}{i}\binom{n}{j}\lambda^{i+j}(1-\lambda)^{2n-i-j}\Big((U^\ast)^{j}|T|^\frac{1}{2}U^{j}(U^\ast)^{i+1}|T|^\frac{1}{2}U^{i+1}\Big)\\&=\Bigg(\sum\limits_{j=0}^n\binom{n}{j}\lambda^j(1-\lambda)^{n-j}(U^\ast)^{j}|T|^\frac{1}{2}U^{j}\Bigg)\Bigg(\sum\limits_{i=0}^n\binom{n}{i}\lambda^j(1-\lambda)^{n-i}(U^\ast)^{i+1}|T|^\frac{1}{2}U^{i+1}\Bigg)\\&=|P_{\lambda}^{(n)}(T)|^\frac{1}{2}U^\ast|P_{\lambda}^{(n)}(T)|^\frac{1}{2}U=\Bigg(U^\ast\Delta\Big(P_{\lambda}^{(n)}(T)\Big)\Bigg)^\ast.
	\end{align*}
	Combining the above equation and \eqref{itera} with Theorem \ref{commu3} (ii), we deduce that $P_{\lambda}^{(n)}(T)$ is binormal. Therefore, it follows from Theorem \ref{commu3} (iii) that $P_{\lambda}^{(n+1)}(T)=U|P_{\lambda}^{(n+1)}(T)|$ is the polar decomposition of $P_{\lambda}^{(n+1)}(T)$, where
	\begin{equation}\label{itera1}
		|P_{\lambda}^{(n+1)}(T)|=\Big((1-\lambda)|P_{\lambda}^{(n)}(T)|^\frac{1}{2}+\lambda U^\ast|P_{\lambda}^{(n)}(T)|^\frac{1}{2} U\Big)^2.
	\end{equation} To finish the proof, it remain to prove $|P_{\lambda}^{(n+1)}(T)|^\frac{1}{2}=\sum\limits_{i=0}^{n+1}\binom{n+1}{i}\lambda^i(1-\lambda)^{n+1-i}(U^\ast)^i|T|^\frac{1}{2}U^i$. From \eqref{itera1} and the induction hypothesis, we see that\begin{align*}
		&|P_{\lambda}^{(n+1)}(T)|^\frac{1}{2}=(1-\lambda)|P_{\lambda}^{(n)}(T)|^\frac{1}{2}+\lambda U^\ast|P_{\lambda}^{(n)}(T)|^\frac{1}{2} U\\&=(1-\lambda)\sum\limits_{i=0}^n\binom{n}{i}\lambda^i(1-\lambda)^{n-i}(U^\ast)^i|T|^\frac{1}{2}U^i+\lambda\sum\limits_{i=0}^n\binom{n}{i}\lambda^i(1-\lambda)^{n-i}(U^\ast)^{i+1}|T|^\frac{1}{2}U^{i+1}\\&=\sum\limits_{i=0}^n\binom{n}{i}\lambda^i(1-\lambda)^{n+1-i}(U^\ast)^i|T|^\frac{1}{2}U^i+\sum\limits_{i=0}^n\binom{n}{i}\lambda^{i+1}(1-\lambda)^{n-i}(U^\ast)^{i+1}|T|^\frac{1}{2}U^{i+1}\\&=\sum\limits_{i=0}^n\binom{n}{i}\lambda^i(1-\lambda)^{n+1-i}(U^\ast)^i|T|^\frac{1}{2}U^i+\sum\limits_{j=1}^{n+1}\binom{n}{j-1}\lambda^{j}(1-\lambda)^{n+1-j}(U^\ast)^{j}|T|^\frac{1}{2}U^{j}\\&=(1-\lambda)^{n+1}|T|^\frac{1}{2}+\lambda^{n+1}(U^\ast)^{n+1}|T|^\frac{1}{2}U^{n+1}+\sum\limits_{i=1}^{n}\Bigg(\binom{n}{i}+\binom{n}{i-1}\Bigg)\lambda^{i}(1-\lambda)^{n+1-i}(U^\ast)^{i}|T|^\frac{1}{2}U^{i}\\&=(1-\lambda)^{n+1}|T|^\frac{1}{2}+\lambda^{n+1}(U^\ast)^{n+1}|T|^\frac{1}{2}U^{n+1}+\sum\limits_{i=1}^{n}\binom{n+1}{i}\lambda^{i}(1-\lambda)^{n+1-i}(U^\ast)^{i}|T|^\frac{1}{2}U^{i}\\&=\sum\limits_{i=0}^{n+1}\binom{n+1}{i}\lambda^{i}(1-\lambda)^{n+1-i}(U^\ast)^{i}|T|^\frac{1}{2}U^{i}.
	\end{align*}
	The proof is complete.		
\end{proof}

Now, we present the main result, which was proved by Osaka and Yamazaki for invertible $T$ using the operator mean \cite[Theorem 3.3]{yamazaki-tams-2025}; under our definition, however, the invertibility of $T$ is not required.
\begin{theorem}
Let $T\in\mathbb{B}(\mathcal{H})$ be centered with the polar decomposition $T=U|T|$ and $\lambda\in(0,1)$. If $T$ is semi-hyponormal, then $\lim\limits_{n\to\infty}P_{\lambda}^{(n)}(T)=UL$ in the strong operator topology. Moreover, $UL$ is quasinormal and $\|UL\|=\|P_{\lambda}^{(n)}(T)\|$ for $n\in\mathbb{N}$.
\end{theorem}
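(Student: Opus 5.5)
By Proposition \ref{bin}, for every $n\ge1$ we have $P_{\lambda}^{(n)}(T)=UL_n^2$ with
\[
L_n:=\sum_{i=0}^n\binom{n}{i}\lambda^i(1-\lambda)^{n-i}B_i,\qquad B_i:=(U^\ast)^i|T|^{\frac12}U^i .
\]
This is the polar decomposition. So it suffices to show that $L_n^2$ converges strongly to some positive operator $L$. Then $P_{\lambda}^{(n)}(T)=UL_n^2\to UL$ strongly.

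\textbf{Monotonicity of the $B_i$.} Semi-hyponormality gives $|T|\ge|T^\ast|=U|T|U^\ast$. Conjugating by $U^\ast,U$ and using \eqref{alucom1} yields $U^\ast|T|U\ge|T|$. Lemma \ref{centerpro}(iv) with $i=1$ gives $B_1^2=U^\ast|T|U$. Since $B_1$ and $B_0$ commute (Lemma \ref{centerpro}(v)), the Loewner--Heinz inequality gives $B_1\ge B_0$. Conjugating $B_1\ge B_0$ repeatedly by $(U^\ast)^i,U^i$ gives $B_{i+1}\ge B_i$ for all $i$. Moreover $\|B_i\|\le\|T\|^{1/2}$. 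Hence $(B_i)$ is an increasing, norm-bounded, commuting family of positive operators, so $B_i\to B$ strongly for some positive $B$ commuting with every $B_i$.

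\textbf{Convergence of $L_n$ and $L_n^2$.} $L_n$ is the binomial (Bernstein-type) average of $(B_i)$. For any $x$,
\[
\|(L_n-B)x\|\le\sum_i\binom{n}{i}\lambda^i(1-\lambda)^{n-i}\|(B_i-B)x\| .
\]
Since $\|(B_i-B)x\|\to0$ and is bounded, and the binomial distribution concentrates at $i\approx n\lambda\to\infty$, this tends to $0$. Thus $L_n\to B$ strongly. Products of uniformly bounded strongly convergent sequences converge strongly, so $L_n^2\to B^2=:L$, and therefore $P_{\lambda}^{(n)}(T)\to UL$ strongly.

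\textbf{Quasinormality of $UL$.} I need $UL=LU$ on the relevant part of the space. From $B_{i+1}=U^\ast B_iU$, passing to the limit gives $U^\ast BU=B$. Lemma \ref{centerpro}(ii) gives $[B_i,UU^\ast]=0$, hence $[B,UU^\ast]=0$. Then
\[
UB=UU^\ast BU=BUU^\ast U=BU,
\]
so $U$ commutes with $B$ and hence with $L=B^2$. Thus $[UL,(UL)^\ast UL]=[UL,LU^\ast UL]=0$, because $U^\ast U$ commutes with $L$: indeed $U^\ast UB=U^\ast BU=B$, and taking adjoints gives $BU^\ast U=B$. So $UL$ is quasinormal. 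This step, justifying that the strong limits inherit the commutation relations and that $U^\ast U$ acts as the identity on the range of $B$, is the main technical point, but everything passes to strong limits because multiplication by a fixed operator is strongly continuous.

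\textbf{Norm equality.} Since $L_n^2\ge0$ and $U$ is isometric on $\overline{\mathcal{R}(L_n)}$ (the kernel relation from Proposition \ref{bin}), we get $\|P_{\lambda}^{(n)}(T)\|=\|L_n\|^2$, and similarly $\|UL\|=\|B\|^2$. Because $B_i\le B_{i+1}\le B$ pairwise commute, $\|B_i\|$ increases to $\|B\|$. For norm equality with every $n$ I would use that
\[
\|B_{i+1}\|=\|U^\ast B_iU\|\le\|B_i\|,
\]
so $\|B_i\|$ is constant, equal to $\|B\|$. Since $B_0\le L_n\le B$ all commute, $\|L_n\|$ lies between $\|B_0\|$ and $\|B\|$, hence equals $\|B\|$. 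Therefore $\|UL\|=\|B\|^2=\|L_n\|^2=\|P_{\lambda}^{(n)}(T)\|$ for all $n$, including $n=0$, where $\|T\|=\|B_0\|^2$.
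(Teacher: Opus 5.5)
Your proof is correct, but it takes a genuinely different route from the paper's.

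\textbf{The paper's route.} The paper never isolates the individual terms $B_i=(U^\ast)^i|T|^{1/2}U^i$. It shows that the iterates themselves increase, $|T|\le|P_\lambda^{(n)}(T)|\le|P_\lambda^{(n+1)}(T)|$, by applying Corollary~\ref{hy10}(ii) repeatedly. This uses Proposition~\ref{bin} for binormality of the iterates and Remark~\ref{rkbin}(ii) for persistence of semi-hyponormality. It bounds the iterates by $\|T\|$ via \eqref{normest2} and takes $L$ as the monotone strong limit. It then passes to the limit in the recursion \eqref{itera1} to get $U^\ast L^{1/2}U=L^{1/2}$, hence $UL=LU$. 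Quasinormality comes from checking $\mathcal{N}(U)=\mathcal{N}(L)$, so that $UL$ is a polar decomposition with commuting factors.

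\textbf{Your route.} You prove monotonicity one level down, for the $B_i$, directly from $|T|\ge|T^\ast|$, so you never need Corollary~\ref{hy10} or \eqref{normest2}. You then replace monotone convergence of the iterates by a Bernstein-averaging argument. This buys several things:
\begin{itemize}
\item You get an explicit, $\lambda$-independent description of the limit: $L=B^2$, where $B$ is the strong limit of $B_i$; equivalently, $L$ is the strong limit of $(U^\ast)^i|T|U^i$. So the limit $UL$ is the same for every $\lambda\in(0,1)$, which the paper's proof does not reveal.
\item The invariance $U^\ast BU=B$ is immediate from $B_{i+1}=U^\ast B_iU$.
\item Quasinormality follows straight from the definition using $UL=LU$ and $U^\ast UL=L$, with no kernel identification needed.
\item The norm statement drops out of $\|B_{i+1}\|\le\|B_i\|\le\|B_{i+1}\|$.
\end{itemize}
In exchange, the paper's route avoids the averaging step and reuses the semi-hyponormality-preservation result it had already established.

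A minor point: the commutativity you invoke when applying Loewner--Heinz, and when comparing norms of ordered positive operators, is not actually needed.
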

\begin{proof}
By \eqref{normest2}, Proposition \ref{bin} and Remark \ref{rkbin} (ii), we have
\begin{equation}\label{inseq}
	\|T\|\ge\Big\|P_{\lambda}^{(n+1)}(T)\Big\|=\Big\|\Big|P_{\lambda}^{(n+1)}(T)\Big|\Big\|\ge\Big|P_{\lambda}^{(n+1)}(T)\Big|\ge\Big|P_{\lambda}^{(n)}(T)\Big|,
\end{equation}
which implies that $\{|P_{\lambda}^{(n)}(T)|\}_{n\in\mathbb{N}}$ is a bounded increasing sequence, and hence converges in the strong operator topology to a positive operator $L\in\mathbb{B}(\mathcal{H})$ (\cite[Problem 120]{Halmos-GTM-1982}). From Proposition \ref{bin} we have $P_{\lambda}^{(n)}(T)=U|P_{\lambda}^{(n)}(T)|$; therefore $\lim\limits_{n\to\infty}P_{\lambda}^{(n)}(T)=UL$ in the strong operator topology. 

Since $UU^\ast|P_{\lambda}^{(n)}(T)|=|P_{\lambda}^{(n)}(T)|UU^\ast$ for $n\in\mathbb{N}$, we obtain 
\begin{equation}\label{commu20}
UU^\ast L=LUU^\ast.
\end{equation}From \eqref{itera1}, we get $\Big|P_{\lambda}^{(n+1)}(T)\Big|^\frac{1}{2}=(1-\lambda)|P_{\lambda}^{(n)}(T)|^\frac{1}{2}+\lambda U^\ast|P_{\lambda}^{(n)}(T)|^\frac{1}{2} U$, which yields that $L^\frac{1}{2}=(1-\lambda)L^\frac{1}{2}+\lambda U^\ast L^\frac{1}{2}U$; together with \eqref{commu20} and Lemma \ref{commu}, this implies $UL=LU$.

 Now, we need only prove $\mathcal{N}(U)=\mathcal{N}(L)$. Then $UL$ is the polar decomposition (see \cite[Chapter VIII, \S 3, the Polar Decomposition Theorem]{Conway-book-1990}) and consequently $UL$ is quasinormal. Indeed, from Proposition \ref{kernel} we have $\mathcal{N}(U)=\mathcal{N}(|P_{\lambda}^{(n)}(T)|)\subseteq\mathcal{N}(L)$ for $n\in\mathbb{N}$. By \eqref{inseq}, $L\ge\Big|P_{\lambda}^{(0)}T\Big|=|T|$, hence $\mathcal{N}(L)\subseteq\mathcal{N}(|T|)=\mathcal{N}(U)$. Therefore, $\mathcal{N}(U)=\mathcal{N}(L)$, as desired.
 
  Finally, from \eqref{inseq} we obtain $$\|T\|\ge\|UL\|=\|L\|\ge\|P_{\lambda}^{(n)}(T)\|\ge\cdots\ge\|P_{\lambda}^{(0)}(T)\|=\|T\|,$$ so that $\|UL\|=\|P_{\lambda}^{(n)}(T)\|$ for $n\in\mathbb{N}$. 
\end{proof}
\begin{remark}
Note that for invertible $T$, we have $P_{\lambda}(T)=\Delta_{\mathsf{m}_{f}}(T)$ with $f(x)=(\lambda+(1-\lambda)\sqrt{x})^2$ on $(0,\infty)$. Since the derivative $f'(1)=1-\lambda\in(0,1)$, the condition that $T$ is semi-hyponormal cannot be removed (see \cite[Theorem 5.2]{yamazaki-laa-2021}).
\end{remark}

Recall that a weighted unilateral shift with an increasing weight sequence is centered and semi‑hyponormal. The following example illustrates the above theorem.
\begin{example}
Let $\alpha\equiv\{\alpha_{n}\}_{n\in\mathbb{N}}$ be a bounded increasing sequence of positive numbers and $W_{\alpha}$ the unilateral weighted shift. Let $\lambda\in(0,1)$. Since $\sup\limits_{n\ge0}\alpha_{n}\operatorname{shift}(1,1,\cdots)$ is quasinormal, we only need to prove that
\begin{equation}\label{normcov}
	\lim\limits_{m\to\infty}P_{\lambda}^{(m)}(W_{\alpha})=\sup\limits_{n\ge0}\alpha_{n}\operatorname{shift}(1,1,\cdots)
\end{equation}
 in the norm topology and $\|P_{\lambda}^{(m)}(T)\|=\sup\limits_{n\ge0}\alpha_{n}$ for $m=0,1,2,\cdots$. Indeed, From \eqref{uni} we have$$P_{\lambda}(W_{\alpha})e_{n}=((1-\lambda)\sqrt{\alpha_{n}}+\lambda\sqrt{\alpha_{n+1}})^2e_{n+1},$$where $\{e_{n}\}_{n\in\mathbb{N}}$ is the standard orthonormal basis. It follows by induction on $m$ that 
 \begin{equation}\label{powerin}
 P_{\lambda}^{(m)}(W_{\alpha})e_{n}=\Bigg(\sum\limits_{i=0}^m\binom{m}{i}\lambda^i(1-\lambda)^{m-i}\sqrt{\alpha_{n+i}}\Bigg)^2e_{n+1}.
 \end{equation}
A direct computation gives that $$\|P_{\lambda}^{(m)}(W_{\alpha})\|=\sup\limits_{n\ge0}\Bigg(\sum\limits_{i=0}^m\binom{m}{i}\lambda^i(1-\lambda)^{m-i}\sqrt{\alpha_{n+i}}\Bigg)^2=\sup\limits_{n\ge0}(\sqrt{\alpha_{n}})^2(\lambda+1-\lambda)^{2m}=\sup\limits_{n\ge0}\alpha_{n}.$$
Let us now prove \eqref{normcov}. Since $\{\alpha_{n}\}$ is increasing,
\begin{align*}
0&\le\Big(\sum\limits_{i=0}^m\binom{m}{i}\lambda^i(1-\lambda)^{m-i}(\sup\limits_{n\ge0}\sqrt{\alpha_{n}}-\sqrt{\alpha_{n+i}})\Big)\Big(\sup\limits_{n\ge0}\sqrt{\alpha_{n}}+\sum\limits_{i=0}^m\binom{m}{i}\lambda^i(1-\lambda)^{m-i}\sqrt{\alpha_{n+i}}\Big)\\&=\Big(\sup\limits_{n\ge0}\sqrt{\alpha_{n}}-\sum\limits_{i=0}^m\binom{m}{i}\lambda^i(1-\lambda)^{m-i}\sqrt{\alpha_{n+i}}\Big)\Big(\sup\limits_{n\ge0}\sqrt{\alpha_{n}}+\sum\limits_{i=0}^m\binom{m}{i}\lambda^i(1-\lambda)^{m-i}\sqrt{\alpha_{n+i}}\Big)\\&=\sup\limits_{n\ge0}\alpha_{n}-\Bigg(\sum\limits_{i=0}^m\binom{m}{i}\lambda^i(1-\lambda)^{m-i}\sqrt{\alpha_{n+i}}\Bigg)^2.			
\end{align*}
Together with \eqref{powerin}, we obtain
\begin{small}
	\begin{equation}\label{co1}
	\begin{aligned}
		\Big\|P_{\lambda}^{(m)}(W_{\alpha})-\big(\sup\limits_{n\ge0}\alpha_{n}\big)\operatorname{shift}(1,1,\cdots)\Big\|&=\sup\limits_{n\ge0}\alpha_{n}-\Bigg(\sum\limits_{i=0}^m\binom{m}{i}\lambda^i(1-\lambda)^{m-i}\sqrt{\alpha_{i}}\Bigg)^2\\&\le2\sup\limits_{n\ge0}\sqrt{\alpha_{n}}\cdot\Big(\sup\limits_{n\ge0}\sqrt{\alpha_{n}}-\sum\limits_{i=0}^m\binom{m}{i}\lambda^i(1-\lambda)^{m-i}\sqrt{\alpha_{n+i}}\Big).
	\end{aligned}
	\end{equation}
\end{small}
For $1\le k\le m$,
\begin{small}
\begin{equation}\label{co2}
	0\le\sum\limits_{i=k}^m\binom{m}{i}\lambda^i(1-\lambda)^{m-i}(\sup\limits_{n\ge0}\sqrt{\alpha_{n}}-\sqrt{\alpha_{i}})\le(\sup\limits_{n\ge0}\sqrt{\alpha_{n}}-\sqrt{\alpha_{k}})\sum\limits_{i=k}^m\binom{m}{i}\lambda^i(1-\lambda)^{m-i}\le\sup\limits_{n\ge0}\sqrt{\alpha_{n}}-\sqrt{\alpha_{k}}
\end{equation}
\end{small}
and
\begin{small}
\begin{equation}\label{co3}
	0\le\sum\limits_{i=0}^{k-1}\binom{m}{i}\lambda^i(1-\lambda)^{m-i}(\sup\limits_{n\ge0}\sqrt{\alpha_{n}}-\sqrt{\alpha_{i}})\le(\sup\limits_{n\ge}\sqrt{\alpha_{n}}-\sqrt{\alpha_{0}})\max\{\lambda^m,(1-\lambda)^m\}k\cdot\frac{m!}{(m-k+1)!}.
\end{equation}
\end{small}
Hence, by \eqref{co1}, \eqref{co2} and \eqref{co3}, we have $$\lim\limits_{m\to\infty}\Big\|P_{\lambda}^{(m)}(W_{\alpha})-\big(\sup\limits_{n\ge0}\alpha_{n}\big)\operatorname{shift}(1,1,\cdots)\Big\|=0,$$ as desired. 
\end{example}

\section{Maps commuting with the power mean transform of product of matrices}

Chabbabi in \cite{chabbabi-jmaa-2017} described the form of all bijective maps $\Phi$ on $\mathbb{B}(\mathcal{H})$ that satisfy$$\Delta(\Phi(T)\Phi(S))=\Phi(\Delta(TS))\qquad\text{for all $T,S\in\mathbb{B}(\mathcal{H})$}.$$ Later, in \cite{chabbai-rtoa-2019}, they considered an analogous problem for the mean transform; more precisely, they characterized all bijections $\Phi$ on $\mathbb{B}(\mathcal{H})$ with$$M(\Phi(T)\Phi(S))=\Phi(M(TS))\qquad\text{for all $T,S\in\mathbb{B}(\mathcal{H})$}.$$In view of the above, one may wonder about the form of bijections $\Phi$ on $\mathbb{B}(\mathcal{H})$ such that $$\frac{1}{2}\Delta(\Phi(T)\Phi(S))+\frac{1}{2}M(\Phi(T)\Phi(S))=\Phi(\frac{1}{2}\Delta(TS)+\frac{1}{2}M(TS)),$$ equivalently,
\begin{equation}\label{zeroproduct}
P(\Phi(T)\Phi(S))=\Phi(P(TS))
\end{equation}
 for all $T,S\in\mathbb{B}(\mathcal{H})$. Thanks to Proposition \ref{kernel}, we can then use the recent ideas of Bourhim et al. \cite{bour-lma-2026} to answer this question in the matrix case. More generally, the following theorem is established. We write $T\sim_{u} S$ if there exists a unitary $U\in\mathbb{B}(\mathcal{H})$ such that $T=USU^\ast$. For $T=(t_{ij})\in\mathbb{M}_{n\times m}(\mathbb{C})$ (the set of all $n\times m$ matrices over $\mathbb{C}$), the complex conjugation of $T$ is denoted by $\overline{T}$.
 \begin{theorem}\label{mapmain}
 	Let $\lambda\in(0,1)$. For any bijective map $\Phi$ on $\mathbb{M}_{n}(\mathbb{C})$ with $n\ge2$, the following are equivalent:
 	\begin{itemize}
 		\item[(i)] $\Phi$ satisfies
 		\begin{equation}\label{mapcom1}
 			P_{\lambda}(\Phi(T)\Phi(S))=\Phi(P_{\lambda}(TS))
 		\end{equation}
 		for all $T,S\in\mathbb{M}_{n}(\mathbb{C})$.
 		
 		\item[(ii)] $\Phi$ satisfies
 		\begin{equation}\label{mapcom}
 			P_{\lambda}(\Phi(T)\Phi(S))\sim_{u}\Phi(P_{\lambda}(TS))
 		\end{equation}
 		for all $T,S\in\mathbb{M}_{n}(\mathbb{C})$.
 		\item[(iii)] There exist a unitary matrix $U\in\mathbb{M}_{n}(\mathbb{C})$ such that either \begin{equation*}
 			\Phi(T)=UTU^\ast
 		\end{equation*}
 		or
 		\begin{equation*}
 			\Phi(T)=U\overline{T}U^\ast
 		\end{equation*}
 		for all $T\in\mathbb{M}_{n}(\mathbb{C})$.
 	\end{itemize}
 \end{theorem}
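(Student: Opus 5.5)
The plan is to prove (iii) $\Rightarrow$ (i) $\Rightarrow$ (ii) $\Rightarrow$ (iii). The first two implications are routine. If $\Phi(T)=UTU^\ast$, then the polar decomposition of $UTSU^\ast$ is $(UVU^\ast)(U|TS|U^\ast)$, where $TS=V|TS|$. Since $P_\lambda$ is built from $T$, $T^D$ and $\Delta(T)$, which are all unitarily covariant, we get $P_\lambda(UXU^\ast)=UP_\lambda(X)U^\ast$. Similarly, the polar decomposition of $\overline{X}$ is $\overline{V}\,\overline{|X|}$, and continuous functional calculus commutes with entrywise conjugation, so $P_\lambda(\overline X)=\overline{P_\lambda(X)}$. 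Together with $\Phi(T)\Phi(S)=\Phi(TS)$ in both cases, this gives (i). The implication (i) $\Rightarrow$ (ii) is trivial.

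For (ii) $\Rightarrow$ (iii) I would follow Bourhim et al. \cite{bour-lma-2026}, using Proposition \ref{kernel} as the key input. By Proposition \ref{kernel}, for $\lambda\in(0,1)$ we have $P_\lambda(X)=0$ if and only if $X=0$, and $\operatorname{rank}P_\lambda(X)=\operatorname{rank}X$ in finite dimensions, since $\mathcal N(P_\lambda(X))=\mathcal N(X)$.

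\emph{Step 1: $\Phi(0)=0$.} Taking $T=S=0$ gives $P_\lambda(\Phi(0)^2)\sim_u\Phi(0)$. Let $S_0=\Phi^{-1}(0)$ and take $T=S_0$ with $S$ arbitrary. Then $0\sim_u P_\lambda(0\cdot\Phi(S))$, which must equal $\Phi(P_\lambda(S_0S))$ up to unitary similarity, so $\Phi(P_\lambda(S_0S))=0$. Hence $P_\lambda(S_0S)=S_0$ for all $S$. Taking $S=0$ forces $S_0=P_\lambda(0)=0$.

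\emph{Step 2: $\Phi$ preserves zero products in both directions.} We have $TS=0$ if and only if $P_\lambda(TS)=0$, if and only if $\Phi(P_\lambda(TS))=0$, if and only if $P_\lambda(\Phi(T)\Phi(S))=0$, if and only if $\Phi(T)\Phi(S)=0$. Here we use that unitary similarity to $0$ means equality with $0$, together with Step 1 and Proposition \ref{kernel}.

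\emph{Step 3: a bijection on $\mathbb M_n$ with $n\ge2$ preserving zero products in both directions has a rigid form.} Such maps respect left and right annihilators, so $\Phi$ induces lattice automorphisms of the subspaces of $\mathbb C^n$. By the fundamental theorem of projective geometry, one expects $\Phi(T)=c(T)\,A\,T^{\sigma}A^{-1}$ for some invertible $A$, a ring automorphism $\sigma$ of $\mathbb C$ applied entrywise, and a nonvanishing scalar function $c$. I would quote the corresponding lemma from \cite{bour-lma-2026} rather than reprove it.

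\emph{Step 4: pin down $c$, $\sigma$ and $A$.} Feed rank-one matrices and idempotents into \eqref{mapcom}. For a rank-one $x\otimes y$, formula \eqref{rankpower} gives $P_\lambda(x\otimes y)$ explicitly. Comparing traces, norms and spectra of both sides of \eqref{mapcom}, which are unitary invariants, should force $c\equiv1$ and $\sigma$ to be continuous, so $\sigma=\mathrm{id}$ or complex conjugation. It should also force $A$ to map orthogonal vectors to orthogonal vectors, so that $A$ is a scalar multiple of a unitary. The data for the last point is that $P_\lambda(x\otimes y)$ depends on $\langle x,y\rangle$ through the term $\frac{\langle x,y\rangle}{\|y\|^2}y$. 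For instance, normal rank-one elements ($x\parallel y$) are fixed by $P_\lambda$, while non-normal ones are moved, and unitary similarity must respect this distinction.

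I expect Step 4 to be the main obstacle. The orthogonality constraint on $A$ has to be extracted carefully from \eqref{rankpower} applied to $AxA^{-1}$-type images, using only that both sides agree up to unitary similarity. The first thing I would try is a pair $x,y$ with $\langle x,y\rangle=0$: then $P_\lambda(x\otimes y)=(1-\lambda)^2x\otimes y$ is nilpotent, and preservation of this behaviour under $\Phi$ should force $\langle Ax,(A^{-1})^\ast y\rangle$-type orthogonality relations that characterize $A$ as a scalar times a unitary.
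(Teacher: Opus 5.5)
Your Steps 1 and 2 match the paper's Claims 1 and 2. Step 3, however, asserts more than is true, and as a result your plan contains no argument for matrices of rank $\ge 2$.

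A bijection preserving zero products in both directions need not have the form $\Phi(T)=c(T)AT^{\sigma}A^{-1}$ on all of $\mathbb{M}_n(\mathbb{C})$. If $T$ (resp.\ $S$) is invertible, then $TS=0$ forces $S=0$ (resp.\ $T=0$). Hence the map equal to the identity on singular matrices and to an arbitrary permutation of the invertible ones preserves zero products both ways. It may send $I$ to a non-scalar matrix, whereas your form forces $\Phi(I)=c(I)I$.

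What the zero-product structure actually yields, and what \cite[Theorem 3.6]{bour-lma-2026} provides, is $\Phi(R)=\alpha(R)AR_\tau A^{-1}$ for rank-one $R$ only. (I write $\tau$ for your automorphism $\sigma$, since $\sigma(\cdot)$ denotes the spectrum.) So even after Step 4 you would only know that $\Phi(R)=R$ for rank-one $R$, after replacing $\Phi$ by $U^\ast\Phi(\cdot)U$ or its conjugate. A separate step is still needed. The paper's Claim 4 supplies it by returning to (ii) with $S=x\otimes x$, which gives $P_\lambda\bigl((\Phi(T)x)\otimes x\bigr)\sim_u P_\lambda\bigl((Tx)\otimes x\bigr)$. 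By \eqref{rankpower}, $\sigma(P_\lambda(u\otimes x))=\{0,\langle u,x\rangle\}$. Comparing spectra gives $\langle \Phi(T)x,x\rangle=\langle Tx,x\rangle$ for all $x$, hence $\Phi(T)=T$.

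Step 4 also needs a different mechanism from the one you plan to try first. For $\langle x,y\rangle=0$, the image $\Phi(x\otimes y)=\alpha(x\otimes y)\,(Ax_\tau)\otimes\bigl((A^\ast)^{-1}\overline{\{\overline{y}\}_\tau}\bigr)$ has pairing $\tau(\langle x,y\rangle)=0$ for every invertible $A$. So it is automatically nilpotent with $P_\lambda$ a multiple of itself, and this places no constraint on $A$.

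Your other observation is the useful one. Among rank-one matrices, $P_\lambda(N)\sim_u N$ holds exactly when the two vectors are linearly dependent; this is Lemma \ref{form7}(v), proved by comparing norms in \eqref{rankpower}. The paper feeds it in through $R=x\otimes x$. Put $N=AR_\tau A^{-1}$. Then $P_\lambda(\Phi(R)^2)=\alpha(R)^2\tau(\|x\|^2)P_\lambda(N)$ and $\Phi(P_\lambda(R^2))=\alpha(\|x\|^2R)\tau(\|x\|^2)N$. Since $\sigma(P_\lambda(N))=\sigma(N)=\{0,\tau(\|x\|^2)\}$ with $\tau(\|x\|^2)\neq0$, comparing spectra equates the coefficients and gives $P_\lambda(N)\sim_u N$. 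Hence $Ax_\tau$ and $(A^\ast)^{-1}\overline{\{\overline{x}\}_\tau}$ are linearly dependent for every $x\neq0$. Lemma \ref{auttau} then gives at once that $A$ is a multiple of a unitary and that $\tau$ is the identity or conjugation, with no separate continuity argument needed.

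Similarly, $c\equiv1$ on rank-one matrices is obtained in two stages. First show that $c(x\otimes y)$ does not depend on $y$ (Lemma \ref{auteq}, using $(x\otimes y)C=(x\otimes z)C=x\otimes x$ for a suitable $C$). Then show $c(P)=1$ for rank-one projections $P$.
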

 \begin{remark}
 This theorem is false in one dimension. For instance, $\Phi(z)=\begin{cases}
 	\frac{1}{z}&z\ne0\\
 	0&z=0
 \end{cases}$ 
 is bijective, satisfies \eqref{mapcom1}, but is not additive.
 \end{remark}
 To prove Theorem \ref{mapmain}, some lemmas are first presented.

\begin{lemma}\label{form7}
	Let $\lambda\in(0,1)$ and $T\in\mathbb{B}(\mathcal{H})$. Then
	\begin{itemize}
	\item[(i)] $P_{\lambda}(cT)=cP_{\lambda}(T)$ for all $c\in\mathbb{C}$.
	\item[(ii)] 
	$P_{\lambda}(UTU^\ast)=UP_{\lambda}(T)U^\ast$ for every unitary operator $U\in\mathbb{B}(\mathcal{H})$.
	\item[(iii)] 
	$P_{\lambda}(\overline{T})=\overline{P_{\lambda}(T)}$ if $\mathbb{B}(\mathcal{H})=\mathbb{M}_{n}(\mathbb{C})$.
	\item[(iv)]
	For any nonzero vectors $x,y\in\mathcal{H}$ with $\dim\mathcal{H}\ge2$, we have $$\sigma(P_{\lambda}(x\otimes y))=\sigma(x\otimes y)=\{0,\langle x,y\rangle\}.$$ In particular, $P_{\lambda}(x\otimes x)=x\otimes x$ and $\sigma(P_{\lambda}(x\otimes x))=\{0,\|x\|^2\}$.
	\item[(v)] For nonzero vectors $x,y\in\mathcal{H}$, $x$ and $y$ are linearly dependent if and only if $P_{\lambda}(x\otimes y)\sim_{u}x\otimes y$.
	\end{itemize}
\end{lemma}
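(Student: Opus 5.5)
For (i)--(iii) I will argue directly from the polar decomposition and the definition $P_{\lambda}(T)=(1-\lambda)^2T+\lambda^2T^D+2\lambda(1-\lambda)\Delta(T)$.

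\emph{Part (i).} Write $c=e^{i\theta}|c|$. If $c=0$ both sides vanish. Otherwise $cT=(e^{i\theta}U)(|c||T|)$ is the polar decomposition of $cT$, since $e^{i\theta}U$ is a partial isometry with the same kernel. Hence $(cT)^D=|c||T|e^{i\theta}U=cT^D$ and $\Delta(cT)=|c|^{1/2}|T|^{1/2}e^{i\theta}U|c|^{1/2}|T|^{1/2}=c\Delta(T)$, and (i) follows.

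\emph{Part (ii).} $UTU^\ast=(UVU^\ast)(U|T|U^\ast)$ is the polar decomposition of $UTU^\ast$ when $T=V|T|$. Moreover $(U|T|U^\ast)^{1/2}=U|T|^{1/2}U^\ast$, and each of the three summands is conjugated by $U$, which gives (ii).

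\emph{Part (iii).} If $T=V|T|$, then $\overline{T}=\overline{V}\,\overline{|T|}$. Here $\overline{|T|}$ is positive with $\overline{|T|}^2=\overline{T^\ast T}=\overline{T}^\ast\overline{T}$, and $\overline{V}$ is a partial isometry with $\mathcal{N}(\overline{V})=\overline{\mathcal{N}(V)}=\mathcal{N}(\overline{|T|})$. By uniqueness this is the polar decomposition of $\overline{T}$. Entrywise conjugation is multiplicative and commutes with the square root, since $\overline{|T|^{1/2}}$ is a positive square root of $\overline{|T|}$. So (iii) follows.

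\emph{Part (iv).} I will use formula \eqref{rankpower}:
$$P_{\lambda}(x\otimes y)=z\otimes y,\qquad z=(1-\lambda)^2x+(1-(1-\lambda)^2)\frac{\langle x,y\rangle}{\|y\|^2}y.$$
The spectrum of the rank-at-most-one operator $z\otimes y$ is $\{0,\langle z,y\rangle\}$ when $\dim\mathcal{H}\ge2$, because the kernel of $\cdot\otimes y$ is nontrivial. A direct computation gives $\langle z,y\rangle=(1-\lambda)^2\langle x,y\rangle+(1-(1-\lambda)^2)\langle x,y\rangle=\langle x,y\rangle$, so $\sigma(P_{\lambda}(x\otimes y))=\{0,\langle x,y\rangle\}=\sigma(x\otimes y)$. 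For $x=y$ we get $z=x$, so $P_{\lambda}(x\otimes x)=x\otimes x$.

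\emph{Part (v), forward direction.} If $x=\alpha y$, then $z=(1-\lambda)^2\alpha y+(1-(1-\lambda)^2)\alpha y=x$, so $P_{\lambda}(x\otimes y)=x\otimes y$ and the two operators are trivially unitarily similar.

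\emph{Part (v), converse.} Unitary similarity preserves the norm, and $\|z\otimes y\|=\|z\|\|y\|$ while $\|x\otimes y\|=\|x\|\|y\|$, so $\|z\|=\|x\|$. Decompose $x=\alpha y+w$ with $w\perp y$. Then $z=\alpha y+(1-\lambda)^2w$, and
$$\|z\|^2=|\alpha|^2\|y\|^2+(1-\lambda)^4\|w\|^2,\qquad \|x\|^2=|\alpha|^2\|y\|^2+\|w\|^2.$$
Equality of these norms forces $(1-(1-\lambda)^4)\|w\|^2=0$. Since $\lambda\in(0,1)$, we get $w=0$, so $x$ and $y$ are linearly dependent. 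This converse is the only step needing a genuine idea, and the norm invariant suffices. If one prefers, the Frobenius norm or the trace of $A^\ast A$ would serve equally well.
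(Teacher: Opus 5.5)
Your proposal is correct and follows essentially the same route as the paper: (i)--(iii) come from how the polar decomposition transforms under scalars, unitary conjugation and entrywise conjugation, (iv) from \eqref{rankpower} together with $\langle z,y\rangle=\langle x,y\rangle$, and (v) from unitary invariance of the norm, which gives $\|z\|=\|x\|$. The differences are cosmetic: you verify the polar decomposition of $UTU^\ast$ directly instead of citing it, and you finish (v) with the orthogonal decomposition $x=\alpha y+w$ rather than the paper's triangle-inequality and Cauchy--Schwarz equality argument.
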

\begin{proof}
Let $T=V|T|$ be the polar decomposition of $T$.	
	
(i) One can easily obtain the polar decomposition $cT=e^{i\theta}U|cT|$ with $c=|c|e^{i\theta}$. Then (i) follows from the definition of the power mean transform.
	
(ii) By \cite[Lemma 4.13]{zhou-glma-2023}, we have the polar decomposition $UTU^\ast=(UVU^\ast)(U|T|U^\ast)$ with partial isometry $UVU^\ast$ and $\Delta(UTU^\ast)=U\Delta(T)U^\ast$. Then (ii) follows from the definition of the power mean transform.
	
(iii) It is easy to verify that $\overline{T}=\overline{V}\,\overline{|T|}$ is the polar decomposition of $\overline{T}$, which immediately yields (iii).
	
(iv) $\sigma(x\otimes y)=\{0,\langle x,y\rangle\}$ is well known. From \eqref{rankpower} we obtain $$P_{\lambda}(x\otimes y)=((1-\lambda)^2x+(1-(1-\lambda)^2)\frac{\langle x,y\rangle}{\|y\|^2}y)\otimes y.$$ $\sigma(P_{\lambda}(x\otimes y))=\{0,\langle x,y\rangle\}$ holds due to the above equality and$$\langle((1-\lambda)^2x+(1-(1-\lambda)^2)\frac{\langle x,y\rangle}{\|y\|^2}y),y\rangle=\langle x,y\rangle.$$
	
(v) 
Suppose $P_{\lambda}(x\otimes y)\sim_{u}x\otimes y$. Then from the unitary invariance of the norm and \eqref{normrankone} we have
$$\|(1-\lambda)^2x+(1-(1-\lambda)^2)\frac{\langle x,y\rangle}{\|y\|^2}y\|\|y\|=\|x\|\|y\|,$$
hence $$\|(1-\lambda)^2x+(1-(1-\lambda)^2)\frac{\langle x,y\rangle}{\|y\|^2}y\|=\|x\|.$$ Applying the triangle inequality together with the Cauchy–Schwarz inequality gives$$\|x\|\le(1-\lambda)^2\|x\|+(1-(1-\lambda)^2)\Big\|\frac{\langle x,y\rangle}{\|y\|}\Big\|\le\|x\|.$$
Thus, $|\langle x,y\rangle|=\|x\|\|y\|$, i.e., $x$ and $y$ are linearly dependent. The converse follows directly from (i) and (iv).
\end{proof}

Let $\tau$ be an automorphism of $\mathbb{C}$. For $T=(t_{ij})\in\mathbb{M}_{n\times m}(\mathbb{C})$ and $x=\begin{pmatrix}
	x_{1}\\
	\vdots\\
	x_{n}
\end{pmatrix}\in\mathbb{C}^n$, define $T_{\tau}=(\tau(t_{ij}))$ and $x_{\tau}=\begin{pmatrix}
	\tau(x_{1})\\
	\vdots\\
	\tau(x_{n})
\end{pmatrix}$. Take nonzero vectors $x,y\in\mathbb{C}^n$. The rank-one operator $x\otimes y$ has the matrix representation \(\begin{bmatrix} x_{1}\overline{y_{1}} & \cdots&x_{1}\overline{y_{n}} \\ \vdots & \ddots&\cdots\\x_{n}\overline{y_{1}}&\cdots&x_{n}\overline{y_{n}} \end{bmatrix}\). Using this matrix representation, it is easy to see that 
\begin{equation}\label{rankoneaut}
	(x\otimes y)_{\tau}=x_{\tau}\otimes\overline{\{{\overline{y}\}}_{\tau}}.
\end{equation}
The following two lemmas are essentially due to Bourhim et al. \cite{bour-lma-2026}. For the sake of compatibility, we prove the second lemma below.

\begin{lemma}\cite[p. 81]{bour-lma-2026}\label{auttau}
Let $A$ be an invertible matrix in $\mathbb{M}_{n}(\mathbb{C})$. If $Ax_{\tau}$ and $A^{\ast -1}(\overline{\{\overline{x}\}_{\tau}})$ are linearly dependent for every nonzero $x\in\mathbb{C}^n$, then $A$ is a positive scalar multiple of a unitary matrix $U$ and $\tau$ is either the identity map or the conjugation map on $\mathbb{C}$.
\end{lemma}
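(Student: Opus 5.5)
The plan is to reduce the hypothesis to a statement about the single positive definite matrix $C=A^\ast A$, and then test it on a few well-chosen vectors. Put $\sigma(z)=\overline{\tau(\overline{z})}$ for $z\in\mathbb{C}$. Then $\sigma$ is again an automorphism of $\mathbb{C}$, and by definition $\overline{\{\overline{x}\}_{\tau}}=x_{\sigma}$ for every $x\in\mathbb{C}^n$. The hypothesis therefore says that $Ax_{\tau}$ and $A^{\ast-1}x_{\sigma}$ are linearly dependent. Both vectors are nonzero, since $x\neq0$ implies $x_\tau,x_\sigma\neq0$ and $A$ is invertible. Applying the invertible matrix $A^\ast$ gives: for every nonzero $x$ there is a scalar $c(x)\neq0$ with
$$Cx_{\tau}=c(x)\,x_{\sigma}.$$
Throughout I use $n\ge2$, as in Theorem \ref{mapmain}.

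\emph{Step 1: $C$ is a positive scalar.} Take $x=e_i$. Since $\tau(1)=\sigma(1)=1$ and $\tau(0)=\sigma(0)=0$, we have $x_\tau=x_\sigma=e_i$, so $Ce_i=d_ie_i$. Hence $C$ is diagonal, and its diagonal entries $d_i$ are positive because $C$ is positive definite. Next take $x=e_i+e_j$ with $i\neq j$. Again $x_\tau=x_\sigma=e_i+e_j$, so $d_ie_i+d_je_j$ must be a multiple of $e_i+e_j$, which forces $d_i=d_j$. Thus $A^\ast A=dI$ with $d>0$. Consequently $U:=d^{-1/2}A$ is unitary, and $A=\sqrt{d}\,U$ is a positive scalar multiple of a unitary.

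\emph{Step 2: $\tau$ commutes with conjugation.} With $C=dI$, the relation becomes: $x_\tau$ and $x_\sigma$ are linearly dependent for every nonzero $x$. Take $x=e_1+ze_2$ with $z\in\mathbb{C}$. Then $(1,\tau(z))$ and $(1,\sigma(z))$ must be parallel, and since both have first coordinate $1$ this gives $\tau(z)=\sigma(z)=\overline{\tau(\overline{z})}$. In other words, $\tau(\overline{w})=\overline{\tau(w)}$ for all $w$.

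\emph{Step 3: identify $\tau$.} I expect this to be the only non-mechanical step. By Step 2, $\tau$ maps $\mathbb{R}$ into $\mathbb{R}$, and the same argument applied to $\tau^{-1}$ (which also commutes with conjugation) shows that $\tau$ restricts to a field automorphism of $\mathbb{R}$. Such an automorphism preserves squares, hence nonnegativity, hence order. It fixes $\mathbb{Q}$, so by density of $\mathbb{Q}$ and monotonicity it is the identity on $\mathbb{R}$. Finally $\tau(i)^2=\tau(-1)=-1$, so $\tau(i)=\pm i$. By $\mathbb{R}$-linearity, $\tau$ is therefore either the identity map or the conjugation map on $\mathbb{C}$, which completes the plan.
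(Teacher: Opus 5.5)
Your proof is correct and complete. The substitution $\sigma(z)=\overline{\tau(\overline z)}$ turns the hypothesis into $A^\ast A\,x_\tau=c(x)\,x_\sigma$. The test vectors $e_i$, $e_i+e_j$ and $e_1+ze_2$ then force $A^\ast A=dI$ and $\tau(\overline w)=\overline{\tau(w)}$. Your final argument is also sound: such an automorphism preserves $\mathbb{R}$, is order-preserving there, hence fixes $\mathbb{R}$ pointwise, and sends $i$ to $\pm i$.

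The paper gives no argument of its own for this lemma; it only cites Bourhim et al. Your proposal therefore supplies a self-contained proof rather than an alternative to one in the text.

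Your explicit assumption $n\ge 2$ is genuinely needed, although the lemma as stated omits it. For $n=1$ the linear-dependence hypothesis is automatically satisfied, so any discontinuous automorphism $\tau$ of $\mathbb{C}$ would violate the conclusion. This matches the standing assumption $n\ge 2$ in Theorem~\ref{mapmain}, which is where the lemma is used.
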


Denote by $\mathcal{F}_{1}(\mathbb{C}^n)$ the set of matrices  of rank one in $\mathbb{M}_{n}(\mathbb{C})$. 

\begin{lemma}\label{auteq}
Let $\lambda\in(0,1)$ and let $\alpha:\mathcal{F}_{1}(\mathbb{C}^n)\to\mathbb{C}\backslash\{0\}$ be a function satisfying
\begin{equation}\label{form3}
\alpha(R)\alpha(S)P_{\lambda}(RS)\sim_{u}\alpha(P_{\lambda}(RS))P_{\lambda}(RS)
\end{equation}
 for all $R,S\in\mathcal{F}_{1}(\mathbb{C}^n)$. Then $\alpha(x\otimes y)=\alpha(x\otimes z)$ for all nonzero vectors $x,y,z\in\mathbb{C}^n$.
\end{lemma}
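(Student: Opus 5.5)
We plan to choose $R$ and $S$ so that the product $RS$ is a nonzero multiple of a fixed rank-one operator of the form $x\otimes x$ or $x\otimes y$. For such products $P_{\lambda}$ acts explicitly by \eqref{rankpower}. We then use unitary similarity invariants, namely the spectrum and the norm, to compare the scalars $\alpha$ appearing on both sides of \eqref{form3}.

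Fix nonzero $x,y,z\in\mathbb{C}^n$ and recall the convention $(x\otimes y)(w\otimes v)=\langle w,y\rangle\, x\otimes v$. The first step is to pick a vector $w$ with $\langle w,y\rangle\neq0$ and $\langle w,z\rangle\neq0$; this is possible since the union of two hyperplanes is not all of $\mathbb{C}^n$. Put $S=w\otimes x$. Then
\[
(x\otimes y)S=\langle w,y\rangle\, x\otimes x, \qquad (x\otimes z)S=\langle w,z\rangle\, x\otimes x .
\]
By Lemma \ref{form7} (i) and (iv), $P_{\lambda}(c\, x\otimes x)=c\, x\otimes x$.

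Applying \eqref{form3} with $R=x\otimes y$ gives
\[
\alpha(x\otimes y)\alpha(S)\langle w,y\rangle\, x\otimes x\sim_{u}\alpha\bigl(\langle w,y\rangle x\otimes x\bigr)\langle w,y\rangle\, x\otimes x .
\]
Comparing the nonzero eigenvalues (Lemma \ref{form7} (iv)) yields
\[
\alpha(x\otimes y)\alpha(S)=\alpha\bigl(\langle w,y\rangle\, x\otimes x\bigr).
\]
The same argument with $R=x\otimes z$ gives
\[
\alpha(x\otimes z)\alpha(S)=\alpha\bigl(\langle w,z\rangle\, x\otimes x\bigr).
\]

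The main obstacle is that these right-hand sides are $\alpha$ evaluated at different scalar multiples of $x\otimes x$. So $\alpha(x\otimes y)$ and $\alpha(x\otimes z)$ agree only if $\alpha(c\,x\otimes x)$ does not depend on $c$. The first remedy we would try is to rescale $w$ so that $\langle w,y\rangle=\langle w,z\rangle$. This is possible exactly when some $w$ satisfies $\langle w,y-z\rangle=0$ and $\langle w,y\rangle\ne0$, which holds unless $y$ is a multiple of $y-z$. That exceptional situation includes the case $z=cy$ and the case of generic collinear $y,z$; there we handle the pair through an intermediate vector. Since $n\ge2$, we can choose $u$ not collinear with $y$, with $u\ne z$ and $u\ne y$, and chosen generically so that neither $y$ nor $u$ is a multiple of $y-u$, and neither $z$ nor $u$ is a multiple of $z-u$. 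We then chain the equalities $y\to u\to z$. Whenever $\langle w,y\rangle=\langle w,z\rangle=:c\neq0$, the two displays share the same right-hand side, and dividing by $\alpha(S)\neq0$ gives $\alpha(x\otimes y)=\alpha(x\otimes z)$.

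If some degenerate configuration still remains, for instance one forced by the $y=z$ multiple structure, we would add a second, norm-based comparison. We would apply \eqref{form3} with $R=x\otimes y$ and $S=y\otimes v$ for suitable $v$, and use Lemma \ref{form7} (v) together with the norm equality under $\sim_{u}$. This shows $|\alpha(cx\otimes x)|$ is determined, and combining it with the eigenvalue identity pins down $\alpha$ itself. The chaining through a generic intermediate vector is the step we expect to require the most care.
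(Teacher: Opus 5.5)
Your argument is correct and is essentially the paper's proof. The paper also takes $S=\widetilde{x}\otimes x$ with $\langle y,\widetilde{x}\rangle=\langle z,\widetilde{x}\rangle=1$ when $y,z$ are linearly independent (a case contained in your criterion that $y$ is not a multiple of $y-z$), compares the nonzero eigenvalues of the two unitarily similar multiples of $x\otimes x$, and handles collinear $y,z$ by passing through a vector independent of $y$. Your final norm-based contingency is unnecessary: the only exceptional case is $z=\mu y$ with $\mu\neq1$, and any $u$ linearly independent of $y$ makes both pairs $(y,u)$ and $(u,z)$ non-exceptional, so the chaining already covers every case.
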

\begin{proof}
{\bf Case 1}: Suppose $y$ and $z$ are linearly independent. Then there exist $a,b\in\mathbb{C}$ such that $\widetilde{x}=ay+bz$ satisfies $\langle y,\widetilde{x}\rangle=\langle z,\widetilde{x}\rangle=1$. Let $A=x\otimes y$, $B=x\otimes z$ and $C=\widetilde{x}\otimes x$. Since $AC=BC=x\otimes x$, by Lemma \ref{form7} (iv) we may substitute $R=A,S=C$ and $R=B,S=C$ into \eqref{form3} to obtain
\begin{equation}\label{form5}
\alpha(A)\alpha(C)(x\otimes x)\sim_{u}\alpha(x\otimes x)(x\otimes x),
\end{equation}
\begin{equation}\label{form6}
\alpha(B)\alpha(C)(x\otimes x)\sim_{u}\alpha(x\otimes x)(x\otimes x).
\end{equation}
Using Lemma \eqref{form7} (iv) and the unitary invariance of the spectrum together with \eqref{form5} and \eqref{form6}, we deduce $$\alpha(A)\alpha(C)=\alpha(x\otimes x)=\alpha(B)\alpha(C).$$
Thus, $\alpha(x\otimes y)=\alpha(A)=\alpha(B)=\alpha(x\otimes z)$ for linearly independent $y,z$.

{\bf Case 2}: Suppose $y$ and $z$ are linearly dependent. Since $\dim(\mathbb{C}^n)\ge2$, there exists a vector $\widetilde{z}$ such that $y$ and $\widetilde{z}$ are linearly independent; then $z$ and $\widetilde{z}$ are also linearly independent. Applying Case 1 we obtain obtain $\alpha(x\otimes y)=\alpha(x\otimes\widetilde{z})=\alpha(x\otimes z)=\alpha(x\otimes z)$.

Hence, $\alpha(x\otimes y)=\alpha(x\otimes z)$ holds for all nonzero vectors $x,y,z\in\mathbb{C}^n$.
\end{proof}

Now we prove Theorem \ref{mapmain}.

\begin{proof}[Proof of Theorem \ref{mapmain}]
(i) $\Rightarrow$ (ii) is obvious. (iii) $\Rightarrow$ (i) is immediate from Lemma \ref{form7} (ii) and (iii). Thus, we only need to prove (ii) $\Rightarrow$ (iii). Now, suppose (ii) holds. Then\\~

{\bf Claim 1}: $\Phi(0)=0$\\~

Take $A\in\mathbb{M}_{n}(\mathbb{C})$ such that $\Phi(A)=0$. For any $S$, by \eqref{mapcom} we have $\Phi(P_{\lambda}(AS))=0$ for all $S\in\mathbb{M}_{n}(\mathbb{C})$. By the injectivity of $\Phi$, we obtain $P_{\lambda}(AS)=A$. Taking $S=0$ gives $P_{\lambda}(0)=A$, so $A=0$. Thus $\Phi(0)=0$.\\~

{\bf Claim 2}: There exist a function $\alpha:\mathcal{F}_{1}(\mathbb{C}^n)\to\mathbb{C}\backslash\{0\}$ and a unitary matrix $U\in\mathbb{M}_{n}(\mathbb{C})$ such that either
\begin{equation*}
	\Phi(R)=\alpha(R)URU^\ast
\end{equation*}
or
\begin{equation*}
	\Phi(R)=\alpha(R)U\overline{R}U^\ast
\end{equation*}
for all rank-one matrices $R\in\mathbb{M}_{n}(\mathbb{C})$.\\~

Proposition \ref{kernel} and Claim 1 readily imply that $\Phi$ preserves zero products, i.e., $\Phi(T)\Phi(S)=0\Leftrightarrow TS=0$ for all $T,S\in\mathbb{M}_{n}(\mathbb{C})$. By \cite[Theorem 3.6]{bour-lma-2026} (see also \cite[Lemma 2.2 (3)]{hou-laa-2008}), there exist a function $\alpha:\mathcal{F}_{1}(\mathbb{C}^n)\to\mathbb{C}\backslash\{0\}$, an invertible matrix $A\in\mathbb{M}_{n}(\mathbb{C})$ and a field automorphism $\tau:\mathbb{C}\to\mathbb{C}$ such that\begin{equation}\label{automap}
\Phi(R)=\alpha(R)AR_{\tau}A^{-1}
\end{equation} for all rank-one matrices $R\in\mathbb{M}_{n}(\mathbb{C})$. 
Take a nonzero $x\in\mathbb{C}^n$ and set $R=x\otimes x$. Then from \eqref{automap}, \eqref{rankoneaut} and Lemma \ref{form7} we obtain
\begin{align*}
\alpha(R)^2\tau(\|x\|^2)P_{\lambda}(AR_{\tau}A^{-1})=P_{\lambda}(\alpha(R)^2AR_{\tau}^2A^{-1})&=P_{\lambda}(\Phi(R)^2)\\&\sim_{u}\Phi(P_{\lambda}(R^2))\\&=\Phi(\|x\|^2P_{\lambda}(R))\\&=\alpha(\|x\|^2R)\tau(\|x\|^2)AR_{\tau}A^{-1}.
\end{align*}
Taking the spectrum on both sides and using the fact that $AR_{\tau}A^{-1}=(Ax_{\tau})\otimes (A^{\ast -1}(\overline{\{\overline{x}\}_{\tau}}))$ with $\langle Ax_{\tau},A^{\ast -1}(\overline{\{\overline{x}\}}_{\tau})\rangle=\langle x_{\tau},\overline{\{\overline{x}\}_{\tau}}\rangle=\sum\limits_{i=1}^n|x_{i}|^2\tau(1)=\|x\|^2$, we deduce from Lemma \ref{form7} (iv) and \eqref{mapcom} that $$\alpha(R)^2\tau(\|x\|^2)=\alpha(\|x\|^2R)\tau(\|x\|^2).$$
Consequently, $P_{\lambda}(AR_{\tau}A^{-1})\sim_{u} AR_{\tau}A^{-1}=(Ax_{\tau})\otimes (A^{\ast -1}(\overline{\{\overline{x}\}_{\tau}}))$. From Lemma \ref{form7} (v) we obtain that $Ax_{\tau}$ and $A^{\ast -1}(\overline{\{\overline{x}\}_{\tau}})$ are linearly dependent. By Lemma \ref{auttau}, $A$ is a positive scalar multiple of a unitary matrix $U$ and $\tau$ is either the identity map or the complex conjugation on $\mathbb{C}$. Together with \eqref{automap}, the claim follows.\\~ 
 
 Define a mapping  $\overline{\Phi}$ by $\overline{\Phi}(T)=\overline{\Phi(T)}$ for all $T\in\mathbb{M}_{n}(\mathbb{C})$. Then by Lemma \ref{form7}, $\overline{\Phi}$ and $U^\ast\Phi(\cdot)U$ also satisfy equation \eqref{mapcom}. Together with Claim 2, we may assume that 
 \begin{equation}\label{basform}
 \Phi(R)=\alpha(R)R
 \end{equation}
  holds for every rank‑one matrix $R\in\mathbb{M}_{n}(\mathbb{C})$. It then remains to show that $\Phi(T)=T$ for all $T\in\mathbb{M}_{n}(\mathbb{C})$.\\~
 
 {\bf Claim 3}: $\alpha(R)=1$ for all rank-one matrices $R\in\mathbb{M}_{n}(\mathbb{C})$.\\~
 
 Take any two rank‑one matrices $R,S\in\mathbb{M}_{n}(\mathbb{C})$. Then from \eqref{basform}, \eqref{mapcom} and Lemma \ref{form7},
 \begin{equation}\label{rsequ}
 \alpha(R)\alpha(S)P_{\lambda}(RS)=P_{\lambda}(\Phi(R)\Phi(S))\sim_{u}\Phi(P_{\lambda}(RS))=\alpha(P_{\lambda}(RS))P_{\lambda}(RS).
 \end{equation}
 Hence, by Lemma \ref{auteq}, we obtain
 \begin{equation}\label{rsequ2}
 \alpha(x\otimes y)=\alpha(x\otimes z)
 \end{equation}
 for all nonzero vectors $x,y,z\in\mathbb{C}^n$. Moreover, set $R=S=P$ in \eqref{rsequ}, where $P$ is a rank‑one projection. Since $P_{\lambda}(P)=P$, we obtain $\alpha(P)^2P\sim_{u}\alpha(P)P$. Taking the spectrum on both sides, yields $\alpha(P)=1$. Now, set $z=\frac{x}{\|x\|^2}$ in \eqref{rsequ2}. Because $x\otimes\frac{x}{\|x\|^2}$ is a rank-one projection, we get $\alpha(x\otimes y)=1$ for all nonzero vectors $x,y\in\mathbb{C}^n$. The claim follows.\\~
 
 {\bf Claim 4}: $\Phi(T)=T$ for all $T\in \mathbb{M}_{n}(\mathbb{C})$.\\~
 
 From Claim 3 and \eqref{basform}, we obtain $\Phi(R)=R$ for all rank-one matrices $R$. Take any unit vector $x\in\mathbb{C}^n$ and any matrix $T\in\mathbb{M}_{n}(\mathbb{C})$. Then by Lemma \ref{form7} (iv) we have
 $$P_{\lambda}(\Phi(T)x\otimes x)=P_{\lambda}(\Phi(T)\Phi(x\otimes x))\sim_{u}\Phi(P_{\lambda}(Tx\otimes x))=P_{\lambda}(Tx\otimes x).$$
 Taking the spectrum on both sides and using Lemma \ref{form7} (iv), we obtain
 $\langle\Phi(T)x,x\rangle=\langle Tx,x\rangle$, which implies $\Phi(T)=T$.\\~
 
 The proof is complete.
 
\section*{Author Contributions} 
All authors contributed equally towards the paper.
 
\section*{Conflicts of Interest}
The authors declare no conflicts of interest.

\section*{Data Availability Statement}
Data sharing not applicable to this article as no datasets were generated or analysed during the current study.

\end{proof}

	\bibliographystyle{amsplain}
	
\end{document}